\DeclareMathOperator{\area}{area}
\DeclareMathOperator{\Cl}{Cl}
\DeclareMathOperator{\covol}{covol}
\DeclareMathOperator{\disc}{disc}
\DeclareMathOperator{\opdiv}{div}
\DeclareMathOperator{\Gal}{Gal}
\DeclareMathOperator{\GL}{GL}
\DeclareMathOperator{\grad}{grad}
\DeclareMathOperator{\impart}{Im}
\DeclareMathOperator{\M}{M}
\DeclareMathOperator{\N}{N}
\DeclareMathOperator{\nr}{nr}
\DeclareMathOperator{\nrd}{nrd}
\DeclareMathOperator{\ord}{ord}
\DeclareMathOperator{\opP}{P}
\DeclareMathOperator{\PGL}{PGL}
\DeclareMathOperator{\PSL}{PSL}
\DeclareMathOperator{\repart}{Re}
\DeclareMathOperator{\rk}{rk}
\DeclareMathOperator{\sgn}{sgn}
\DeclareMathOperator{\Tr}{Tr}
\DeclareMathOperator{\trd}{trd}
\DeclareMathOperator{\vol}{vol}
\newcommand{\C}{\mathbb C}
\newcommand{\F}{\mathbb F}
\newcommand{\HH}{\mathbb H}
\newcommand{\PP}{\mathbb P}
\newcommand{\Q}{\mathbb Q}
\newcommand{\R}{\mathbb R}
\newcommand{\Z}{\mathbb Z}
\newcommand{\fraka}{\mathfrak{a}}
\newcommand{\frakb}{\mathfrak{b}}
\newcommand{\frakc}{\mathfrak{c}}
\newcommand{\frakd}{\mathfrak{d}}
\newcommand{\frakp}{\mathfrak{p}}
\newcommand{\frakq}{\mathfrak{q}}
\newcommand{\frakD}{\mathfrak{D}}
\newcommand{\frakM}{\mathfrak{M}}
\newcommand{\frakN}{\mathfrak{N}}
\newcommand{\calH}{\mathcal{H}}
\newcommand{\calL}{\mathcal{L}}
\newcommand{\calO}{\mathcal{O}}
\newcommand{\la}{\langle}
\newcommand{\ra}{\rangle}
\newcommand{\psmod}[1]{~(\textup{\text{mod}}~{#1})}
\newcommand{\quat}[2]{\displaystyle{\biggl(\frac{#1}{#2}\biggr)}}
\def\bs{\backslash}
\newcommand{\bhat}{\widehat{\mathfrak{b}}}
\newcommand{\betahat}{\widehat{\beta}}
\newcommand{\muhat}{\widehat{\mu}}
\newcommand{\Bhat}{\widehat{B}}
\newcommand{\Ehat}{\widehat{E}}
\newcommand{\Fhat}{\widehat{F}}
\newcommand{\calOhat}{\widehat{\mathcal{O}}}
\newcommand{\Zhat}{\widehat{\mathbb{Z}}}
\newcommand{\Qhat}{\widehat{\mathbb{Q}}}
\newcommand{\ZFhat}{\widehat{\mathbb{Z}}_F}
\newcommand{\Khat}{\widehat{K}}
\newcommand{\Gammahat}{\widehat{\Gamma}}
\newcommand{\defi}[1]{{{\fontfamily{phv}\fontsize{10.5pt}{1em}\selectfont \textbf{#1}}}}
\numberwithin{equation}{section}
\theoremstyle{remark}
\newtheorem{exm}[equation]{Example}
\newtheorem{rmk}[equation]{Remark}
\theoremstyle{plain}
\newtheorem{thm}[equation]{Theorem}
\newtheorem{theorem}[equation]{Theorem}
\newtheorem*{thma}{Theorem A}
\newtheorem*{thmb}{Theorem B}
\newtheorem*{thmc}{Theorem C}
\newtheorem*{thmd}{Theorem D}
\newtheorem{prop}[equation]{Proposition}
\newtheorem{lem}[equation]{Lemma}
\theoremstyle{definition}
\newtheorem{defn}[equation]{Definition}
\title{Small isospectral and nonisometric orbifolds \\ of dimension 2 and 3}
\author{Benjamin Linowitz}
\address{Department of Mathematics\\ 
530 Church Street\\
University of Michigan\\
Ann Arbor, MI 48109, USA}
\email{linowitz@umich.edu}
\author{John Voight}
\address{Department of Mathematics, Dartmouth College, 6188 Kemeny Hall, Hanover, NH 03755, USA; Department of Mathematics and Statistics, University of Vermont, 16 Colchester Ave, Burlington, VT 05401, USA}
\email{jvoight@gmail.com}
\begin{document}

\begin{abstract} 
Revisiting a construction due to Vign\'eras, we exhibit small pairs of orbifolds and manifolds of dimension $2$ and $3$ arising from arithmetic Fuchsian and Kleinian groups that are Laplace isospectral (in fact, representation equivalent) but nonisometric. 
\end{abstract}

\maketitle

\subsection*{Introduction}

In 1966, Kac \cite{Kac-Canyouhear} famously posed the question: ``Can one hear the shape of a drum?''  In other words, if you know the frequencies at which a drum vibrates, can you determine its shape?  Since this question was asked, hundreds of articles have been written on this general topic, and it remains a subject of considerable interest \cite{GiraudThas}.

Let $(M,g)$ be a connected, compact Riemannian manifold (with or without boundary).  Associated to $M$ is the Laplace operator $\Delta$, defined by $\Delta(f)=-\opdiv(\grad(f))$ for $f \in L^2(M,g)$ a square-integrable function on $M$.  The eigenvalues of $\Delta$ on the space $L^2(M,g)$ form an infinite, discrete sequence of nonnegative real numbers $0=\lambda_0 < \lambda_1 \leq \lambda_2 \leq \dots$, called the \defi{spectrum} of $M$.  In the case that $M$ is a planar domain, the eigenvalues in the spectrum of $M$ are essentially the frequencies produced by a drum shaped like $M$ and fixed at its boundary.  Two Riemannian manifolds are said to be \defi{Laplace isospectral} if they have the same spectra.  Inverse spectral geometry asks the extent to which the geometry and topology of $M$ are determined by its spectrum. For example, volume, dimension and scalar curvature can all be shown to be spectral invariants.

The problem of whether the isometry class itself is a spectral invariant received a considerable amount of attention beginning in the early 1960s.  In 1964, Milnor \cite{Milnor-Tori} gave the first negative example, exhibiting a pair of Laplace isospectral and nonisometric $16$-dimensional flat tori.  In 1980, Vign\'eras \cite{vigneras-isospectral} constructed non-positively curved, Laplace isospectral and nonisometric manifolds of dimension $n$ for every $n\geq 2$ (hyperbolic for $n=2,3$).  The manifolds considered by Vign\'eras are locally symmetric spaces arising from arithmetic: they are quotients by discrete groups of isometries obtained from unit groups of orders in quaternion algebras over number fields.  A few years later, Sunada \cite{Sunada} developed a general algebraic method for constructing Laplace isospectral manifolds arising from \emph{almost conjugate} subgroups of a finite group of isometries acting on a manifold, inspired by the existence of number fields with the same zeta function where the group and its subgroups form what is known as a \emph{Gassmann triple} \cite{BosmadeSmit}.  Sunada's method is extremely versatile and, along with its variants, accounts for the majority of known constructions of Laplace isospectral non-isometric manifolds \cite{DeturckGordon}.  Indeed, in 1992, Gordon, Webb and Wolpert \cite{GordonWebbWolpert} used an extension of Sunada's method in order to answer Kac's question in the negative for planar surfaces: ``One cannot hear the shape of a drum.''  For expositions of Sunada's method, the work it has influenced, and the many other constructions of Laplace isospectral manifolds, we refer the reader to surveys of Gordon \cite{Gordon-IsospectralSurvey, Gordon-SunadaSurvey} and the references therein.

The examples of Vign\'eras are distinguished as they cannot arise from Sunada's method: in particular, they do not cover a common orbifold \cite{Chen}. They are particularly beautiful as they come from `pure arithmetic' and appear quite naturally in the language of quadratic forms: the ``audible invariants'' of a lattice \cite{Conway-sensual} in a quadratic space were developed by Conway to study the genus of a quadratic form in this language, and Laplace isospectral but nonisometric  orbifolds arise when the (spinor) genus of an indefinite ternary quadratic form contains several distinct isometry classes.  This description allows further generalizations to hyperbolic $n$-orbifolds and $n$-manifolds for $n \geq 4$: see, for example, work of Prasad and Rapinchuk \cite{Prasad-Rapinchuk}.
 
Consequently, it is unfortunate that the literature surrounding these examples is plagued with errors.  The original explicit example given by Vign\'eras is incorrect due to a computational error: it is erroneously claimed \cite[p.~24]{vigneras-isospectral} that $3$ generates a prime ideal in $\Z[\sqrt{10}]$.  This error was subsequently corrected by Vign\'eras in her book \cite[IV.3.E]{vigneras-book}, however, the genus $g$ of the Riemann surfaces ($2$-manifolds) obtained from this construction are enormous: we compute them to be $g=100801$.  The method of Vign\'eras was later taken up by Maclachlan and Reid \cite[Example 12.4.8]{mac-reid-book}, who modified it to produce a pair of surfaces with genus $29$; due to a typo, it is incorrectly claimed that the genus of these $2$-manifolds is $19$.  Maclachlan and Reid also claim that the manifolds produced by this method tend to have large volume, and they note that there is no estimate on the smallest volume of a pair of Laplace isospectral but nonisometric hyperbolic $3$-manifolds \cite[p.~392]{mac-reid-book}.  Finally, in 1994, Maclachlan and Rosenberger \cite{MaclachlanRosenberger} claimed to have produced a pair of hyperbolic $2$-orbifolds of genus $0$ with orbifold fundamental groups of signature $(0;2,2,3,3)$.  However, Buser, Flach, and Semmler \cite{Buser2233} later showed that these examples were too good to be true and that such orbifolds could not be Laplace isospectral.  In an appendix, Maclachlan and Rosenberger \cite{Buser2233} give an arithmetic explanation of why their orbifolds were not Laplace isospectral: there is a subtlety in the embedding theory of orders in quaternion algebras called \emph{selectivity}, first studied by Chinburg and Friedman \cite{Chinburg-Friedman}.

In this article, we correct these errors and exhibit pairs of orbifolds and manifolds of dimensions $2$ and $3$ with minimal area (genus) and volume within certain nice classes of arithmetic groups.

\subsection*{Dimension 2}

We begin with the case of dimension $2$.  A Fuchsian group $\Gamma \leq \PSL_2(\R)$  acts properly discontinuously by isometries on the upper half-plane $\calH_2=\{x+yi \in \C : y>0\}$.  A Fuchsian group $\Gamma \leq \PSL_2(\R)$ is \defi{arithmetic} if it is commensurable with the image of the unit group of a maximal order in a quaternion algebra over a totally real field split at a unique real place.  A Fuchsian group is \defi{maximal} if it so under inclusion, and by a result of Borel \cite{borel-commensurability}, a maximal group is the normalizer of the group of totally positive units in an Eichler order.  If $\Gamma$ is an arithmetic Fuchsian group, then the orientable Riemann $2$-orbifold $X(\Gamma)=\Gamma \backslash \calH_2$ has finite area and is compact unless $\Gamma$ is commensurable with $\PSL_2(\Z)$, in which case we instead compactify and take $X(\Gamma)=\Gamma \backslash \calH_2^*$ where $\calH_2^*=\calH_2 \cup \PP^1(\Q)$ is the completed upper half-plane.  The orbifold $X(\Gamma)$ is a $2$-manifold if and only if $\Gamma$ is torsion free.  (The orbifold $X(\Gamma)$ can also be given the structure of a Riemann $2$-manifold or Riemann surface with a different choice of atlas in a neighborhood of a point with nontrivial stabilizer, `smoothing the corners' of the original orbifold.  However, this procedure changes the spectrum, and indeed ``one can hear the orders of the cone points of orientable hyperbolic $2$-orbifold'' \cite{DoyleRossetti}.  For further discussion on this point, see also work of Dryden \cite{Dryden}, Dryden, Gordon, Greenwald, and Webb \cite[Section 5]{Drydenetal}, and Gordon and Rossetti \cite[Proposition 3.4]{GordonRossetti}.)

If $\Gamma,\Gamma'$ are arithmetic Fuchsian groups such that the $2$-orbifolds $X(\Gamma)$, $X(\Gamma')$ are Laplace isospectral but not isometric, then we say that they are an \defi{isospectral-nonisometric pair}.  The \defi{area} of such a pair is the common area of $X(\Gamma)$ and $X(\Gamma')$.  


The Fuchsian group with the smallest co-area is the arithmetic triangle group with co-area $\pi/21$ and signature $(0;2,3,7)$: it arises as the group of totally positive (equivalently, norm $1$) units in a maximal order of a quaternion algebra defined over the totally real subfield $F=\Q(\zeta_7)^+$ of the cyclotomic field $\Q(\zeta_7)$.  Quotients by torsion-free normal subgroups of this group correspond to Hurwitz curves, those Riemann surfaces with largest possible automorphism group for their genus.  

Our first main result is as follows (Theorem \ref{theorem:bestorbifold}).  

\begin{thma} 
The minimal area of an isospectral-nonisometric pair of $2$-orbifolds associated to maximal arithmetic Fuchsian groups is $23\pi/6$, and this bound is achieved by exactly three pairs, up to isomorphism.
\end{thma}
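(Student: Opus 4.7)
The plan is to combine an explicit area formula with Vign\'eras's construction of isospectral pairs and then carry out a finite enumeration. By Borel's theorem, any maximal arithmetic Fuchsian group $\Gamma$ is the normalizer in $\PSL_2(\R)$ of the image of the group of totally positive units of an Eichler order $\calO$ of squarefree level $\frakN$ (coprime to the discriminant $\frakD$) in a quaternion algebra $B$ over a totally real field $F$ split at exactly one real place. The Shimizu--Eichler volume formula gives
\[
\area(X(\calO^1)) = \frac{8\pi\, d_F^{3/2}\, \zeta_F(2)}{(4\pi^2)^{[F:\Q]}} \prod_{\frakp \mid \frakD}(\N(\frakp)-1) \prod_{\frakp \mid \frakN}(\N(\frakp)+1),
\]
and $\area(X(\Gamma))$ is obtained by dividing by the index of the image of $\calO^1$ in $\Gamma$, a power of $2$ controlled by the $2$-torsion of a narrow ray class group of $F$ together with the primes dividing $\frakD\frakN$.

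Next I would apply Vign\'eras's criterion: if Eichler orders $\calO_1, \calO_2$ lie in the same genus inside $B$ but in distinct conjugacy classes, then the orbifolds $X(\Gamma_i)$ arising from their normalizers are representation equivalent, hence Laplace isospectral, and they are nonisometric whenever the Chinburg--Friedman selectivity obstruction vanishes. The number of conjugacy classes in the genus is the type number $t(B, \frakN)$, computable by class field theory as the order of a quotient of a narrow ray class group of $F$ with modulus supported on $\frakD\frakN$ and the real places.

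The core of the argument is a finite enumeration. Using Odlyzko-type discriminant bounds, I would restrict to totally real fields $F$ for which even the base area (with $\frakD = \frakN = (1)$) is bounded by $23\pi/6$; this limits $[F:\Q]$ to a small range and $d_F$ to a finite list within each degree. For each admissible $(F,B)$ I enumerate squarefree levels $\frakN$ keeping $\area(X(\Gamma)) \leq 23\pi/6$, compute $t(B,\frakN)$, and retain configurations with $t(B,\frakN) \geq 2$. Sorting this finite list by area yields the minimum; verifying that the configurations achieving the minimum are genuinely nonisometric (not conjugate in $\Isom(\calH_2)$) pins down the three pairs.

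The main obstacle is making the enumeration provably complete while keeping tight control on the normalization index $[\Gamma : \calO^1/\{\pm 1\}]$: a sloppy bound there can miss small candidates or admit spurious ones, and the final count of exactly three pairs is quite sensitive to these details, especially when small primes ramify in $F$ or divide $\frakD\frakN$. A secondary obstacle is ruling out selectivity for all borderline examples, since selective orders can sit in distinct conjugacy classes without producing nonisometric orbifolds.
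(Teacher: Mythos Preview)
Your overall plan matches the paper's: Borel's characterization of maximal groups, the volume formula, Odlyzko bounds to make the enumeration finite, then type numbers and a selectivity check. Two points deserve correction or amplification.

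First, you have the role of selectivity backwards. Distinct conjugacy classes (type number $>1$) give \emph{nonisometry} immediately; the Chinburg--Friedman selectivity obstruction, when present, spoils \emph{isospectrality}, not nonisometry. So the correct logic is: same genus and no selective conjugacy class $\Rightarrow$ representation equivalent (hence isospectral); distinct conjugacy classes $\Rightarrow$ nonisometric. Your last sentence (``selective orders can sit in distinct conjugacy classes without producing nonisometric orbifolds'') should read ``\ldots without producing \emph{isospectral} orbifolds''.

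Second, you correctly flag control of the index $[\Gamma^*:\Gamma^1]$ as the main obstacle, but you are missing the mechanism the paper uses to bound it. A priori this index is $2^{m+\omega(\frakD\frakN)+h_2}$, and neither $\omega$ nor $h_2$ is bounded by the data you list. The key bootstrap is that the area bound $\area(X^*)\le 23\pi/6$ forces, via Riemann--Hurwitz, at most seven elliptic cycles of even order, so the elementary abelian $2$-quotient of $\Gamma^*$ has rank $\le 6$, whence $[\Gamma^*:\Gamma^1]\le 2^6$. That single step turns the problem into a finite one. To then push the root-discriminant bounds into a range where existing tables of totally real fields suffice, the paper layers on Armitage--Fr\"ohlich (relating $m$ and $h_2$), Poitou's refinement of Odlyzko using primes of small norm, and an iterative argument playing these off against one another degree by degree. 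Without these refinements the raw Odlyzko bound leaves the search too large.
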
 

Since $(23\pi/6)/(\pi/21) = 80.5$, the three pairs in Theorem A indeed have relatively small area; they all have signature $(0;2,2,2,2,2,3,4)$, so in particular the underlying Riemann surfaces have genus zero.  These pairs are described in detail in Section 3: they arise from the method of Vign\'eras and hence are non-Sunada.  The groups fall naturally into two Galois orbits: each Galois orbit arises from unit groups of maximal orders in a quaternion algebra defined over a totally real field $F$ and the pairs correspond to the possible choices of the split real place: the two totally real fields $F$ are $\Q(\sqrt{34})$ (either choice of split real place gives an isomorphic pair) and the primitive totally real quartic field of discriminant $21200$, generated by a root of the polynomial $x^4-6x^2-2x+2$ (two different choices of four possible split real places yield two nonisometric pairs).  

Our second result concerns manifolds (Theorem \ref{theorem:manifoldexamples}).  Recall that the area $A$ of a $2$-manifold is determined by its genus $g$ according the formula $A=2\pi (2g-2)$.  For this theorem, we turn to a different class of groups to suitably enrich the set of interesting examples.  We say that an arithmetic Fuchsian group is \defi{unitive} if it contains the group of units of reduced norm $1$ and is contained in the group of totally positive units of an Eichler order, with equality in either case allowed.  Unitive Fuchsian groups are not necessarily maximal---however, a maximal group properly containing a unitive Fuchsian group very often has torsion and so rarely gives rise to a $2$-manifold. 

\begin{thmb} 
The minimal genus of an isospectral-nonisometric pair of $2$-manifolds associated to unitive torsion-free Fuchsian groups is $6$, and this bound is achieved by exactly two pairs, up to isomorphism.
\end{thmb}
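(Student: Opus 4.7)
The plan is to reduce the theorem to an exhaustive enumeration of arithmetic data and to verify the bound by inspection. The data consists of a totally real field $F$, a quaternion algebra $B/F$ split at a unique real place, an Eichler order $\calO \subset B$ of level $\frakN$, and a unitive sandwich group $\Gamma$ satisfying $\Gamma^1(\calO) \leq \Gamma \leq \Gamma^+(\calO)$. By Vign\'eras's criterion (the same machinery underlying Theorem A), if $\calO, \calO'$ are two Eichler orders of the same level that are locally conjugate everywhere but not globally conjugate, then the associated unitive groups are representation equivalent and hence Laplace isospectral; if selectivity in the sense of Chinburg--Friedman does not intervene, then distinct isomorphism classes of orders yield genuinely nonisometric manifolds.

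First I would bound the search space via Borel's volume formula. The area of the norm-one quotient is, up to elementary factors, a product of $\zeta_F(-1)$, a power of the absolute discriminant of $F$, and local Euler factors $\prod_{\frakp \mid \frakD}(N\frakp - 1) \prod_{\frakp \mid \frakN}(N\frakp + 1)$, where $\frakD$ is the discriminant of $B$; for a general unitive group the area is rescaled by the (well-controlled) index of the sandwich subgroup, a power of $2$ governed by the narrow class group and totally positive units modulo squares. Using $A = 2\pi(2g - 2)$, the constraint $g \leq 6$ forces $A \leq 20\pi$, and combined with Odlyzko's discriminant bounds this restricts $F$ to a finite list of small degree and modest discriminant, and for each such $F$ bounds $N(\frakD\frakN)$.

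Next I would impose the torsion-free condition. Elliptic elements of a unitive group correspond to optimal embeddings into $\calO$ of orders in cyclotomic quadratic extensions $F(\zeta_n)$, together with additional embeddings from $F(\sqrt{u})$ for totally positive units $u$ when $\Gamma \supsetneq \Gamma^1(\calO)$. Eichler's embedding-number formula, applied prime by prime, yields a clean local criterion on the splitting of $\frakp \mid \frakD\frakN$ in the relevant quadratic extensions. In practice, forcing $B$ to ramify at the right primes kills the bulk of potential torsion, but the conditions are delicate at small level and eliminate many candidate triples outright.

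For each surviving triple $(F, B, \frakN)$ I would then compute the type number $t(\calO)$ of Eichler orders of level $\frakN$ via a narrow ray class group calculation, retain those with $t(\calO) \geq 2$, verify that selectivity does not collapse the isospectrality, and read off the genus of each unitive sandwich quotient. The upper bound is established by exhibiting the two surviving pairs at $g = 6$. The main obstacle is proving exhaustiveness for the lower bound: the relevant quantities (the ramification set of $B$, the level $\frakN$, the index of the sandwich, the structure of the narrow class group) interact nonmonotonically, so no single parameter drives a one-pass search, and the argument must descend to a careful case analysis across the finite list of totally real fields, quaternion algebras, and levels left standing after the area bound.
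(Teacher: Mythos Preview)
Your proposal is correct and follows essentially the same approach as the paper: bound the area by $20\pi$, use Borel's volume formula together with Odlyzko--Poitou--Armitage--Fr\"ohlich bounds to reduce to a finite list of totally real fields, enumerate Eichler orders of bounded discriminant and level, discard those with type number one or with selectivity, and filter by the torsion-free condition via embeddings of cyclotomic quadratic extensions; the two genus-$6$ pairs over the quartic field of discriminant $5744$ then emerge as the unique survivors. Your remark about additional torsion in $\Gamma \supsetneq \Gamma^1$ coming from $F(\sqrt{u})$ for totally positive units $u$ is in fact slightly more explicit than the paper's treatment, which in the final enumeration observes this phenomenon only in passing (the paper notes that in the winning example $\Gamma^+$ has signature $(3;2,2)$ and so is not torsion-free, forcing one down to $\Gamma^1$).
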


We exhibit the pairs of genus $6$ manifolds achieving the bound in Theorem B in Example  \ref{exm:manifoldexample}; they fall into one Galois orbit, defined over the totally real quartic field of discriminant $5744$.

\subsection*{Dimension 3}

We now turn to analogous results in dimension $3$, looking at Kleinian groups $\Gamma \leq \PSL_2(\C)$ acting on hyperbolic $3$-space $\calH_3=\C \times \R_{>0}$ and their quotients $X(\Gamma)=\Gamma \backslash \calH_3^{(*)}$, which are complete, orientable hyperbolic $3$-orbifolds.  As opposed to the case of $2$-orbifolds, by the Mostow rigidity theorem \cite{Mostow,Prasad}, hyperbolic volume is a topological invariant.

Gehring, Marshall, and Martin \cite{GM,MM}, after a long series of prior results, have identified the smallest volume hyperbolic $3$-orbifold: it is the orbifold associated to a maximal arithmetic Kleinian group associated to a maximal order in the quaternion algebra over the quartic field $F$ (generated by a root of $x^4+6x^3+12x^2+9x+1$) of discriminant $-275$ ramified at the two real places.  This orbifold has volume 
\[ \frac{275^{3/2}\zeta_F(2)}{2^{7} \pi^{6}}=0.03905\ldots \]
where $\zeta_F(s)$ is the Dedekind zeta function of $F$; its uniformizing group is a two-generated group obtained as a $\Z/2\Z$-extension of the orientation-preserving subgroup of the group generated by reflection in the faces of the $3$-$5$-$3$-hyperbolic Coxeter tetrahedron.  Chinburg and Friedman \cite{chinburg-smallestorbifold} had previously shown that it is the \emph{arithmetic} hyperbolic $3$-orbifold with smallest volume.  

The method of Vign\'eras, and in particular all of the examples of isospectral-nonisometric pairs produced in this paper, are more than just isospectral: they are in fact representation equivalent (see \S 2) and consequently strongly isospectral (isospectral with respect to all natural strongly elliptic operators, such as the Laplacian acting on $p$-forms for each $p$).  Laplace isospectrality and representation equivalence are known to be equivalent for $2$-orbifolds (see Remark \ref{rmk:repequiv}) but it is already unknown for $3$-manifolds.  For the purposes of our algebraic techniques, it is preferable to work with the stronger condition of representation equivalence; accordingly, we say that a pair of arithmetic (Fuchsian or) Kleinian groups $\Gamma,\Gamma'$ are a \defi{representation equivalent-nonisometric pair} if the orbifolds $X(\Gamma)$, $X(\Gamma')$ are 
representation equivalent but not isometric.

We define a \defi{unitive} Kleinian group analogously as for a Fuchsian group.  To simplify our search in dimension $3$, we restrict to unitive groups (see Remark \ref{rmk:nonunitive3orbifold} for the scope of the calculation required to extend this to maximal groups).  

\begin{thmc} 
The smallest volume of a representation equivalent-nonisometric pair of $3$-orbifolds associated to unitive Kleinian groups is $2.8340\ldots,$ and this bound is attained by a unique pair, up to isomorphism.  
\end{thmc}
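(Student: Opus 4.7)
The plan is to carry out a finite but exhaustive enumeration of unitive arithmetic Kleinian groups of covolume at most the candidate value $2.8340\ldots$, determine for each whether the associated Eichler order lies in a genus of type number at least $2$, and then apply the Vign\'eras construction to extract the representation-equivalent nonisometric pairs. The tool that renders the enumeration finite is Borel's volume formula: for a number field $F$ with a unique complex place, a quaternion algebra $B/F$ ramified at all real places of $F$ and at a finite set $\calS$ of primes, and an Eichler order $\calO \subseteq B$ of level $\frakN$, the covolume in $\PSL_2(\C)$ of the image of the group of units of reduced norm $1$ takes the shape
\[
\covol(\Gamma_{\calO}^1) = C_{[F:\Q]} \cdot |d_F|^{3/2}\, \zeta_F(2) \cdot \prod_{\frakp \in \calS}(\N\frakp - 1) \cdot \prod_{\frakp \mid \frakN}(\N\frakp + 1),
\]
where $C_{[F:\Q]}$ is a universal constant depending only on the degree; the passage from $\Gamma_{\calO}^1$ to the ambient unitive group multiplies this by an explicit rational factor controlled by narrow class group and unit group data of $F$. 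Combined with Odlyzko's unconditional lower bounds on root discriminants and the trivial estimate $\zeta_F(2) \geq 1$, the inequality $\covol(\Gamma) \leq 2.8340\ldots$ confines $[F:\Q]$, $|d_F|$, $|\calS|$, and $\N\frakN$ to computable finite ranges.

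With the search space bounded, I would enumerate candidates hierarchically: first the number fields $F$ with exactly one complex place and admissible discriminant, drawing from standard tables; then for each $F$ the quaternion algebras $B/F$ with admissible ramification set; and finally, for each $(F,B)$, the admissible Eichler levels $\frakN$. For each triple $(F,B,\frakN)$ I compute the covolume exactly (including the unitive correction), and when it lies below $2.8340\ldots$ I compute the type number $t(B,\frakN)$ via Eichler's mass formula and the requisite class group data. Triples with $t(B,\frakN) = 1$ are discarded; those with $t(B,\frakN) \geq 2$ yield, by the Vign\'eras argument, two non-conjugate Eichler orders $\calO, \calO'$ in the same genus, and the corresponding unitive unit groups $\Gamma, \Gamma'$ then produce representation-equivalent orbifolds, since $L^2(\Gamma \backslash G)$ and $L^2(\Gamma' \backslash G)$ are isomorphic as $G = \PSL_2(\C)$-representations. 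A direct comparison of commensurator and invariant trace-field data rules out accidental isometries.

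The principal obstacle is controlling \emph{selectivity}: the Chinburg--Friedman criterion, suitably extended to Eichler orders, identifies precisely when two non-conjugate orders in a single genus nevertheless give rise to conjugate unit groups inside $B^{\times}$, and hence to isometric orbifolds. This is exactly the phenomenon that invalidated the Maclachlan--Rosenberger example cited in the introduction, so for every candidate triple with $t(B,\frakN) \geq 2$ one must verify, against each relevant maximal subfield of $B$, that selectivity does not collapse the pair. A secondary but equally delicate obstacle is numerical: certifying that no smaller example has been missed at the low end requires sharp estimates of $\zeta_F(2)$ for each $F$ in the search and careful bookkeeping of the unitive correction factor---exactly where the earlier errors of Vign\'eras and Maclachlan--Reid originated. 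Once both issues are handled, the unique minimizing pair emerges from the enumeration and its representation equivalence and nonisometry can be verified in it directly.
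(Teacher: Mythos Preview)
Your overall strategy---bound the degree and discriminant via Borel's volume formula and Odlyzko's bounds, enumerate ATR fields, then quaternion algebras, then Eichler orders, and filter by type number---is exactly the approach the paper takes. However, there is a genuine conceptual error in your treatment of selectivity, and a practical gap in the enumeration step.

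You have the role of selectivity backwards. You write that the Chinburg--Friedman criterion ``identifies precisely when two non-conjugate orders in a single genus nevertheless give rise to conjugate unit groups inside $B^{\times}$, and hence to isometric orbifolds.'' This is not what selectivity does. If the type number is at least $2$, the orders $\calO,\calO'$ are genuinely non-conjugate and the resulting orbifolds are automatically \emph{nonisometric} (Proposition~\ref{prop:isometrytheorem}). Selectivity is the obstruction to \emph{isospectrality}: a selective conjugacy class is one that meets $\Gamma$ but not $\Gamma'$ (or vice versa), so its geodesic length appears in one spectrum and not the other. This is precisely why the Maclachlan--Rosenberger orbifolds failed---they were nonisometric but \emph{not} isospectral. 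Consequently, your assertion that type number $\geq 2$ already yields representation equivalence via the Vign\'eras argument is false without the additional hypothesis that no conjugacy class is selective; Theorem~\ref{theorem:isospectraltheorem} makes this dependence explicit. In the winning example (Example~\ref{example:3orbifolds}), the algebra $B$ is unramified at all finite places, so selectivity is a live possibility and must be ruled out by checking that the relevant ray class field does not embed in $B$.

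On the practical side, ``drawing from standard tables'' understates what is required. The paper needs the refined Odlyzko--Poitou bounds together with Armitage--Fr\"ohlich to push the degree down to $n \leq 8$ and to obtain root-discriminant cutoffs (Table~\eqref{odlyzkotabledim3prelim}) tight enough to be feasible; even so, the degree-$8$ ATR enumeration took roughly $1.5$ CPU-years and required extending existing tables with tailored Hunter-search techniques. The naive bound from $\zeta_F(2)\geq 1$ alone is not sharp enough to make the search terminate in practice.
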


The pair achieving the bound in Theorem C is arises from a quaternion algebra defined over the sextic field $F$ of discriminant $-974528$ generated by a root of $x^6-2x^5-x^4+4x^3-4x^2+1$ and ramified only at the four real places of $F$: the volume is given exactly by the expression
\[ \frac{15227^{3/2}\zeta_F(2)}{8\pi^{10}}=2.8340\ldots. \]
The ratio $2.8340/0.03905 = 72.57\ldots$ shows that this pair is indeed comparably small as in the case of $2$-orbifolds, again contrary to common belief that the examples of Vign\'eras have large volume.  For a complete description of this pair, see Example \ref{example:3orbifolds}.

Finally, we consider $3$-manifolds.  Gabai, Meyerhoff, and Milley identified the Weeks manifold as the lowest volume $3$-manifold \cite{GMM1,GMM2}, following work of Chinburg, Friedman, Jones, and Reid \cite{Chinburg-Friedman-Jones-Reid-smallestmanifold} who showed it was the smallest arithmetic $3$-manifold.  It arises from a subgroup of index $12$ in a maximal group associated to a maximal order in the quaternion algebra over the cubic field of discriminant $-23$ (generated by a root of $x^3-x+1$) ramified at the real place and the prime of norm $5$, or equivalently is obtained by $(5, 1)$, $(5, 2)$ surgery on the two components of the Whitehead link.  The Weeks manifold has volume 
\[ \frac{3 \cdot 23^{3/2} \zeta_F(2)}{4\pi^4} = 0.9427\ldots. \]
(Minimal volume hyperbolic $3$-orbifolds and manifolds were previously known to exist, but not explicitly, by a theorem of J\o{}rgenson and Thurston \cite{DunbarMeyerhoff,Gromov}.)

\begin{thmd} 
There exist representation equivalent-nonisometric $3$-manifolds associated to torsion-free arithmetic Kleinian groups of covolume $39.2406\ldots$\,.
\end{thmd}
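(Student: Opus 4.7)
The plan is to exhibit an explicit pair of non-conjugate maximal orders in a quaternion algebra over a number field with one complex place, pass to a torsion-free congruence subgroup, and verify that the resulting arithmetic Kleinian groups are representation equivalent, nonisometric, and have the prescribed covolume $39.2406\ldots$\,.

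A natural starting point is a candidate algebra underlying the pair of Theorem C (or a comparable small-volume candidate): any pair $\calO,\calO'$ of non-conjugate maximal orders in the same genus of a quaternion algebra $B/F$ with $F$ having exactly one complex place and $B$ ramified at all real places yields, via the Vign\'eras construction, arithmetic Kleinian groups $\Gamma,\Gamma'$ that are representation equivalent (from the local isomorphisms $\calO_\frakp \cong \calO'_\frakp$ at every finite prime $\frakp$ identifying the adelic permutation representations attached to the two orders) and nonisometric (from the nonconjugacy of $\calO$ and $\calO'$ in $B^*$, which descends to nonconjugacy in $\PSL_2(\C)$).

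These groups, however, are rarely torsion-free.  To produce a manifold I would intersect with a common principal congruence subgroup $\Gamma(\frakn),\Gamma'(\frakn)$ for a suitable integral ideal $\frakn \subseteq \calO_F$, chosen so that $\Gamma(\frakn)$ is torsion-free.  Minimality of $\frakn$ can be certified by identifying, for each cyclotomic subfield of $F$ that embeds into $B$, the corresponding torsion elements of $\calO^1$ and verifying that none reduces to the identity modulo $\frakn$.  Because the local isomorphism $\calO_\frakp \cong \calO'_\frakp$ identifies the two congruence subgroups at every prime dividing $\frakn$, both representation equivalence and non-isometry descend from $\Gamma,\Gamma'$ to $\Gamma(\frakn),\Gamma'(\frakn)$.

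The final step is to compute the covolume of $\Gamma(\frakn)$ as the product of Borel's formula for $\vol(\Gamma \backslash \calH_3)$---expressible in terms of $|d_F|^{3/2}$, $\zeta_F(2)$, and the factors $N\frakp - 1$ for $\frakp$ in the finite ramification set of $B$---with the index $[\Gamma : \Gamma(\frakn)]$, and to compare with the target $39.2406\ldots$\,.  The main obstacle is computational: minimizing covolume forces balancing a small discriminant $|d_F|$ and a small ramification set against the torsion index, since the candidates with smallest $|d_F|$ typically carry the most torsion and hence require the largest $\frakn$.  I would therefore sweep tables of number fields of small discriminant with exactly one complex place, for each candidate compute the type number (from an adelic class number calculation) and the torsion structure of $\calO^1$, and search in order of covolume until the stated value $39.2406\ldots$ is attained.
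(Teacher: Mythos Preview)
Your strategy differs from the paper's in a key way, and it also contains a genuine gap.

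\textbf{The gap: selectivity.} You assert that local isomorphism $\calO_\frakp \cong \calO'_\frakp$ at every finite $\frakp$ already gives representation equivalence of $\Gamma,\Gamma'$. This is exactly the error the paper warns about in the introduction (the Maclachlan--Rosenberger example). Being in the same genus is necessary but not sufficient: by Theorem~\ref{theorem:isospectraltheorem} and Proposition~\ref{prop:selectconst} you must also rule out \emph{selective} conjugacy classes, governed by Theorems~\ref{theorem:eichlerorderselectivity} and~\ref{theorem:chinburgfriedman_selectivity}. If you start from the Theorem~C algebra (which has $\frakD=(1)$), selectivity is a live issue and must be checked explicitly; the paper does this for that algebra, but your proposal omits the step entirely.

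\textbf{The different route to torsion-freeness.} You propose killing torsion by passing to a principal congruence subgroup $\Gamma(\frakn)$, paying the index $[\Gamma:\Gamma(\frakn)]$ in volume. The paper instead chooses the algebra so that the unitive groups $\Gamma_i^+$ are \emph{already} torsion-free: over the cubic field $F=\Q[x]/(x^3-x^2+3x-4)$ of discriminant $-331$, one ramifies $B$ at the real place and at a finite prime $\frakp$ of norm $13$ that splits in both $F(\sqrt{-1})$ and $F(\sqrt{-3})$. By Lemma~\ref{lemcycquad}, since these are the only cyclotomic quadratic extensions of a primitive $F$ and neither embeds in $B$, the groups $\Gamma_i^+$ have no torsion. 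This buys two things at once: no congruence index inflates the volume, and the finite ramification of $B$ automatically precludes selectivity by Theorem~\ref{theorem:eichlerorderselectivity}. The volume $39.2406\ldots$ is then the covolume of $\Gamma^+$ itself, with $[\Gamma^+:\Gamma^1]=2$. Your congruence-subgroup route, applied to the sextic Theorem~C algebra, would multiply the orbifold volume $2.8340\ldots$ by an index $\geq |\PSL_2(\Z_F/\frakn)|$, overshooting the target substantially.

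A minor point: your claim that non-isometry descends to $\Gamma(\frakn),\Gamma'(\frakn)$ is plausible but not as automatic as stated; one must argue that the maximal orders are recoverable from the congruence subgroups (e.g.\ via Proposition~\ref{prop:isometrytheorem} and the commensurator).
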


This pair of manifolds is described in Example \ref{example:3manifolds}.  This example is the smallest we found looking among $3$-manifolds, but to check if it is indeed the smallest would require significant effort (because of its large volume; see Remark \ref{rmk:nonunitive3manifold}).

\subsection*{Summary}

Our contributions in this article are as follows.  First, we give a criterion which allows us to establish isospectrality for quaternionic Shimura varieties arising from maximal arithmetic groups, building on the foundational work of Vign\'eras \cite{vigneras-isospectral} and generalizing work of Rajan \cite{Rajan}.  This criterion allows more flexibility in considering Fuchsian and Kleinian groups to find the smallest examples.  Second, following the strategy employed by many authors \cite{Agol, chinburg-smallestorbifold, Chinburg-Friedman-Jones-Reid-smallestmanifold, GMMR, mac-reid-book, Voight-shim}, we apply the volume formula due to Borel and discriminant bounds due to Odlyzko to reduce the question to a finite calculation.  However, the simplest bounds one might obtain this way are far too large to be of practical use; we combine several additional constraints (some new, some old) that conspire together to whittle the bounds down to a still large but computable range.  These methods allow us to list all arithmetic groups of bounded volume, in particular reproving the results of Chinburg and Friedman and Chinburg, Friedman, Jones, and Reid cited above, replacing technical lemmas with a sequence of algorithmically checkable bounds.  We expect that this blend of ideas will be useful in other domains: they have already been adapted to the case of arithmetic reflection groups by Belolipetsky and Linowitz \cite{MishaBen}.  Finally, we use algorithms for quaternion algebras and arithmetic Fuchsian and Kleinian groups, implemented in \textsc{Magma} \cite{Magma}, to carry out the remaining significant enumeration problem: we list the possible pairs and identify the ones of smallest area under congruence hypotheses for both orbifolds and manifolds in dimensions $2$ and $3$. 

This paper is organized as follows.  In section 1, we set up the basic notation and review the method of Vign\'eras.  In section 2, we consider the subjects isospectrality and selectivity.  In section 3, we exhibit the examples described in Theorem A.  In section 4, we prove Theorem A and describe the computations involved.  In section 5, we prove Theorem B.  Finally, in section 6, we prove Theorems C and D.

We leave several avenues open for further work.  First and foremost, it would be interesting to extend our results from maximal or unitive groups to the full class of \emph{congruence} (arithmetic) groups; our general setup allows this more general class, and we prove some results in this direction, but they are not included in our Theorems A--D because they require some rather more involved group-theoretic arguments as well as a treatment of selectivity that falls outside of the scope of this paper.  Second, considering nonorientable isospectral orbifolds also promises to provide some intriguing small examples.  Third, there is the case of higher dimension: one could continue with the extension of the method of Vign\'eras using quaternion algebras, but it is perhaps more natural to consider quotients of hyperbolic $n$-space for $n \geq 4$, working explicitly instead with the genus of a quadratic form.  Finally, it would be interesting to seek a common refinement of our work with the work of Prasad and Rapinchuk on weak commensurability \cite{Prasad-Rapinchuk} and its relationship to isospectrality.

The authors would like to thank Peter Doyle for his initial efforts and enduring support on this project, as well as Peter Buser, Richard Foote, Carolyn Gordon, Dubi Kelmer, Markus Kirschmer, Aurel Page, and the anonymous referee for helpful comments.  The first author was partially supported by an NSF RTG Grant (DMS-1045119) and an NSF Postdoctoral Fellowship (DMS-1304115); the second author was partially supported by an NSF CAREER Award (DMS-1151047).

\section{Quaternionic Shimura varieties} \label{section:shimuravarieties}

In this first section, we set up notation used in the construction of Vign\'eras \cite{vigneras-isospectral,vigneras-book}; see also the exposition by Maclachlan and Reid \cite{mac-reid-book}.  We construct orbifolds as quotients of products of upper half-planes and half-spaces by arithmetic groups arising from quaternion orders; for more on orbifolds, see e.g.\ Scott \cite{Scott} or Thurston \cite{Thurston}.  To be as self-contained as possible, and because they will become important later, we describe some of this standard material in detail.  (For more detail on orders see Reiner \cite{Reiner} as well as work of Kirschmer and Voight for an algorithmic perspective on orders and ideals in quaternion algebras \cite{KirschmerVoight}.)

\subsection*{Quaternion algebras and orders}

Let $F$ be a number field and let $\Z_F$ be its ring of integers.  A \defi{quaternion algebra} $B$ over $F$ is a central simple algebra of dimension $4$ over $F$, or equivalently an $F$-algebra with generators $i,j \in B$ such that 
\begin{equation} \label{quateq}
i^2=a, \quad j^2=b, \quad ji=-ij
\end{equation}
with $a,b \in F^*$; such an algebra is denoted $B=\quat{a,b}{F}$.

Let $B$ be a quaternion algebra over $F$.  Then $B$ has a unique (anti-)involution $\overline{\phantom{x}}:B \to B$ called \defi{conjugation} such that the \defi{reduced trace} $\trd(\alpha)=\alpha+\overline{\alpha}$ and the \defi{reduced norm} $\nrd(\alpha)=\alpha\overline{\alpha}$ belong to $F$ for all $\alpha \in B$.  

Let $K$ be a field containing $F$ as a subfield.  Then $B_K=B \otimes_F K$ is a quaternion algebra over $K$, and we say $K$ \defi{splits} $B$ if $B_K \cong M_2(K)$.  If $[K:F]=2$, then $K$ splits $B$ if and only if there exists an $F$-embedding $K \hookrightarrow B$.  Let $v$ be a place of $F$, and let $F_v$ denote the completion of $F$ at $v$.  We say $B$ is \defi{split} at $v$ if $F_v$ splits $B$, and otherwise we say that $B$ is \defi{ramified} at $v$.  The set of ramified places of $B$ is of finite (even) cardinality and uniquely characterizes $B$ up to $F$-algebra isomorphism.  We define the \defi{discriminant} $\frakD$ of $B$ to be the ideal of $\Z_F$ given by the product of all finite ramified primes of $B$.  

An \defi{order} $\calO \subset B$ is a subring of $B$ (with $1 \in \calO$) with $\calO F =B$ that is finitely generated as a $\Z_F$-submodule.  A sub-$\Z_F$-algebra $\Lambda$ with $\Lambda F=B$ is a $\Z_F$-order if and only if each $\alpha \in \Lambda$ is integral ($\trd(\alpha),\nrd(\alpha) \in \Z_F$ for all $\alpha \in \Lambda$).  An order is \defi{maximal} if it is maximal under inclusion.  By the Skolem-Noether theorem, two orders $\calO,\calO'$ are isomorphic if and only if there exists $\nu \in B^\times$ such that $\calO' = \nu \calO \nu^{-1}$.  The \defi{(reduced) discriminant} $\frakd$ of an order $\calO$ is the square root of the $\Z_F$-ideal generated by
$\det(\trd(\gamma_i\gamma_j))_{i,j=1,\dots,4}$ for all $\gamma_1,\dots,\gamma_4 \in \calO$.  An order $\calO$ of reduced discriminant $\frakd$ is maximal if and only if $\frakd=\frakD$.


\subsection*{Quaternionic Shimura varieties}

Let $r$ and $c$ be the number of real and complex places of $F$, respectively, so that $[F:\Q]=r+2c=n$.  Suppose that $B$ is split at $s$ real places and is ramified at the remaining $r-s$ places, so that 
\begin{equation} \label{BR}
B \hookrightarrow B \otimes_{\Q} \R \xrightarrow{\sim} \M_2(\R)^s \times \HH^{r-s} \times \M_2(\C)^c
\end{equation}
Let $\iota=(\iota_1,\dots,\iota_{s+c})$ denote the projection $B \to \M_2(\R)^s \times \M_2(\C)^c$.  Let 
\[ F_+^\times=\{x \in F^\times: v(x)>0 \text{ for all real places $v$}\} \] 
be the group of totally positive elements of $F$, let $\Z_{F,+}^\times = \Z_F^\times \cap F_+^\times$, and let
\[ B_+^\times = \{ \alpha \in B^\times : \nrd(\gamma)\in F_+^\times\} \]
be the group of units in $B$ of totally positive reduced norm.  

Let $\calH_2=\{z=x+iy \in \C : \impart(z)>0\}$ and $\calH_3=\C \times \R_{>0}=\{(z,t) : z \in \C, t \in \R_{>0}\}$ be the hyperbolic spaces of dimensions $2$ and $3$, equipped with the metrics 
\begin{equation} \label{metrics}
ds^2=\frac{dx^2+dy^2}{y^2} \text{\ and\ } ds^2=\frac{dx^2+dy^2+dt^2}{t^2},
\end{equation}
respectively.  Then the group $\opP\!B_+^\times=B_+^\times/Z(B_+^\times)=B_+^\times/F^\times$ with
\[ \iota(\opP\!B_+^\times) \subset \PGL^+_2(\R)^s \times \PSL_2(\C)^c \] 
acts naturally on the symmetric space $\calH=\calH_2^s \times \calH_3^c$ (on the left) by orientation-preserving isometries.  We will identify the group $\opP\!B_+^\times$ and its subgroups with their images via $\iota$.

Now let $\calO(1) \subseteq B$ be a maximal order, and let $\Gamma$ be a subgroup of $\opP\!B_+^\times$ commensurable with $\opP\!\calO(1)^\times=\calO(1)^\times/\Z_F^\times$; then $\iota(\Gamma) \subset \PGL^+_2(\R)^s \times \PSL_2(\C)^c$ is a discrete \defi{arithmetic} subgroup acting properly discontinuously on $\calH$; accordingly, the quotient $X=X(\Gamma)=\Gamma \backslash \calH$ has the structure of a Riemannian orbifold, a Hausdorff topological space equipped with an orbifold structure (locally modelled by the quotient of $\R^n$ by a finite group).  The orbifold $X$ has dimension $m=2s+3c$ and is compact except if $B \cong \M_2(F)$, in which case we write instead $X(\Gamma)=\Gamma \backslash \calH^*$ where $\calH^*=\calH \cup \PP^1(F)$.  If $\Gamma$ is torsion free, then $\Gamma$ acts freely on $\calH$ and $X=\Gamma \backslash \calH$ is a Riemannian manifold.

The heart of the construction of Vign\'eras is to compare such quotients arising from orders that are locally isomorphic but not isomorphic.  This situation is most compactly presented by employing adelic notation.  Let $\Zhat = \varprojlim_{n} \Z/n\Z$ be the profinite completion of $\Z$.  We endow adelic sets with hats, and where applicable we denote by $\widehat{\phantom{x}}$ the tensor product with $\Zhat$ over $\Z$.  For example, the ring of adeles of $F$ is $\Fhat=F \otimes_{\Q} \Qhat = F \otimes_{\Z} \Zhat$.  

The group $\Gamma$ is \defi{maximal} if it is maximal under inclusion.  In this paper we pursue the simplest examples and so we will be consider the class of maximal groups.  However, in this section, we find it natural to work with a larger class of groups: $\Gamma$ is a \defi{congruence} subgroup if $\Gamma$ contains a \defi{principal congruence subgroup} 
\[ \Gamma(\frakN)=\{\gamma \in \calO(1) : \nrd(\gamma)=1 \text{ and } \gamma \equiv 1 \psmod{\frakN}\} \leq \calO(1)^\times \]
for an ideal $\frakN \subseteq \Z_F$.  

Congruence groups are characterized by their idelic images as follows.  Let $\Gammahat$ be the closure of $\Gamma$ with respect to the topology on $\Bhat^\times$, where a fundamental system of neighborhoods of $1$ is given by the images of the principal congruence subgroups $\widehat{\Gamma(\frakN)}$ for an ideal $\frakN \subseteq \Z_F$.

\begin{lem} \label{lem:congclos}
$\Gammahat \cap B^\times = \Gamma$ if and only if $\Gamma$ is congruence.
\end{lem}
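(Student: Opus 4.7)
The plan is to exploit two topological facts: the subgroups $\widehat{\Gamma(\frakN)}$ form a neighborhood basis at $1$ in $\widehat{B}^\times$, and any open subgroup of a topological group is automatically closed. As a preliminary, I would first record that $\widehat{\Gamma(\frakN)} \cap B^\times = \Gamma(\frakN)$ for every ideal $\frakN$. The inclusion $\supseteq$ is immediate; for the reverse, since reduced norm and reduction modulo $\frakN$ are continuous, the subgroup $U(\frakN) := \{\widehat{\alpha} \in \widehat{\calO(1)}^\times : \nrd(\widehat{\alpha}) = 1,\ \widehat{\alpha} \equiv 1 \psmod{\frakN}\}$ is a closed subset of $\widehat{B}^\times$ containing $\Gamma(\frakN)$, and hence contains $\widehat{\Gamma(\frakN)}$; intersecting with $B^\times$ recovers $\Gamma(\frakN)$.

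For the ($\Leftarrow$) direction, assume $\Gamma \supseteq \Gamma(\frakN)$. Then $\widehat{\Gamma}$ contains the open subgroup $\widehat{\Gamma(\frakN)}$, so $\widehat{\Gamma}$ is itself open (and closed). By density of $\Gamma$ in $\widehat{\Gamma}$, every coset of $\widehat{\Gamma(\frakN)}$ sitting inside $\widehat{\Gamma}$ meets $\Gamma$, so $\widehat{\Gamma} = \Gamma \cdot \widehat{\Gamma(\frakN)}$. Given $b \in \widehat{\Gamma} \cap B^\times$, write $b = \gamma\delta$ with $\gamma \in \Gamma$ and $\delta \in \widehat{\Gamma(\frakN)}$; then $\delta = \gamma^{-1}b \in B^\times \cap \widehat{\Gamma(\frakN)} = \Gamma(\frakN) \subseteq \Gamma$, whence $b \in \Gamma$.

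For the ($\Rightarrow$) direction, assume $\widehat{\Gamma} \cap B^\times = \Gamma$. Commensurability of $\Gamma$ with $\calO(1)^\times$ propagates to the closures, so $\widehat{\Gamma}$ has finite index in the closure of $\calO(1)^\times$, which sits inside the compact open subgroup $\widehat{\calO(1)}^\times \subset \widehat{B}^\times$. Thus $\widehat{\Gamma}$ is an open subgroup of $\widehat{B}^\times$, and by the definition of the topology it contains $\widehat{\Gamma(\frakN)}$ for some ideal $\frakN$. Intersecting with $B^\times$ gives $\Gamma = \widehat{\Gamma} \cap B^\times \supseteq \widehat{\Gamma(\frakN)} \cap B^\times = \Gamma(\frakN)$, so $\Gamma$ is congruence.

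The most delicate step, I expect, is establishing openness of $\widehat{\Gamma}$ in the ($\Rightarrow$) direction: commensurability with $\calO(1)^\times$ combined with the compactness of $\widehat{\calO(1)}^\times$ is what forces $\widehat{\Gamma}$ to already contain a basic open neighborhood of $1$, without having to know a priori that any such neighborhood is available inside $\Gamma$. A minor bookkeeping subtlety is that $\Gamma \leq \opP B_+^\times$, so to parse ``$\widehat{\Gamma} \cap B^\times$'' unambiguously one should implicitly replace $\Gamma$ by its preimage in $B^\times$ throughout.
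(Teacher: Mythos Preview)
Your proof is correct and follows the same route as the paper, which argues in one stroke that $\widehat{\Gamma}$ is compact open (being closed and commensurable with the basic open neighborhood $\widehat{\Gamma(1)}$), so that $\overline{\Gamma}=\widehat{\Gamma}\cap B^\times$ is the smallest congruence group containing $\Gamma$; your $(\Leftarrow)$ direction simply unpacks the density/coset argument the paper leaves implicit. One small tightening in your $(\Rightarrow)$ direction: merely sitting \emph{inside} the compact open $\widehat{\calO(1)}^\times$ does not by itself force $\widehat{\Gamma}$ to be open --- what you need is that the closure of $\calO(1)^\times$ contains the basic open $\widehat{\Gamma(1)}$ (since $\Gamma(1)\subseteq\calO(1)^\times$) and is therefore itself open, whence your closed finite-index subgroup $\widehat{\Gamma}$ is open as well. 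This is exactly the step the paper compresses into its first sentence.
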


\begin{proof}
The group $\Gammahat$ is a closed subgroup of $\Bhat^\times$ commensurable with the compact open subgroup $\widehat{\Gamma(1)}$ and so $\Gammahat$ is also compact open.  Let $\overline{\Gamma}=\Gammahat \cap B^\times$.  Then by the topology on $\Bhat^\times$, $\overline{\Gamma}$ is the smallest congruence group containing $\Gamma$.  So $\Gamma$ is congruence if and only if $\Gamma=\overline{\Gamma}$.
\end{proof}

It follows from Lemma \ref{lem:congclos} that if $\Gamma$ is maximal, then $\Gamma$ is congruence.  Accordingly, for the rest of this section, we assume that $\Gamma$ is congruence.  (One can always replace a general group $\Gamma$ with its \defi{congruence closure} $\overline{\Gamma}=\widehat{\Gamma} \cap B^\times$.)

\begin{defn} \label{def:samegenus}
Two groups $\Gamma,\Gamma' \leq B^\times$ are \defi{everywhere locally conjugate} if $\Gammahat,\Gammahat'$ are conjugate in $\Bhat^\times$.  The \defi{genus} of $\Gamma \leq B^\times$ is the set of groups $\Gamma' \leq B^\times$ locally conjugate to $\Gamma$; we therefore will also say that $\Gamma,\Gamma'$ are \defi{in the same genus} if $\Gamma,\Gamma'$ are locally conjugate.
\end{defn}

Consider the double coset
\[ X_{\textup{Sh}}=X_{\textup{Sh}}(\Gamma)=B_+^\times \backslash (\calH \times \Bhat^\times / \Gammahat) \]
where $B_+^\times$ acts on $\calH$ via $\iota$ and on $\Bhat^\times/\Gammahat$ by left multiplication via the diagonal embedding.  We have a natural (continuous) projection map
\begin{equation} \label{projmap}
X_{\textup{Sh}} \to B_+^\times \backslash \Bhat^\times / \Gammahat.
\end{equation}
We suppose now and throughout that $s+c>0$, so $B$ has at least one split real or (necessarily split) complex place; this hypothesis is also known as the \defi{Eichler condition}.  (In the excluded case $s+c=0$, the field $F$ is totally real, $B$ is a totally definite quaternion algebra over $F$, and the associated variety $X_{\textup{Sh}}$ is a finite set of points.)  Under this hypothesis, by strong approximation \cite[Theor\`eme III.4.3]{vigneras-book}, the reduced norm gives a bijection
\begin{equation} \label{strongapprox}
\nrd:B_+^\times \backslash \Bhat^\times / \Gammahat \xrightarrow{\sim} F_+^\times \backslash \Fhat^\times/\!\nrd(\Gammahat) = \Cl_\Gamma
\end{equation}
where $\Cl_\Gamma$ is a class group of $F$ associated to $\Gamma$.  The group $\Cl_\Gamma$ is a finite abelian group that surjects onto the strict class group $\Cl^+ \Z_F$, the ray class group of $\Z_F$ with modulus equal to the product of all real (infinite) places of $F$; by class field theory, $\Cl^+ \Z_F$ is the Galois group of the maximal abelian extension of $F$ which is unramified at all finite places.

Thus, the space $X_{\textup{Sh}}$ is the disjoint union of Riemannian orbifolds of dimension $m=2s+3c$ indexed by $\Cl_\Gamma$, which we identify explicitly as follows.  Let the ideals $\frakb \subseteq \Z_F$ form a set of representatives for $\Cl_\Gamma$, let $\bhat = \frakb \otimes_{\Z} \Zhat$, and let $\betahat \in \ZFhat$ generate $\bhat$ so $\widehat{b} \ZFhat \cap \Z_F = \frakb$.  For simplicity, choose $\frakb=\Z_F$ and $\widehat{b}=\widehat{1}$ for the representatives of the trivial class.  
By strong approximation (\ref{strongapprox}), there exists $\betahat \in \Bhat^\times$ such that $\nrd(\betahat)=\widehat{b}$.  Therefore
\begin{equation} \label{breakup}
 X_{\textup{Sh}} = \bigsqcup_{[\frakb] \in \Cl_\Gamma} B_+^\times (\calH \times \betahat \Gammahat).
\end{equation}
We have a map
\begin{align*} 
B_+^\times(\calH \times \betahat \Gammahat) &\to X(\Gamma_{\frakb})=\Gamma_\frakb \backslash \calH \\
(z,\betahat \Gammahat) &\mapsto z
\end{align*}
where $\Gamma_\frakb=\betahat \Gammahat \betahat^{-1} \cap B$; now each $X(\Gamma_\frakb)$ is a connected orbifold of dimension $m$.  We abbreviate $X(\Gamma)=X(\Gamma_{(1)})$ for the trivial class.  Putting these together, we have an isomorphism of orbifolds
\begin{equation} \label{decomp}
X_{\textup{Sh}}(\Gamma) = \bigsqcup_{[\frakb] \in \Cl_\Gamma} X(\Gamma_{\frakb}),
\end{equation}
i.e., a homeomorphism of the underlying topological spaces that preserves the orbifold structure.  When $c=0$, i.e., $F$ is totally real, then $X_{\textup{Sh}}$ can be given the structure of an algebraic variety defined over a number field, and in this case we call $X_{\textup{Sh}}$ the \defi{quaternionic Shimura variety} associated to $B$ of \defi{level} $\Gamma$.  

Having made this decomposition, will see in Section 2, following Vign\'eras, that the connected orbifolds $X(\Gamma_\frakb)$ in (\ref{decomp}) are in some cases isospectral and nonisometric; and to some extent, it is exactly the fact that these connected orbifolds arise naturally together that makes them candidates to form an isospectral-nonisometric pair.  To that end, we now turn to a class of these orbifolds which will be the primary objects of interest.

\subsection*{Eichler orders and normalizers}

Let $\frakN$ be an ideal of $\Z_F$ coprime to the discriminant $\frakD$ and let $\calO=\calO_0(\frakN) \subseteq \calO_0(1)$ be an \defi{Eichler order of level} $\frakN$, consisting of elements which are upper triangular modulo $\frakN$ with respect to an embedding $\calO \hookrightarrow \calO \otimes_{\Z_F} \Z_{F,\frakN}$, where $\Z_{F,\frakN}$ is the completion of $\Z_F$ at $\frakN$.  Then the discriminant of $\calO$ is equal to $\frakd=\frakD\frakN$.  From the local description, we see that 
\[ \nrd(\widehat{\calO}^\times) = \widehat{\Z}_{F}^\times. \] 
Let 
\[ \calO_1^\times=\{\gamma \in \calO : \nrd(\gamma)=1\} \]
be the group of units of $\calO$ of reduced norm $1$; further, let
\[ N(\calO) = \{\alpha \in B^\times : \alpha^{-1} \calO \alpha \subseteq \calO \} \]
be the normalizer of $\calO$ in $B^\times$ and let $N(\calO)_+ = N(\calO) \cap B_+^\times$.  In this situation, we define three arithmetic groups and their quotients as follows:
\begin{align}\label{align:arithmeticgroups}
\Gamma^1=\Gamma_0^1(\frakN) &= \calO_1^\times & X^1 &= X_0^1(\frakN) = X(\Gamma^1) \notag  \\
\Gamma^+=\Gamma_0^+(\frakN) &= \calO_+^\times/\Z_F^\times & X^+ &= X_0^+(\frakN) = X(\Gamma^+) \\
\Gamma^*=\Gamma_0^*(\frakN) &= N(\calO)_+/F^\times & X^* &= X_0^*(\frakN) = X(\Gamma^*). \notag
\end{align}
We identify these groups with their images in $\PGL^+_2(\R)^s \times \PSL_2(\C)^c$ via $\iota$, as before.

We have the inclusions
\[ \Gamma^1 \leq \Gamma^+ \leq \Gamma^* \]
each with finite index, and hence finite surjective morphisms
\begin{equation} \label{finitemorph}
 X^1 \to X^+ \to X^*.
\end{equation}

\begin{rmk}
When $s > 1$, one can consider slightly larger orientation-preserving groups by requiring not that the elements $\gamma$ are totally positive but rather that $\N_{F/\Q}(\nrd \gamma)>0$; this class makes for more complicated notation, but can be defined analogously.  In this paper, we have $s \leq 1$, so this more nuanced class of groups does not arise.  Similarly, if $s>1$ or $c>1$, then the full (orientation-preserving) isometry group of $\calH$ contains maps permuting the factors; for the same reason, these groups do not arise here.  More general groups may still give interesting examples in other contexts.
\end{rmk}

An arithmetic Fuchsian group $\Gamma$ is \defi{maximal} if it is not properly contained in another (arithmetic) Fuchsian group.  Borel characterized the maximal arithmetic groups in terms of the above data as follows.

\begin{thm}[{Borel \cite[Proposition 4.4]{borel-commensurability}}] \label{Borel}
If an arithmetic subgroup of $\PSL_2(\R)^s \times \PSL_2(\C)^c$ is maximal, then it is isomorphic to a group of the form $\Gamma^*=\Gamma_0^*(\frakN)$ associated to an Eichler order of squarefree level $\frakN$.
\end{thm}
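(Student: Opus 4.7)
The plan is to give an adelic proof following Borel, using the structure of maximal compact-open subgroups of $\widehat{B}^\times/\widehat{F}^\times$ place by place. Let $\Gamma$ be a maximal arithmetic subgroup of $\PSL_2(\R)^s \times \PSL_2(\C)^c$; by definition it is commensurable with $\opP\!\calO(1)^\times$ for some maximal order $\calO(1)$ in a quaternion algebra $B$ over a number field $F$ with the prescribed signature. First I would invoke the commensurator theorem (Margulis, or the direct quaternionic computation) to show that the full commensurator of $\Gamma$ in the ambient Lie group is exactly $\iota(\opP\!B_+^\times)$, so that $\Gamma$ is automatically contained in $\opP\!B_+^\times$. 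Because $\Gamma$ is maximal and thus a fortiori congruence, Lemma \ref{lem:congclos} applies and gives $\Gamma = \widehat{\Gamma} \cap B^\times$, where $\widehat{\Gamma} \subseteq \widehat{B}^\times$ is compact open. The maximality of $\Gamma$ then translates, via the order-reversing correspondence $\Gamma \mapsto \widehat{\Gamma}$ between congruence subgroups and compact open subgroups, into the statement that the image of $\widehat{\Gamma}$ in $\widehat{B}^\times/\widehat{F}^\times$ is a maximal compact open subgroup among those meeting $\widehat{B}_+^\times$.

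Next I would decompose this compact open subgroup place by place and classify maximal compact open subgroups of $B_v^\times/F_v^\times$ for each finite place $v$. At a place $v$ where $B$ ramifies, $B_v$ is a local division algebra, its valuation ring is the unique maximal order $\calO_v$, and $\calO_v^\times/\Z_{F,v}^\times$ is the unique maximal compact subgroup of $B_v^\times/F_v^\times$. At a split place $v$, I would invoke the action of $\PGL_2(F_v)$ on its Bruhat--Tits tree: the maximal compact subgroups are precisely the stabilizers of either a single vertex (giving the normalizer of a maximal order, i.e.\ a conjugate of $\PGL_2(\Z_{F,v})$) or of an unoriented edge (giving the normalizer in $\PGL_2(F_v)$ of an Eichler order of level $v$, the $\Z/2\Z$-extension of the pointwise edge stabilizer by the Atkin--Lehner swap of the two endpoints). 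No other maximal compact subgroups arise, and in particular one \emph{cannot} stabilize a length-two path without strictly enlarging: this is precisely the source of the squarefreeness of $\frakN$.

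Assembling the local pictures globally, the maximal compact open subgroup defining $\widehat{\Gamma}$ is exactly the image of $\widehat{N(\calO)}$ in $\widehat{B}^\times/\widehat{F}^\times$, where $\calO$ is an Eichler order whose level $\frakN$ is squarefree and coprime to $\frakD$ (and where, at each prime dividing $\frakN$, we have selected the edge-stabilizer option). Intersecting back with $B^\times$ and imposing the totally positive reduced norm condition forced by the ambient orientation-preserving group yields $\Gamma = N(\calO)_+ / F^\times = \Gamma_0^*(\frakN)$, as desired.

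The main obstacle is the local classification at split finite places: one must carefully use the Bruhat--Tits tree to list \emph{all} maximal compact subgroups of $\PGL_2(F_v)$, verify that the edge-with-swap stabilizer is indeed maximal (i.e.\ is not contained in any vertex stabilizer), and confirm that higher-level Eichler orders fail to yield maximal groups because their normalizers embed into strictly larger compact open subgroups. Once this local fact is in hand, the patching argument via strong approximation and Lemma \ref{lem:congclos} is formal, and the conclusion that the level is squarefree and coprime to $\frakD$ drops out of the local analysis.
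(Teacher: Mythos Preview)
The paper does not prove this theorem; it is stated with a citation to Borel \cite[Proposition 4.4]{borel-commensurability} and then used as a black box. So there is no proof in the paper to compare against.

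Your sketch is essentially Borel's own argument, and the local classification via the Bruhat--Tits tree is exactly the right mechanism for producing the squarefree level. One point deserves more care: the passage from ``$\Gamma$ is maximal among arithmetic subgroups'' to ``$\widehat{\Gamma}$ is maximal compact open in $\widehat{B}^\times/\widehat{F}^\times$'' is not a formal consequence of an order-reversing correspondence, because the map $K \mapsto K \cap B_+^\times$ from compact open subgroups to congruence subgroups need not be injective. What Borel actually shows is slightly different: if $\widehat{\Gamma}$ were \emph{not} locally maximal at some place, one could enlarge it there to get a strictly larger compact open $K$, and then $K \cap B_+^\times$ would be a strictly larger arithmetic group (here strong approximation is used to see the enlargement is genuinely detected globally). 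This is the step where the Eichler condition and strong approximation do real work, so you should make that explicit rather than calling the patching ``formal.'' With that caveat, your outline is correct and matches the standard proof.
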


We will need to be quite explicit about the morphisms in (\ref{finitemorph}).  From now on, let $\calO \subset B$ be an Eichler order of level $\frakN$.  

\begin{defn}
An arithmetic group $\Gamma$ is \defi{unitive} if $\Gamma^1 \leq \Gamma \leq \Gamma^+$ for $\Gamma^1,\Gamma^+$ arising from an Eichler order.
\end{defn}

\begin{lem} \label{lem:gamma1*}
The subgroup $\Gamma^1 \trianglelefteq \Gamma^+$ is normal with quotient
\[ \Gamma^+/\Gamma^1 \cong \Z_{F,+}^\times/\Z_F^{\times 2} \]
an elementary abelian $2$-group; the rank of this group is at most $r+c$, and it is at most $r+c-1$ if $F$ has a real place.  If further $F$ is totally real, then 
\[ \Z_{F,+}^\times/\Z_F^{\times 2} \cong \Cl^+(\Z_F)/\Cl(\Z_F). \]
\end{lem}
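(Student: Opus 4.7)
The plan is to identify $\Gamma^+/\Gamma^1$ with $\Z_{F,+}^\times/\Z_F^{\times 2}$ via the reduced norm, then read off the remaining assertions from standard unit-theoretic calculations.

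First, for normality: given $\alpha \in \calO_+^\times \subseteq \calO^\times$ and $\gamma \in \calO_1^\times$, the element $\alpha\gamma\alpha^{-1}$ lies in $\calO$ (since $\alpha^{-1} \in \calO$) and has reduced norm $1$, so $\calO_1^\times \trianglelefteq \calO_+^\times$; modding out by the central subgroup $\Z_F^\times$ gives $\Gamma^1 \trianglelefteq \Gamma^+$. The reduced norm $\nrd \colon \calO_+^\times \to \Z_{F,+}^\times$ squares $\Z_F^\times$, so it descends to $\overline{\nrd} \colon \Gamma^+ \to \Z_{F,+}^\times/\Z_F^{\times 2}$. Its kernel is $\Gamma^1$: if $\nrd(\alpha) = u^2$ for some $u \in \Z_F^\times$, then $\alpha u^{-1} \in \calO_1^\times$, whence $\alpha \in \calO_1^\times \Z_F^\times$; the reverse inclusion is immediate. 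The main obstacle is surjectivity, for which I would invoke the Eichler norm theorem together with strong approximation \cite[Th\'eor\`eme III.4.3]{vigneras-book}: since $\nrd(\widehat{\calO}^\times) = \widehat{\Z}_F^\times$ (noted in the excerpt) and $\nrd(B^\times)$ consists of elements of $F^\times$ positive at all ramified real places, one deduces that $\nrd(\calO^\times)$ is the group of units of $\Z_F$ positive at every ramified real place; restricting to $\calO_+^\times$ imposes positivity at the remaining real places, giving $\nrd(\calO_+^\times) = \Z_{F,+}^\times$.

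The quotient $\Z_{F,+}^\times/\Z_F^{\times 2}$ is manifestly elementary abelian of exponent $2$. For the rank, Dirichlet's unit theorem gives $\Z_F^\times \cong \mu(F) \times \Z^{r+c-1}$; since $\{\pm 1\} \subseteq \mu(F)$ one has $\mu(F)/\mu(F)^2 \cong \Z/2\Z$, and so $\Z_F^\times/\Z_F^{\times 2} \cong (\Z/2\Z)^{r+c}$. The subgroup $\Z_{F,+}^\times/\Z_F^{\times 2}$ therefore has rank at most $r+c$. If $F$ has a real place, then $-1 \in \Z_F^\times \setminus \Z_{F,+}^\times$, so the inclusion is proper and the rank drops to at most $r+c-1$.

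Finally, in the totally real case ($c=0$), I would identify the two groups by comparing orders. The exact sequence $1 \to P(\Z_F)/P^+(\Z_F) \to \Cl^+(\Z_F) \to \Cl(\Z_F) \to 1$ together with the sign-map identification $P(\Z_F)/P^+(\Z_F) \cong \{\pm 1\}^r/\sgn(\Z_F^\times)$ (from weak approximation) yields $|\Cl^+(\Z_F)/\Cl(\Z_F)| = 2^r/[\Z_F^\times : \Z_{F,+}^\times]$; meanwhile, the sequence $1 \to \Z_{F,+}^\times/\Z_F^{\times 2} \to \Z_F^\times/\Z_F^{\times 2} \xrightarrow{\sgn} \sgn(\Z_F^\times) \to 1$ gives $|\Z_{F,+}^\times/\Z_F^{\times 2}| = 2^r/[\Z_F^\times : \Z_{F,+}^\times]$ as well. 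Both groups being elementary abelian $2$-groups of the same order, they must be isomorphic.
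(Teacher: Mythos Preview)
Your proof is correct and follows the same approach as the paper: the paper's proof consists of the single sentence ``By strong approximation, the map $\nrd:\calO_+^\times \to \Z_{F,+}^\times$ is surjective, so the map $\Gamma^+ \to \Z_{F,+}^\times/\Z_F^{\times 2}$ is surjective with kernel identified with $\Gamma^1$,'' leaving the rank bound and the totally real identification as unstated exercises. You have supplied exactly those details---the Dirichlet-unit count for the rank, the observation that $-1$ witnesses the drop when $r>0$, and the comparison of orders via the exact sequences $1 \to P/P^+ \to \Cl^+ \to \Cl \to 1$ and $1 \to \Z_{F,+}^\times/\Z_F^{\times 2} \to \Z_F^\times/\Z_F^{\times 2} \to \sgn(\Z_F^\times) \to 1$---all of which are correct.
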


\begin{proof}
By strong approximation, the map $\nrd:\calO_+^\times \to \Z_{F,+}^\times$ is surjective, so the map $\Gamma^+ \to \Z_{F,+}^\times/\Z_F^{\times 2}$ is surjective with kernel identified with $\Gamma^1$.
\end{proof}

The normalizer $N(\calO)$ and its corresponding arithmetic group $\Gamma^*$ are a bit more complicated but will feature prominently in our analysis of $2$-orbifolds.  An element $\alpha \in B^\times$ belongs to the normalizer $\alpha \in N(\calO)$ if and only if $\alpha \calO=\calO \alpha$ if and only if $\calO \alpha \calO$ is a principal two-sided fractional $\calO$-ideal.  Indeed, the map from the normalizer $N(\calO)$ to the set of principal two-sided $\calO$-ideals given by $\alpha \mapsto \calO \alpha \calO$ is surjective with kernel $\calO^\times$; the map restricted to elements with positive reduced norm has kernel $\calO_+^\times$.  
We describe the principal two-sided ideals by describing the locally principal ideals and identifying the corresponding ideal classes by strong approximation.  We write $\fraka \parallel \frakd$, and say $\fraka$ is a \defi{unitary divisor} of $\frakd$, if $\fraka \mid \frakd$ and $\fraka$ is prime to $\frakd\fraka^{-1}$.  

\begin{prop} \label{twosided}
Let $J$ be a locally principal two-sided fractional ideal.  Then $J$ is a product of an ideal $\frakc \calO$, with $\frakc$ a fractional ideal of $\Z_F$, and two-sided $\calO$-ideals of the form
\[ [\calO,\calO] + \frakp^e \calO \]
with $\frakp^e \parallel \frakd$ a prime power, where $[\,,\,]$ is the commutator.
\end{prop}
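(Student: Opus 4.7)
The plan is a local-to-global argument: the locally principal two-sided fractional $\calO$-ideal $J$ is determined by its completions $J_\frakp=J\otimes_{\Z_F}\Z_{F,\frakp}$, which differ from $\calO_\frakp$ at only finitely many primes. Accordingly, I would classify the locally principal two-sided $\calO_\frakp$-ideals at each prime $\frakp$ of $\Z_F$ and then identify global generators that realize the local generators.

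The local classification splits into three cases. If $\frakp\nmid\frakd$, then $\calO_\frakp\cong\M_2(\Z_{F,\frakp})$ is a maximal order and its (all) two-sided ideals are the powers $\frakp^n\calO_\frakp$ for $n\in\Z$. If $\frakp\mid\frakD$, then $\calO_\frakp$ is the maximal order in the local quaternion division algebra, with a unique maximal two-sided ideal $\frakP_\frakp$ satisfying $\frakP_\frakp^2=\frakp\calO_\frakp$, so every two-sided ideal is $\frakp^n\frakP_\frakp^{\epsilon}$ for $n\in\Z$ and $\epsilon\in\{0,1\}$. If $\frakp^e\parallel\frakN$, then the local Eichler order $\calO_\frakp$ is normalized by an Atkin--Lehner element $w_\frakp=\left(\begin{smallmatrix}0 & 1\\ \pi_\frakp^e & 0\end{smallmatrix}\right)$ with $w_\frakp^2=\pi_\frakp^e$, and the group of locally principal two-sided ideals is generated by $\frakp\calO_\frakp$ and $\calO_\frakp w_\frakp$, the latter squaring to $\frakp^e\calO_\frakp$.

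Next I would identify the global ideals $[\calO,\calO]+\frakp^e\calO$ for $\frakp^e\parallel\frakd$ as realizing precisely the Atkin--Lehner generator locally. At primes $\frakq\neq\frakp$ one has $\frakp^e\calO_\frakq=\calO_\frakq$, so $([\calO,\calO]+\frakp^e\calO)_\frakq=\calO_\frakq$ is trivial. At $\frakp$ itself, a direct local computation--writing $\calO_\frakp=\Z_{K_\frakp}\oplus\Z_{K_\frakp}j$ with $j^2\in\pi_\frakp\Z_{F,\frakp}^\times$ in the division case, or working in explicit upper-triangular matrix coordinates in the Eichler case--shows that $[\calO_\frakp,\calO_\frakp]+\frakp^e\calO_\frakp$ equals the claimed Atkin--Lehner ideal. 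Combined with the local classification, this shows that any locally principal two-sided $\calO$-ideal $J$ decomposes as $J=\frakc\calO\cdot\prod_{\frakp^e\parallel\frakd}\bigl([\calO,\calO]+\frakp^e\calO\bigr)^{\epsilon_\frakp}$, where $\epsilon_\frakp\in\{0,1\}$ records whether $J_\frakp$ involves the Atkin--Lehner generator and the fractional $\Z_F$-ideal $\frakc$ records the remaining $\frakp$-adic exponents of the $\frakp\calO_\frakp$-part.

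The main obstacle is the explicit local identification of $[\calO_\frakp,\calO_\frakp]+\frakp^e\calO_\frakp$ at the primes dividing $\frakd$. In the Eichler case this is a short matrix calculation, using that commutators involving the elementary matrices $E_{12}$ and $\pi_\frakp^e E_{21}$ generate the off-diagonal entries of the Atkin--Lehner ideal while $[E_{12},\pi_\frakp^eE_{21}]=\pi_\frakp^e(E_{11}-E_{22})$ together with $\frakp^e\calO_\frakp$ supplies the diagonal. In the division case, one uses that $[\alpha,j]=(\alpha-\overline{\alpha})j$ lies in $j\calO_\frakp=\frakP_\frakp$, and that choosing $\alpha\in\Z_{K_\frakp}$ with $\alpha-\overline{\alpha}$ a unit shows $\frakP_\frakp\subseteq[\calO_\frakp,\calO_\frakp]+\frakp\calO_\frakp$, with the reverse containment following from $[\calO_\frakp,\calO_\frakp]\subseteq\frakP_\frakp$ modulo $\frakp\calO_\frakp=\frakP_\frakp^2$.
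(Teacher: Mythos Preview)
Your proposal is correct and follows exactly the approach the paper has in mind: the paper's own proof consists of the single sentence ``The proof is direct; see the work of Kirschmer and Voight [\S 3], for example,'' and your local-to-global argument---classifying the locally principal two-sided ideals prime by prime and then identifying $[\calO,\calO]+\frakp^e\calO$ with the Atkin--Lehner generator at $\frakp$---is precisely the content of that reference. Your explicit local computations (the matrix calculation at Eichler primes and the $\Z_{K_\frakp}\oplus\Z_{K_\frakp}j$ model at ramified primes) are correct as stated.
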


\begin{proof}
The proof is direct; see the work of Kirschmer and Voight \cite[\S 3]{KirschmerVoight}, for example.
\end{proof}

The set 
\[ \{ \fraka \parallel \frakd : [\fraka] \in (\Cl^+ \Z_F)^2\}  \]
of unitary divisors of $\frakd=\frakD\frakN$ which are squares in $\Cl^+ \Z_F$ can be given the structure of an elementary abelian $2$-group by considering it as a subgroup of $\bigoplus_{\frakp^e \parallel \frakd} \Z/2\Z$ in the natural way.  

We also define
\[ (\Cl \Z_F)[2]_+ = \{[\frakc] \in (\Cl \Z_F)[2] : \text{$[\frakc]$ lifts to an element of order $2$ in $\Cl^+ \Z_F$}\} \leq (\Cl \Z_F)[2]. \]
For example, if $\Cl^+ \Z_F = \Cl \Z_F$, then $(\Cl \Z_F)[2]_+ = (\Cl \Z_F)[2]$.  The subgroup $(\Cl \Z_F)[2]_+$ is the largest subgroup of $(\Cl \Z_F)[2]$ that splits as a direct summand in the natural projection map $\Cl^+ \Z_F \to \Cl \Z_F$.  

\begin{prop} \label{corgamma}
The subgroup $\Gamma^+ \trianglelefteq \Gamma^*$ is normal with quotient 
\[ \Gamma^*/\Gamma^+ \cong \{ \fraka \parallel \frakd : [\fraka] \in (\Cl^+ \Z_F)^2\} \times 
(\Cl \Z_F)[2]_+
\]
an elementary abelian $2$-group with rank at most $\omega(\frakd)+h_2$ where $\omega(\frakd)=\#\{\frakp : \frakp \mid 
\frakd\}$ and $h_2=\rk (\Cl \Z_F)[2]$.  

The quotient $\Gamma^*/\Gamma^1$ is an elementary abelian $2$-group of rank at most $r+c+\omega(\frakd) + h_2$ in general and $r-1+\omega(\frakd)+h_2$ if $F$ is totally real.
\end{prop}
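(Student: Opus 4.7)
The plan is to build a homomorphism $\Phi \colon \Gamma^* \to \{\fraka \parallel \frakd\} \times \Cl \Z_F$ using Proposition \ref{twosided} and identify its kernel with $\Gamma^+$. Normality of $\Gamma^+ \trianglelefteq \Gamma^*$ is immediate since $N(\calO)_+$ normalizes both $\calO$ and the totally positive reduced norm condition. Given $\alpha \in N(\calO)_+$, the two-sided $\calO$-ideal $\calO \alpha \calO$ factors uniquely (by Proposition \ref{twosided}) as $\frakc \calO \cdot \prod_{\frakp^e \parallel \frakd} J_\frakp^{n_\frakp}$ with $\frakc$ a fractional $\Z_F$-ideal and $n_\frakp \in \{0,1\}$; setting $\fraka = \prod_{n_\frakp = 1} \frakp^{e_\frakp}$, I define $\Phi(\alpha) = (\fraka, [\frakc])$. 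Multiplying $\alpha$ by $c \in F^\times$ scales $\frakc$ by the principal ideal $(c)$, and multiplying by $u \in \calO_+^\times$ preserves $\calO \alpha \calO$, so $\Phi$ descends to $\Gamma^*/\Gamma^+$.

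The structural constraint on the image follows from taking reduced norms: using $\nrd(J_\frakp) = \frakp^{e_\frakp}$ and $\nrd(\frakc \calO) = \frakc^2$, one obtains $\nrd(\alpha)\Z_F = \fraka \frakc^2$, and total positivity of $\nrd(\alpha)$ forces $[\fraka] = [\frakc]^{-2}$ in $\Cl^+ \Z_F$. In particular $[\fraka] \in (\Cl^+ \Z_F)^2$; and for fixed such $\fraka$, the set of permissible $[\frakc] \in \Cl \Z_F$ is a coset of the image of $(\Cl^+ \Z_F)[2] \to \Cl \Z_F$, which by definition is $(\Cl \Z_F)[2]_+$. Squaring an element: since $(\calO \alpha \calO)^2 = \fraka \frakc^2 \calO$ has trivial Atkin-Lehner part, $\Phi(\alpha)^2 = (1, [\fraka \frakc^2]) = (1,1)$ by the constraint, so the image is elementary abelian of exponent $2$, matching the displayed product on the right after a (non-canonical) choice of section. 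Surjectivity comes from strong approximation (valid under the Eichler condition $s+c>0$): given $(\fraka, [\frakc])$ satisfying the constraint, pick a totally positive generator of $\fraka \frakc^2$ and assemble a suitable $\alpha \in N(\calO)_+$ with the prescribed local normalizer behavior at the primes dividing $\frakd$. Injectivity: $\Phi(\alpha) = 1$ forces $\calO \alpha \calO = (\delta)\calO$ for $\delta \in F^\times$, whence $\alpha \in F^\times \cdot \calO_+^\times$.

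For the second claim, combine the first part with Lemma \ref{lem:gamma1*} via the short exact sequence $1 \to \Gamma^+/\Gamma^1 \to \Gamma^*/\Gamma^1 \to \Gamma^*/\Gamma^+ \to 1$. The extension is central since conjugation preserves reduced norm. To check the exponent remains $2$, lift $\gamma \in \Gamma^*$ to $\alpha \in N(\calO)_+$ and rescale so $\alpha \in \calO$ (using an integral representative of the class $[\frakc]$); then $\calO \alpha^2 \calO = \delta \calO$ for a totally positive generator $\delta$, giving $\alpha^2 = \delta u$ with $u \in \calO_+^\times$ and $\nrd(u) = (\nrd(\alpha)/\delta)^2$. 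Since $\nrd(\alpha)/\delta \in \Z_F^\times$, extracting $v \in \Z_F^\times$ with $v^2 = \nrd(u)$ yields $\alpha^2 = (\delta v)(u/v) \in F^\times \cdot \calO_1^\times$, so $\gamma^2 \in \Gamma^1$. The rank bounds then follow by summing the bounds from Lemma \ref{lem:gamma1*} and the first half of the proposition. The main technical obstacle is the class-group bookkeeping---isolating $(\Cl \Z_F)[2]_+$ as precisely the image of $(\Cl^+ \Z_F)[2]$---together with the strong approximation input needed for surjectivity, both of which rely on a careful local analysis of the normalizer at primes dividing $\frakd$.
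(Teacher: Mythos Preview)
Your argument is correct and matches the paper's proof in substance. The only presentational difference is that you package the two invariants $(\fraka,[\frakc])$ into a single map $\Phi$ and then identify its image as (non-canonically) the displayed product, whereas the paper first maps $N(\calO)_+$ onto the factor $\{\fraka \parallel \frakd : [\fraka]\in(\Cl^+\Z_F)^2\}$ via $\alpha\mapsto\fraka$ and then analyzes the kernel modulo $F^\times\calO_+^\times$ to recover $(\Cl\Z_F)[2]_+$; the exponent-$2$ computation for $\Gamma^*/\Gamma^1$ is identical in both.
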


\begin{proof}
Let $\alpha \in N(\calO)_+$.  From Proposition \ref{twosided} we have $(\calO \alpha \calO)^2 = \calO \alpha^2 \calO = a\calO$ with $a=\nrd(\alpha)$ so $\alpha^2 = a \gamma$ with $\gamma \in \calO^\times$; taking norms shows that $\gamma$ is totally positive.  For the second statement, we note further that $\nrd(\gamma)=\nrd(\alpha)^2/a^2=u^2$ so $\alpha^2=(au)(\gamma/u) \in F^\times \Gamma^1$ as claimed.  Thus $\Gamma^*/\Gamma^1$ and $\Gamma^*/\Gamma^+$ are elementary abelian $2$-groups, and we determine their ranks.

By Proposition \ref{twosided}, we have that $\calO\alpha \calO = \frakc J$ where $\nrd(J) = \fraka \parallel \frakd$.  From strong approximation, we have $\nrd(\alpha)\Z_F = \frakc^2 \fraka = a\Z_F$ with $a \in F_+^\times$.  Thus we have a homomorphism
\begin{equation} \label{NOa}
\begin{aligned}
N(\calO)_+ &\to \{\fraka \parallel \frakd : [\fraka] \in (\Cl^+ \Z_F)^2 \} \\
\alpha &\mapsto \fraka.
\end{aligned}
\end{equation}
The kernel of this map contains $\calO_+^*$.  The map is surjective, since if $\fraka \parallel \frakd$ with $[\fraka] \in (\Cl^+ \Z_F)^2$ then there exists $\frakc$ satisfying $[\frakc^2]=[\fraka^{-1}]$ in $\Cl^+ \Z_F$, and the two-sided ideal $\frakc J$ with $J=[\calO,\calO] + \fraka \calO$ has $[\nrd(\frakc J)]=[\frakc^2 \fraka]=[(1)]$ so by strong approximation there exists $\alpha \in \calO_+$ such that $\calO \alpha \calO = \frakc J$ and in particular $\alpha \in N(\calO)_+$.  

Recall $\Gamma^*=N(\calO)_+/F^\times$.  The kernel of (\ref{NOa}) is generated by $\alpha \in N(\calO)_+$ such that $\calO\alpha\calO = \frakc \calO$ with $[\frakc^2]=[(1)] \in \Cl^+ \Z_F$.  However, the image of $F^\times$ consists of all principal ideals, so the class of $\frakc$ is only well-defined in $\Cl \Z_F$, and thus belongs to $(\Cl \Z_F)[2]_+$, as claimed.
\end{proof}

\begin{rmk}
Proposition \ref{corgamma} corrects Exercise III.5.4 in Vign\'eras \cite{vigneras-book}.
\end{rmk}

In this situation, the decomposition (\ref{decomp}) has an alternate description via class numbers.  Two right fractional $\calO$-ideals $I,J$ are \defi{narrowly isomorphic} (resp.\ \defi{isomorphic}) if there exists $\alpha \in B_+^\times$ (resp.\ $\alpha \in B^\times$) such that $\alpha I = J$.  The set of narrow isomorphism classes of locally principal right fractional $\calO$-ideals is denoted $\Cl^+ \calO$ and called the \defi{narrow class set} of $\calO$; we have a canonical bijection
\[ B_+^\times \backslash \Bhat^\times / \calOhat^\times = \Cl^+(\calO). \]
We similarly define the \defi{class set} $B^\times \backslash \Bhat^\times / \calOhat^\times = \Cl(\calO)$.  Now the reduced norm (\ref{strongapprox}) gives a bijection
\[ \nrd: \Cl^+(\calO) = B_+^\times \backslash \Bhat^\times / \calOhat^\times \xrightarrow{\sim} F_+^\times \backslash \Fhat^\times / \Zhat_F^\times = \Cl^+(\Z_F) \]
to the narrow ideal class group of $F$ and similarly $\Cl(\calO) \xrightarrow{\sim} \Cl^{(+)}(\Z_F)$ is in bijection with $\Cl^{(+)}(\Z_F)$, the ideal class group of $F$ with modulus given by the set of all real places of $F$ ramified in $B$.  Note that $\Cl^+(\calO)=\Cl(\calO)$ if and only if $\Z_{F,(+)}^\times/\Z_F^{\times 2} = \Z_{F,+}^\times /\Z_F^{\times 2}$ where 
\[ \Z_{F,(+)}^\times=\{ u \in \Z_F^\times : v(u) > 0 \text{ for all real places $v$ ramified in $B$}\}. \]

Therefore, the decomposition (\ref{decomp}) in the case $\Gamma=\Gamma_0^+(\frakN)$ becomes:
\[ X_{\textup{Sh}}(\Gamma_0^+(\frakN)) = \bigsqcup_{[\frakb] \in \Cl^+(\Z_F)} X(\Gamma_0^+(\frakN)_\frakb) \]
where $\Gamma_0(\frakN)^+=\calO_{\frakb,+}/\Z_F^\times$.  A similar analysis (with another application of strong approximation) shows that $X^1$ is also a union indexed by $\Cl^+(\Z_F)$.

The orbifold associated to $\Gamma^*$ is instead described by type numbers.  For each representative $\betahat$ corresponding to an ideal $\frakb$ as in (\ref{breakup}), the order $\calO_\frakb = \betahat \calOhat \betahat^{-1} \cap B$ is again an Eichler order of level $\frakN$.  There is a bijection between the set of narrow isomorphism classes of orders everywhere locally isomorphic to $\calO$ and the set 
\[ T^+(\calO) = B_+^\times \backslash \Bhat^\times / N(\calOhat); \]
we call $T^+(\calO)$ the set of \defi{narrow types} of $\calO$ and $t^+(\calO)=\#T^+(\calO)$ the \defi{narrow type number}.  In a similar way, we define the set of \defi{types} $T(\calO)$ and the type number $t(\calO)=\#T(\calO)$.  

\begin{rmk}
The preceding results on type numbers and class numbers can also be phrased in the language of quadratic forms.  Indeed, there is a functorial, discriminant-preserving bijection between twisted similarity classes of ternary quadratic modules over $\Z_F$ and isomorphism classes of $\Z_F$-orders in quaternion algebras obtained from the even Clifford algebra \cite{Voight:charquat}.  Importing this language, we say also that the \defi{genus} of an order $\calO$ consists of all orders everywhere locally isomorphic to $\calO$, and the type set $T(\calO)$ represents the \defi{isometry classes} in the genus.
\end{rmk}

We have a surjective map from $\Cl^+(\calO) \to T^+(\calO)$ and $\Cl(\calO) \to T(\calO)$, so each isomorphism class of Eichler orders is represented at least once among the set of orders $\calO_\frakb$; strong approximation now gives a bijection 
\[ \nrd: B_+^\times \backslash \Bhat^\times / N(\calOhat) \xrightarrow{\sim} F_{+}^\times \backslash \Fhat^\times / \nrd(N(\calOhat)) = \Cl^+_{\Gamma^*}(\Z_F) \]
where $\Cl^+_{\Gamma^*}(\Z_F)$ is the quotient of $\Cl^+(\Z_F)$ by the subgroup generated by the squares $\Cl^+(\Z_F)^2$ and the set $\{[\fraka] : \fraka \parallel \frakd\}$ of classes of unitary divisors of the discriminant.  (Note the relationship of this statement to Proposition \ref{corgamma}.)  Similarly, $T(\calO)$ is in bijection the quotient of $\Cl^{(+)}(\Z_F)$ by the subgroup generated by $\Cl^{(+)}(\Z_F)^2$ and the set $\{[\fraka] : \fraka \parallel \frakd\}$.  In particular, the narrow type number and the type number are a power of $2$, and we have a decomposition
\[ X_{\textup{Sh}}(\Gamma_0^*(\frakN)) = \bigsqcup_{[\frakb] \in \Cl^+_{\Gamma^*}(\Z_F)} X(\Gamma_0^*(\frakN)_\frakb), \]
where $\Gamma_0^*(\frakN)_\frakb = N(\calO_{\frakb})_+/F^\times$ and of course $X(\Gamma_0^*(\frakN)_\frakb) = \Gamma_0^*(\frakN)_\frakb \bs \calH$.

We summarize this derivation in the following proposition.

\begin{prop} \label{classnotypeno}
Let $\calO$ be an Eichler order of discriminant $\frakd=\frakD\frakN$.  The set $\Cl^+(\calO)$  (resp.\ $\Cl(\calO)$) is in bijection with $\Cl^+(\Z_F)$ (resp.\ $\Cl^{(+)}(\Z_F)$).  

The set $T(\calO)$ of isomorphism classes of orders everywhere locally isomorphic to $\calO$ is in bijection with the group $\Cl_{\Gamma^*}^{(+)}(\Z_F)$ obtained as the quotient of $\Cl^{(+)}(\Z_F)$ by the subgroup generated by the squares and the set $\{[\fraka] : \fraka \parallel \frakd\}$ of classes of unitary divisors of the discriminant $\frakd$.

Similarly, the set $T^+(\calO)$ of narrow isomorphism classes of orders everywhere locally isomorphic to $\calO$ is in bijection with $\Cl^+_{\Gamma^*}(\Z_F)$ obtained as the quotient of $\Cl^+(\Z_F)$ by the squares and the classes of unitary divisors of the discriminant $\frakd$. 
\end{prop}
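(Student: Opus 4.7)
My plan is to obtain all four bijections by applying strong approximation (as in~\eqref{strongapprox}) to the appropriate double coset; the only substantive task is to identify the image of $N(\calOhat)$ under the reduced norm.

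For the class sets, recall that $\nrd(\calOhat^\times) = \Zhat_F^\times$ for an Eichler order. Applying strong approximation to $\Cl^+(\calO) = B_+^\times \bs \Bhat^\times / \calOhat^\times$ then yields
\[ \Cl^+(\calO) \xrightarrow{\sim} F_+^\times \bs \Fhat^\times / \Zhat_F^\times = \Cl^+(\Z_F); \]
replacing $B_+^\times$ throughout with $B^\times$ and $F_+^\times$ with the group controlling signs at the real places ramified in $B$ gives $\Cl(\calO) \xrightarrow{\sim} \Cl^{(+)}(\Z_F)$.

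For the type sets the analogous step reduces the problem to computing $\nrd(N(\calOhat))$ as a subgroup of $\Fhat^\times$. I would proceed prime by prime using Proposition~\ref{twosided}: the locally principal two-sided $\calO_\frakp$-ideals are generated by scalar extensions $\frakc \calO_\frakp$ (whose norms contribute $(\Fhat^\times)^2$) together with, at each prime with $\frakp^e \parallel \frakd$, the two-sided ideal $[\calO_\frakp,\calO_\frakp] + \frakp^e \calO_\frakp$ (contributing an element of valuation $e$ at $\frakp$). Combined with $\nrd(\calOhat^\times) = \Zhat_F^\times$, this gives
\[ \nrd(N(\calOhat)) = (\Fhat^\times)^2 \cdot \Zhat_F^\times \cdot \la \pi_\frakp^e : \frakp^e \parallel \frakd \ra, \]
so strong approximation identifies $T^+(\calO) = B_+^\times \bs \Bhat^\times / N(\calOhat)$ with the quotient of $\Cl^+(\Z_F)$ by the squares and the classes of unitary divisors of $\frakd$, which is $\Cl^+_{\Gamma^*}(\Z_F)$ by definition. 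The identical argument with $B^\times$ in place of $B_+^\times$ delivers $T(\calO) \xrightarrow{\sim} \Cl^{(+)}_{\Gamma^*}(\Z_F)$.

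The main point requiring care is the local computation of $\nrd(N(\calO_\frakp))$: at split primes $\frakp \nmid \frakd$ one verifies that the image equals $(F_\frakp^\times)^2 \cdot \Z_{F,\frakp}^\times$ (no odd-valuation element arises, since $N(\M_2(\Z_{F,\frakp})) = F_\frakp^\times \cdot \GL_2(\Z_{F,\frakp})$); at ramified primes $\frakp \mid \frakD$ the unique maximal two-sided ideal squaring to $\frakp \calO_\frakp$ supplies an element of norm valuation~$1$; and at Eichler primes $\frakp^e \parallel \frakN$ the Atkin--Lehner involution supplies a normalizer element whose norm has valuation $e$ at $\frakp$. All three cases are immediate from Proposition~\ref{twosided}, so no genuinely new input is required.
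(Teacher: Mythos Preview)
Your proposal is correct and follows essentially the same route as the paper: the proposition there is explicitly presented as a summary of the preceding derivation, which applies strong approximation to the double cosets $B_+^\times \bs \Bhat^\times / \calOhat^\times$ and $B_+^\times \bs \Bhat^\times / N(\calOhat)$ (and their non-narrow analogues) and identifies the targets via $\nrd(\calOhat^\times)=\Zhat_F^\times$ and the description of locally principal two-sided ideals in Proposition~\ref{twosided}. Your prime-by-prime computation of $\nrd(N(\calOhat))$ just makes explicit what the paper leaves implicit in the sentence ``(Note the relationship of this statement to Proposition~\ref{corgamma})''.
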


\subsection*{Volume formula}

The formula for the volume of arithmetic quotients will be a key ingredient in our proof.  The formula is due to Borel \cite[7.3]{borel-commensurability} (see also Maclachlan and Reid \cite[\S 11.1]{mac-reid-book}, Vign\'eras \cite[Corollaire IV.1.8]{vigneras-book} and, for the totally real case, Shimizu \cite[Appendix]{Shimizu}):
\begin{equation} \label{volumeformula}
\vol(X^1)=\vol(X_0^1(\frakN)_\frakb) = \frac{2(4\pi)^s}{(4\pi^2)^r(8\pi^2)^c} d_F^{3/2} \zeta_F(2) \Phi(\frakD)\Psi(\frakN) 
\end{equation}
where
\begin{equation} \label{PhiPsi}
\begin{aligned} 
\Phi(\frakD) &= \#(\Z_F/\frakD)^\times = N \frakD \prod_{\frakp \mid \frakD} \left(1-\frac{1}{N \frakp}\right) \\
\Psi(\frakN) &= [\Gamma_0^1(1) : \Gamma_0^1(\frakN)] = N\frakN \prod_{\frakp \mid \frakN} \left(1 + \frac{1}{N\frakp}\right)
\end{aligned}
\end{equation}
and the hyperbolic volume is normalized as in (\ref{metrics}).  Note that as the right-hand side of this formula is independent of the choice of Eichler order $\calO$, so every connected orbifold in the decomposition of $X_{\textup{Sh}}(\Gamma^1)$ has the same volume $\vol(X^1)$.  

The volume of an arithmetic Fuchsian group commensurable with $\Gamma^1$ can be computed using (\ref{volumeformula}), scaled by the index of the group with respect to $\Gamma^1$.  By Theorem \ref{Borel}, Proposition \ref{classnotypeno}, and the accompanying discussion, for the maximal arithmetic Fuchsian groups $\Gamma^*$ we have
\begin{equation} \label{volformnoteichler}
 \vol(X^*) = \frac{\vol(X^1)}{[\Z_{F,+}^\times:\Z_F^{\times 2}]\cdot \#\{\fraka \parallel \frakD\frakN : [\fraka] \in (\Cl^+ \Z_F)^2\} \cdot  \#(\Cl \Z_F)[2]_+}.
\end{equation}

\begin{rmk}
This expression (\ref{volformnoteichler}) is equivalent to the one given by Borel \cite[8.6]{borel-commensurability} (also given by Maclachlan and Reid \cite[Corollary 11.6.6]{mac-reid-book} and further adapted by Chinburg and Friedman \cite[\S 2]{chinburg-smallestorbifold}); we have simply factored the index $[\Gamma^*:\Gamma^1]$ in a different way which presents some advantages for our purposes (computational and conceptual).
\end{rmk}

Finally, in the case where the dimension $m=2$, so that $r=n-1$ and $s=0$, the volume (now area) is determined by discrete invariants of the Fuchsian group: by the Riemann-Hurwitz formula, we have \cite[Theorem 4.3.1]{Katok}
\begin{equation} \label{riemannhurwitz}
\vol(X(\Gamma))=2\pi\left( 2g-2 + \sum_{i=1}^k \left(1-\frac{1}{e_i}\right) + t\right)
\end{equation}
if $\Gamma$ has $k$ elliptic cycles of orders $e_i < \infty$ and $t$ parabolic cycles; and then  $\Gamma$ has \defi{signature} $(g;e_1,\ldots,e_k;t)$ and a presentation
\begin{equation} \label{presentation}
\begin{aligned}
\Gamma \cong &\la \alpha_1,\beta_1,\dots,\alpha_g,\beta_g,\ \gamma_1,\ldots,\gamma_k,\ \delta_1,\ldots,\delta_t \\
&\quad\quad \mid \gamma_1^{e_1} = \ldots = \gamma_k^{e_k} = [\alpha_1,\beta_1] \ldots [\alpha_g,\beta_g]\gamma_1 \cdots \gamma_k \delta_1 \ldots \delta_t = 1 \ra
\end{aligned}
\end{equation}
where $[\alpha_i,\beta_i]=\alpha_i\beta_i\alpha_i^{-1}\beta_i^{-1}$ is the commutator.

\section{Isospectrality and selectivity}\label{section:selectivitysection}

Continuing with the previous section, we relate the spectral properties of orbifolds obtained from quaternionic groups to the  arithmetic properties of the corresponding quaternion orders; in particular, we describe the phenomenon of selectivity.  

\subsection*{Representation equivalence}

Let $X$ be a connected, compact Riemannian manifold.  Associated to $X$ is the Laplace operator $\Delta$, defined by $\Delta(f)=-\opdiv(\grad(f))$ for $f \in L^2(X)$ a square-integrable function on $X$.  The eigenvalues of $\Delta$ on the space $L^2(X)$ form an infinite, discrete sequence of nonnegative real numbers 
\begin{equation} \label{eqn:spectrum}
0=\lambda_0 < \lambda_1 \leq \lambda_2 \leq \dots, 
\end{equation}
called the \defi{spectrum} of $X$.  We say that the Riemannian manifolds $X,X'$ are \defi{isospectral} if they have the same spectrum.

Let $G$ be a semisimple Lie group.  Then there exists a unique (up to scalar) bi-invariant Haar measure on $G$. For $g\in G$ we denote by $R_g$ the right translation operator on $\Gamma\bs G$.  Let $\Gamma \leq G$ be a discrete cocompact subgroup and let $L^2(\Gamma\bs G)$ be the space of square integrable functions on $\Gamma\bs G$ with respect to the projection of the Haar measure of $G$. We define the \defi{quasi-regular representation} $\rho_\Gamma$ of $G$ on $L^2(\Gamma\bs G)$ by $$\rho_\Gamma(g)f=f\circ R_g$$
for $g\in G$.  The quasi-regular representation $\rho_\Gamma$ is unitary.

Now let $\Gamma,\Gamma' \leq G$ be discrete cocompact subgroups.  We say that $\Gamma,\Gamma'$ are \defi{representation equivalent} if there exists a unitary isomorphism $T: L^2(\Gamma\bs G)\xrightarrow{\sim} L^2(\Gamma'\bs G)$ such that $T(\rho_{\Gamma}(g)f)=\rho_{\Gamma'}(g)T(f)$ for all $g\in G$ and $f\in L^2(\Gamma\bs G)$.

The following criterion for isospectrality was proven by DeTurck and Gordon \cite{DeturckGordon}.

\begin{theorem}[DeTurck-Gordon]\label{theorem:DeTurckGordon}
Let $G$ be a Lie group which acts on a Riemannian manifold $M$ (on the left) by isometries.  Suppose that $\Gamma,\Gamma' \leq G$ act properly discontinuously on $M$. If $\Gamma$ and $\Gamma'$ are representation equivalent in $G$, then $X=\Gamma\bs M$ and $X'=\Gamma'\bs M$ are isospectral.
\end{theorem}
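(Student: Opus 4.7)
The plan is to realize $L^2(X)$ as a closed subspace of $L^2(\Gamma\bs G)$ in a way that is compatible with the intertwiner $T$, and to identify the Laplacian on $X$ with the right action on this subspace of a central element of the universal enveloping algebra $U(\mathfrak g)$. First, pick a base point $x_0 \in M$ and let $K = \mathrm{Stab}_G(x_0)$. In the setting of this paper, $G = \PGL_2^+(\R)^s \times \PSL_2(\C)^c$ acts transitively on $M = \calH$ with $K$ a maximal compact subgroup, so $M \cong G/K$ and $\Gamma\bs M \cong \Gamma\bs G/K$; in general one reduces to the transitive case orbit by orbit after quotienting $G$ by the kernel of the action. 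Identifying functions on $\Gamma\bs G/K$ with right-$K$-invariant functions on $\Gamma\bs G$ (isometrically, once Haar measure on $K$ is normalized) yields a Hilbert-space isomorphism $L^2(X) \xrightarrow{\sim} L^2(\Gamma\bs G)^K$, and similarly for $X'$.

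Next, because $G$ acts on $M$ by isometries, the Laplacian $\Delta_M$ commutes with the $G$-action, and a standard computation identifies $\Delta_M$ under $M \cong G/K$ with the right action on smooth right-$K$-invariant functions of a central element $\Omega \in Z(U(\mathfrak g))$ (essentially the Casimir of the Killing form, suitably normalized by the Riemannian metric). Consequently, the spectrum of the Laplacian on $X$ coincides with the spectrum of the self-adjoint extension of $\rho_\Gamma(\Omega)$ acting on $L^2(\Gamma\bs G)^K$, and analogously for $X'$.

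Now apply the intertwiner. Since $T$ commutes with $\rho_\Gamma(g)$ for every $g \in G$, it commutes in particular with the right action of $K$, hence preserves $K$-invariants and restricts to a unitary isomorphism
\[ T\colon L^2(\Gamma\bs G)^K \xrightarrow{\sim} L^2(\Gamma'\bs G)^K. \]
On the common dense core of smooth $K$-finite vectors, $T$ further intertwines the action of $\Omega$, and therefore intertwines the self-adjoint closures of $\Delta_X$ and $\Delta_{X'}$; unitary equivalence of self-adjoint operators preserves spectrum with multiplicities, which is precisely the desired isospectrality.

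The main obstacle I expect is the careful passage from the bounded intertwining of the quasi-regular representations to an intertwining of the unbounded self-adjoint operators $\Delta_X$ and $\Delta_{X'}$: one has to verify that $T$ preserves a common dense core and commutes with $\Omega$ there. This is standard via the density of smooth $K$-finite vectors in $L^2(\Gamma\bs G)^K$ and the fact that $\Omega$ lies in the center of $U(\mathfrak g)$, but it is the step where representation-theoretic input is actually used. A secondary subtlety is verifying that $\Omega$ really does realize $\Delta_M$ under $M = G/K$, which amounts to matching a bi-invariant pseudo-metric on $G$ with the Riemannian metric on $M$ and is handled by a standard symmetric-space computation.
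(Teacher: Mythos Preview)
The paper does not supply its own proof of this theorem: it is quoted as a result of DeTurck and Gordon and attributed to their paper \cite{DeturckGordon}, so there is no in-paper argument to compare against. Your outline is the standard one and is essentially correct: realize $M$ as $G/K$, identify $L^2(\Gamma\backslash M)$ with the $K$-invariants in $L^2(\Gamma\backslash G)$, identify $\Delta_M$ with the action of the Casimir element $\Omega\in Z(U(\mathfrak g))$, and observe that the unitary intertwiner preserves $K$-invariants and commutes with $\Omega$ on smooth vectors, hence conjugates the self-adjoint Laplacians. You have also correctly flagged the one genuine analytic point, namely the passage from intertwining the group action to intertwining the unbounded operator $\Delta$, which is handled by working with the dense subspace of smooth (or $K$-finite) vectors.
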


\begin{rmk}
The conclusion of Theorem \ref{theorem:DeTurckGordon} can be strengthened to imply that $X$ and $X'$ are \defi{strongly isospectral}: the eigenvalue spectra of $X$ and $X'$ with respect to any natural, self-adjoint elliptic differential operator coincide, e.g., the Laplacian acting on $p$-forms.
\end{rmk}

All of the orbifolds which will be constructed in this paper will be shown to be isospectral by employing Theorem \ref{theorem:DeTurckGordon}.  As our construction is arithmetic in nature, it will be important to have an arithmetic means of proving representation equivalency; for this, we first translate representation equivalence into a statement about conjugacy classes in the group.

For $\gamma \in \Gamma$, we define the \defi{weight} of the conjugacy class $\gamma^\Gamma$ to be $w(\gamma^\Gamma) = \vol(C_\Gamma(\gamma)\bs C_G(\gamma))$ (with respect to the Haar measure), where $C_\Gamma(\gamma)$ denotes the centralizer of $\gamma$ in $\Gamma$.  Now for $g \in G$, we have
\begin{equation} \label{gGdecomp}
g^G \cap \Gamma = \bigsqcup_i \gamma_i^\Gamma
\end{equation}
as a (possibly empty) disjoint union of conjugacy classes in $\Gamma$; we call $\gamma_i^\Gamma$ a \defi{conjugacy class} of $g$ in $\Gamma$, and we accordingly define the \defi{weight} of $g^G$ over $\Gamma$ to be the multiset of weights $\{w(\gamma_i^\Gamma)\}_i$.

\begin{prop}[{Vign\'eras \cite[Corollaire 5]{vigneras-isospectral}}] \label{proposition:vignerasrepresentationequivalence}
Suppose that for all $g \in G$, the weight of $g^G$ over $\Gamma$ is equal to the weight of $g^G$ over $\Gamma'$.  Then $\Gamma$ and $\Gamma^\prime$ are representation equivalent.
\end{prop}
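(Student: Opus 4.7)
The plan is to convert the hypothesis, which compares $G$-conjugacy class data in $\Gamma$ versus $\Gamma'$, into an equality of traces of convolution operators via the Selberg trace formula, and then to extract an equality of isotypic multiplicities in $\rho_\Gamma$ and $\rho_{\Gamma'}$.

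First, choose a smooth, compactly supported function $f$ on $G$ and form the convolution operator $\rho_\Gamma(f) = \int_G f(g)\rho_\Gamma(g)\,dg$ on $L^2(\Gamma\bs G)$. By cocompactness of $\Gamma$, this is an integral operator with kernel $K_f(x,y) = \sum_{\gamma\in\Gamma} f(x^{-1}\gamma y)$, and it is of trace class; its trace, computed by integrating the kernel along the diagonal and unfolding by $\Gamma$-conjugacy classes, is
\begin{equation*}
\tr \rho_\Gamma(f) = \sum_{\gamma^\Gamma} w(\gamma^\Gamma)\, O_\gamma(f), \qquad O_\gamma(f) := \int_{C_G(\gamma)\bs G} f(x^{-1}\gamma x)\,dx,
\end{equation*}
where the outer sum runs over $\Gamma$-conjugacy classes in $\Gamma$. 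Since $O_\gamma(f)$ depends only on the $G$-conjugacy class of $\gamma$, grouping terms by $G$-class using \eqref{gGdecomp} yields
\begin{equation*}
\tr \rho_\Gamma(f) = \sum_{g^G} O_g(f)\sum_i w(\gamma_i^\Gamma),
\end{equation*}
the inner sum being the total weight of the multiset attached to $g^G$ over $\Gamma$.

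By hypothesis, this multiset coincides with its counterpart for $\Gamma'$, so in particular the inner sums agree for every $g^G$, and therefore $\tr \rho_\Gamma(f) = \tr \rho_{\Gamma'}(f)$ for every such $f$. Decomposing the unitary representations discretely as $\rho_\Gamma \cong \bigoplus_\pi m_\Gamma(\pi)\,\pi$ and likewise for $\Gamma'$ (legal by cocompactness), the identity becomes
\begin{equation*}
\sum_\pi m_\Gamma(\pi)\, \tr \pi(f) = \sum_\pi m_{\Gamma'}(\pi)\, \tr \pi(f)
\end{equation*}
for every test function $f$. Linear independence of the distribution characters $\tr\pi$ on a sufficiently rich family of test functions (standard harmonic analysis on $G$) then forces $m_\Gamma(\pi) = m_{\Gamma'}(\pi)$ for every irreducible unitary $\pi$, and piecing together isometries between the corresponding isotypic components of $L^2(\Gamma\bs G)$ and $L^2(\Gamma'\bs G)$ produces the required $G$-equivariant unitary isomorphism $T$.

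The principal technical point is the last step: separating multiplicities from equality of traces against every smooth, compactly supported $f$. This is standard in the cocompact setting, where $\rho_\Gamma$ decomposes discretely with finite multiplicities, but it draws on nontrivial input from the representation theory of $G$. By contrast, the convergence and Fubini manipulations used in the geometric unfolding, as well as the trace-class property of $\rho_\Gamma(f)$, are routine consequences of the compact support of $f$ combined with cocompactness of $\Gamma$.
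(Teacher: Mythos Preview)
Your proof is correct and follows exactly the approach indicated by the paper, which simply states that the proposition ``is a direct consequence of the Selberg trace formula'' without further detail. You have supplied the standard unfolding of the geometric side, matched it against the spectral decomposition, and invoked linear independence of characters; note that your argument uses only the \emph{sum} of each weight multiset, so the hypothesis is slightly stronger than what the trace-formula proof actually requires.
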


Proposition \ref{proposition:vignerasrepresentationequivalence} is a direct consequence of the Selberg trace formula.  

\begin{rmk} \label{rmk:repequiv}
In fact, in dimension $2$, a converse holds: the spectrum \eqref{eqn:spectrum} of a $2$-orbifold determines, and is determined by, the area, the number of elliptic points of each order, and the number of primitive closed geodesics of each length.  This result was proven by Huber \cite{HuberZur} for hyperbolic $2$-manifolds and generalized by Doyle and Rossetti \cite{DoyleRossetti0,DoyleRossetti} to non-orientable $2$-orbifolds.  Whether the converse is true in higher dimension remains open; however, from the point of view of the algebraic techniques brought to bear on this problem, we prefer to work with representation equivalence rather than Laplace isospectrality.
\end{rmk}

\subsection*{Conjugacy classes via double cosets}

We now recast the hypothesis of Proposition \ref{proposition:vignerasrepresentationequivalence} in our situation in terms of certain embedding numbers of quadratic rings into quaternion orders.  We retain the notation from Section 1.  (Compare Vign\'eras \cite[\S III.5]{vigneras-book}.)

Let $g \in B^\times$ represent a class in $\opP\!B^\times=B^\times/F^\times$; we study the conjugacy classes of $g$ in $\Gamma$.  Let $F[g]$ be the $F$-algebra generated by $g$; this algebra indeed depends only the conjugacy class of $g$ up to isomorphism.

First, if $g \in F^\times$ represents the trivial class, then $g$ is alone in its conjugacy class and
\[ w(g^\Gamma)=\{\vol(\Gamma \backslash \calH)\} \]
under a suitable normalization of the Haar measure.  So from now on suppose $g \not\in F^\times$.  Let $K=F[g]$.  

Suppose that $g^{B^\times} \cap \Gamma \neq \emptyset$.  Then without loss of generality we may assume $g \in \Gamma$.  Let $\gamma \in g^{B^\times} \cap \Gamma$.  Then $\gamma=\mu^{-1} g \mu$ with $\mu \in B^\times$.  Let
\[ E=\{\mu \in B^\times : \mu^{-1} g \mu \in \Gamma \} \]
be the set of such conjugating elements.  Then the map
\begin{equation} \label{eqn:firstbij}
\begin{aligned}
g^{B^\times} \cap \Gamma &\to K^\times \bs E / \Gamma \\
g^\Gamma &\mapsto K^\times \mu \Gamma
\end{aligned} 
\end{equation}
is well-defined and bijective by definition.  

Next, we compare this double coset with its adelization.  We consider the set
\[ \Ehat = \{ \muhat \in \Bhat^\times : \muhat^{-1} g \muhat \in \Gammahat \}. \]
Not every element of $\Ehat$ gives rise to an element of $E$; however, continuing with our assumption as in Section 1 that $B$ has at least one split infinite place and thus satisfies the Eichler condition, we will be able to characterize $E \subseteq \Ehat$ explicitly.

Let 
\[ F_{(+)}^\times=\{x \in F^\times : v(x) > 0 \text{ for all ramified real places $v$}\}. \]
The reduced norm
\[ \nrd: \Ehat/\Gammahat \to F_{(+)} \bs \Fhat^\times / \nrd(\Gammahat)=\Cl_\Gamma \]
partitions the set $\Ehat/\Gammahat$ into finitely many classes; for $[\frakb] \in \Cl_\Gamma$, write
\begin{equation} \label{eqn:Ehatb}
\Ehat_\frakb=\{\muhat \in \Ehat : [\nrd(\muhat\Gammahat)]=[\frakb] \in \Cl_\Gamma\} 
\end{equation}
so that the fibers of $\nrd$ are the cosets $\Ehat_\frakb/\Gammahat$.

\begin{lem} \label{lem:strapprox}
Let $\muhat \in \Bhat^\times$.  Then $\muhat \Gammahat =\mu \Gammahat$ for some $\mu \in B^\times$ if and only if $[\nrd(\muhat)] = 1 \in \Cl_\Gamma$, i.e., $\nrd(\muhat) \in F_{(+)}^\times\nrd(\Gammahat)$.
\end{lem}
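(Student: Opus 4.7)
The plan is to invoke two standard results from the arithmetic of quaternion algebras: the Hasse--Schilling--Maass norm theorem, which identifies the image of the reduced norm on $B^\times$, and strong approximation for the norm-one group $B^{(1)}=\{\alpha\in B^\times:\nrd(\alpha)=1\}$, which applies because the hypothesis $s+c>0$ ensures $B$ satisfies the Eichler condition. Together these describe the double coset $B^\times\backslash \Bhat^\times/\Gammahat$ completely via the reduced norm.

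For the forward direction, I would simply take norms: if $\muhat=\mu\gammahat$ with $\mu\in B^\times$ and $\gammahat\in\Gammahat$, then $\nrd(\muhat)=\nrd(\mu)\nrd(\gammahat)$. By Hasse--Schilling--Maass, $\nrd(B^\times)=F_{(+)}^\times$, so $\nrd(\muhat)\in F_{(+)}^\times\nrd(\Gammahat)$, as required.

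For the converse, suppose $\nrd(\muhat)=a\,\nrd(\gammahat_0)$ with $a\in F_{(+)}^\times$ and $\gammahat_0\in\Gammahat$. Applying Hasse--Schilling--Maass in the other direction, choose $\mu_0\in B^\times$ with $\nrd(\mu_0)=a$. Then $\widehat{\nu}:=\mu_0^{-1}\muhat\gammahat_0^{-1}\in\Bhat^{(1)}$ has trivial reduced norm. Because $\Gammahat\cap\Bhat^{(1)}$ is open in $\Bhat^{(1)}$ (as $\Gammahat$ is open in $\Bhat^\times$), strong approximation for $B^{(1)}$ yields a factorization $\widehat{\nu}=\nu\,\widehat{\gamma}_1$ with $\nu\in B^{(1)}\subset B^\times$ and $\widehat{\gamma}_1\in\Gammahat$. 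Rearranging gives $\muhat=(\mu_0\nu)(\widehat{\gamma}_1\gammahat_0)\in B^\times\Gammahat$, so setting $\mu=\mu_0\nu$ furnishes the required $\mu\in B^\times$ with $\muhat\Gammahat=\mu\Gammahat$.

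I do not anticipate a serious obstacle: both ingredients are classical (see Vign\'eras \cite{vigneras-book}, Chapter III) and apply under the running Eichler hypothesis $s+c>0$. The only point requiring a brief remark is the mild discrepancy between $F_+^\times$, which appears in the definition of $\Cl_\Gamma$ via \eqref{strongapprox}, and $F_{(+)}^\times$, which appears here: this reflects the distinction between working with $B_+^\times$ (totally positive reduced norms) in the original strong approximation statement and with the full group $B^\times$ in this lemma, where Hasse--Schilling--Maass makes $F_{(+)}^\times$ the correct target. The identification $[\nrd(\muhat)]=1$ should therefore be understood in the quotient $F_{(+)}^\times\backslash\Fhat^\times/\nrd(\Gammahat)$, which is the version of $\Cl_\Gamma$ governing $B^\times$-orbits.
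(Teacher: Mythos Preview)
Your proof is correct and in substance matches the paper's: the paper simply invokes the bijection $\nrd:B^\times\backslash\Bhat^\times/\Gammahat\xrightarrow{\sim}F_{(+)}^\times\backslash\Fhat^\times/\nrd(\Gammahat)$ already recorded as \eqref{strongapprox} (in its $B^\times$/$F_{(+)}^\times$ variant) and reads off that the trivial class on the right corresponds to $B^\times\Gammahat$ on the left. You have unpacked exactly the standard proof of that bijection---Hasse--Schilling--Maass plus strong approximation for $B^{(1)}$---so the two arguments differ only in the level of abstraction, not in content; your remark on $F_+^\times$ versus $F_{(+)}^\times$ is apt and aligns with how the paper silently passes between the two.
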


\begin{proof}
Since $\Gammahat$ is an open subgroup of $\Bhat^\times$, by strong approximation (\ref{strongapprox}) the map
\[ \nrd:B^\times \bs \Bhat^\times / \Gammahat \xrightarrow{\sim} F_{(+)}^\times \bs \Fhat^\times / \nrd(\Gammahat)  \]
is a bijection, and so $\muhat\Gammahat  =  \mu\Gammahat$ with $\mu \in B^\times$ if and only if $\nrd(\muhat) \in F_{(+)}^\times\nrd(\Gammahat) $, as claimed.
\end{proof}

So by Lemma \ref{lem:strapprox}, the elements of $\Ehat^\times/\Gammahat$ that are representable by a class from $E$ are those that belong to the trivial fiber $\Ehat_{(1)}/\Gammahat$.

Many of the fibers are naturally in bijection, as follows.

\begin{lem} \label{lem:fibersinbij}
Suppose $\betahat \in \Khat^\times$, and let $[\nrd(\betahat\Gammahat)]=[\frakb]$.  Then there is a bijection
\begin{align*}
\Ehat_{(1)}/\Gammahat &\xrightarrow{\sim} \Ehat_\frakb \\
\muhat &\mapsto \betahat\muhat.
\end{align*}
\end{lem}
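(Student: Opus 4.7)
The plan is to show directly that multiplication by $\betahat$ sets up the desired bijection at the level of $\Gammahat$-cosets, with the single key observation being that $\betahat$, lying in $\Khat^\times = \Fhat[g]^\times$, commutes with $g$.

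First I would verify that the map is well-defined: given $\muhat \in \Ehat_{(1)}$, we must check that $\betahat\muhat$ actually lies in $\Ehat_\frakb$. The membership $\betahat\muhat \in \Ehat$ is immediate once one notes that $\betahat$ commutes with $g$, since then
\[ (\betahat\muhat)^{-1}\, g\, (\betahat\muhat) = \muhat^{-1}\betahat^{-1} g \betahat\muhat = \muhat^{-1} g \muhat \in \Gammahat. \]
The class of $\nrd(\betahat\muhat)\nrd(\Gammahat)$ in $\Cl_\Gamma$ equals $[\nrd(\betahat)][\nrd(\muhat)] = [\frakb]\cdot[(1)] = [\frakb]$ by the definition \eqref{eqn:Ehatb} and the hypothesis $[\nrd(\betahat\Gammahat)] = [\frakb]$, so indeed $\betahat\muhat \in \Ehat_\frakb$. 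Passage to right $\Gammahat$-cosets is clear because left multiplication by $\betahat$ commutes with right multiplication by $\Gammahat$.

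Next I would produce a two-sided inverse. The natural candidate is $\nuhat \mapsto \betahat^{-1}\nuhat$; I would check that if $\nuhat \in \Ehat_\frakb$, then $\betahat^{-1}\nuhat \in \Ehat_{(1)}$ by the same two computations (commutation of $\betahat^{-1}$ with $g$ gives $\betahat^{-1}\nuhat \in \Ehat$, and the reduced norm calculation gives the correct class). Since $\muhat \mapsto \betahat^{-1}\betahat\muhat = \muhat$ and $\nuhat \mapsto \betahat\betahat^{-1}\nuhat = \nuhat$, both maps descend to mutually inverse maps on $\Gammahat$-coset sets.

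The only substantive point in the whole argument is the commutativity $\betahat g = g\betahat$, which rests on $g \in K$ and $\betahat \in \Khat^\times$ with $K$ an \'etale $F$-subalgebra of $B$; everything else is bookkeeping with reduced norms and the definition of $\Ehat_\frakb$ in \eqref{eqn:Ehatb}. There is no obstacle to anticipate, although one should take care in the statement: the target should be read as $\Ehat_\frakb/\Gammahat$ (the bijection is between sets of $\Gammahat$-cosets), and the compatibility with the partition of $\Ehat/\Gammahat$ induced by $\nrd$ is exactly what makes this bookkeeping go through.
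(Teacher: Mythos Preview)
Your proof is correct and follows the same approach as the paper's: the paper's proof is a terse two sentences noting that $\betahat \in \Khat^\times$ commutes with $g \in K$ (giving the displayed computation you also write out) and that the inverse is left multiplication by $\betahat^{-1}$. You have simply supplied the bookkeeping details the paper omits, and your observation that the target should read $\Ehat_\frakb/\Gammahat$ is correct.
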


\begin{proof}
The element $\betahat \in \Khat^\times$ commutes with $g \in K$, so 
\[ (\betahat\muhat)^{-1} g (\betahat\muhat) = \muhat^{-1}g \muhat \in \Gammahat. \]
The inverse is given by left multiplication by $\betahat^{-1}$.
\end{proof}

By class field theory, the Artin map gives a surjection
\[ F_{(+)}^\times \bs \Fhat^\times / \nrd(\Khat^\times) \to \Gal(K/F). \]
Therefore, the map 
\begin{equation} \label{eq:nrdbs}
\nrd: K^\times \bs \Gammahat \Khat^\times / \Gammahat \to F_{(+)}^\times \bs \Fhat^\times / \nrd(\Gammahat)
\end{equation}
is either surjective or the image is half the size, and accordingly the fibers $\Ehat_{\frakb}$ are in bijection with $\Ehat_{(1)}$ as in Lemma \ref{lem:fibersinbij} for half or all of the fibers.

On the one hand, if $\widehat{\nu} \in \Ehat_{\frakb}$, then the map $\muhat \mapsto \widehat{\nu}\muhat$ induces a bijection
\begin{equation} \label{eqn:secondbij}
\begin{aligned}
\Ehat_{(1)}/\Gamma &\xrightarrow{\sim} \Ehat_\frakb/\Gamma \\
\muhat\Gammahat &\mapsto \widehat{\nu} \muhat\Gammahat.
\end{aligned}
\end{equation}
On the other hand, if $\Gammahat'=\betahat^{-1} \Gammahat \betahat$, and we define 
\[ \Ehat'=\{\muhat' \in B^\times : \muhat'^{-1} g \muhat' \in \Gammahat'\} \]
then the map $\muhat \mapsto \muhat\betahat$ induces a bijection
\begin{equation} \label{eqn:thirdbij}
\begin{aligned}
\Ehat/\Gamma &\xrightarrow{\sim} \Ehat'/\Gammahat' \\
\muhat\Gammahat &\mapsto \muhat\betahat\Gammahat' = \muhat\Gammahat\betahat^{-1}
\end{aligned}
\end{equation}
identifying the fibers $\Ehat_\frakc/\Gammahat$ and $\Ehat'_{\frakc\frakb}/\Gammahat'$, the latter defined in the obvious way.  

Putting together these bijections, we obtain the following result.  (Recall the definition of the genus of a group in terms of everywhere locally conjugate groups, given in Definition \ref{def:samegenus}.)

\begin{prop} \label{prop:equivifnonempty}
Let $g \in \Gamma$, let $\betahat \in \Khat^\times$, and let $\Gamma'=\betahat^{-1}\Gammahat\betahat \cap B^\times$.  Then the set $g^{B^\times} \cap \Gamma'$ is in bijection with $g^{B^\times} \cap \Gamma$ whenever it is nonempty.  
\end{prop}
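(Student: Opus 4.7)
The plan is to exploit the commutation $\betahat g = g\betahat$ (which holds because $\betahat \in \Khat^\times$ and $g \in K$) to build the bijection at the adelic level, then descend to $B^\times$-rational data. The argument is essentially an application of the bijections (\ref{eqn:firstbij})--(\ref{eqn:thirdbij}) combined with Lemma \ref{lem:strapprox}.

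Concretely, I would define $\Phi \colon \Bhat^\times \to \Bhat^\times$ by $\Phi(\muhat) = \betahat^{-1}\muhat\betahat$. Since $\betahat$ centralizes $g$, the identity $\Phi(\muhat)^{-1}g\Phi(\muhat) = \betahat^{-1}(\muhat^{-1}g\muhat)\betahat$ shows that $\Phi$ carries $\Ehat$ bijectively onto $\Ehat'$; and since $\betahat^{-1}\Gammahat\betahat = \Gammahat'$, it descends to a bijection $\Ehat/\Gammahat \xrightarrow{\sim} \Ehat'/\Gammahat'$. Conjugation preserves reduced norms, and $\nrd(\Gammahat) = \nrd(\Gammahat')$, so $\Phi$ respects the nrd-fibration and in particular restricts to a bijection between trivial fibers $\Ehat_{(1)}/\Gammahat \xrightarrow{\sim} \Ehat'_{(1)}/\Gammahat'$. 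Because $K$ is commutative, $K^\times$ commutes with $\Phi$, so this bijection is $K^\times$-equivariant for the action by left multiplication.

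Next, I would pass from adelic to rational via strong approximation. By Lemma \ref{lem:strapprox}, any class in $\Ehat_{(1)}/\Gammahat$ is represented by some $\mu \in B^\times$; one checks $\mu^{-1}g\mu \in \Gammahat \cap B^\times = \Gamma$ using the congruence property (Lemma \ref{lem:congclos}), so $\mu \in E$. This gives a $K^\times$-equivariant bijection $E/\Gamma \xrightarrow{\sim} \Ehat_{(1)}/\Gammahat$, and similarly for the primed objects (note $\Gamma'$ is congruence since $\Gamma' = \Gammahat' \cap B^\times$ and $\Gammahat'$ contains a principal congruence subgroup, being conjugate to one that does). Chaining these bijections with (\ref{eqn:firstbij}) produces
\[ g^{B^\times}\cap \Gamma \leftrightarrow K^\times\bs E/\Gamma \leftrightarrow K^\times\bs \Ehat_{(1)}/\Gammahat \xrightarrow{\Phi} K^\times\bs \Ehat'_{(1)}/\Gammahat' \leftrightarrow K^\times\bs E'/\Gamma' \leftrightarrow g^{B^\times}\cap \Gamma'. \]

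I do not anticipate any real obstacle; the argument is essentially a bookkeeping assembly of the pieces already in place, and the commutation $\betahat g = g\betahat$ does all the work. It is worth noting that the \emph{whenever nonempty} qualifier is actually vacuous under these hypotheses: the identity $g = \betahat^{-1}g\betahat \in \betahat^{-1}\Gammahat\betahat \cap B^\times = \Gamma'$ shows $g \in \Gamma'$, so $g^{B^\times}\cap \Gamma'$ automatically contains $g$. This clarifies why selectivity---which can force $g^{B^\times}\cap \Gamma'$ to be empty for an arbitrary $\Gamma'$ in the genus of $\Gamma$---does not obstruct the conclusion: the obstruction measures precisely the failure of the conjugating element to lie in $\Khat^\times$, which is ruled out by hypothesis.
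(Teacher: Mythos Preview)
Your proof is correct and follows essentially the paper's route: both chain \eqref{eqn:firstbij}, strong approximation (Lemma~\ref{lem:strapprox}), and an adelic identification $K^\times \backslash \Ehat_{(1)}/\Gammahat \xrightarrow{\sim} K^\times \backslash \Ehat'_{(1)}/\Gammahat'$. Where the paper factors this last step as left-multiplication by some $\widehat{\nu}\in\Ehat_{\frakb^{-1}}$ via \eqref{eqn:secondbij} followed by right-multiplication by $\betahat$ via \eqref{eqn:thirdbij}, your conjugation $\Phi$ is precisely that composite with the specific choice $\widehat{\nu}=\betahat^{-1}\in\Khat^\times$---which, as you correctly note, also makes the ``whenever nonempty'' clause automatic.
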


\begin{proof}
To be completely explicit: we combine \eqref{eqn:firstbij} and Lemma \ref{lem:strapprox} to obtain bijections
\begin{align*}
g^{B^\times} \cap \Gamma &\xrightarrow{\sim}& K^\times \bs E^\times /\Gamma &\xrightarrow{\sim}& K^\times \bs \Ehat_{(1)}^\times / \Gammahat \\
(\mu^{-1} g \mu)^\Gamma &\mapsto& K^\times \mu \Gamma &\mapsto & K^\times \mu \Gammahat;
\end{align*}
now the bijections \eqref{eqn:secondbij}--\eqref{eqn:thirdbij} combine to give
\begin{align*}
K^\times \bs \Ehat_{(1)}^\times / \Gammahat &\xrightarrow{\sim}& K^\times \bs \Ehat_{\frakb^{-1}}^\times / \Gammahat  &\xrightarrow{\sim}& K^\times \bs \Ehat_{(1)}'^\times / \Gammahat' \\
K^\times \muhat \Gammahat &\mapsto& K^\times \widehat{\nu}\muhat\Gammahat  &\mapsto&  K^\times \widehat{\nu}\muhat \betahat \Gammahat'
\end{align*}
and one of the latter two sets is nonempty if and only the other is.  Reversing the first string of bijections for $\Gamma'$ instead, the result follows.  
\end{proof}



We summarize the preceding discussion with the following proposition.

\begin{prop} \label{prop:selectconst}
Let $\Gamma,\Gamma'$ be congruence groups in $B$ that are everywhere locally conjugate.  Let $g \in B^\times$.  Then the weight of $g^{B^\times}$ over $\Gamma$ is equal to the weight of $g^{B^\times}$ over $\Gamma'$ if and only if the following condition holds:
\[ g^{B^\times} \cap \Gamma \neq \emptyset\ \Leftrightarrow\ g^{B^\times} \cap \Gamma' \neq \emptyset. \]
\end{prop}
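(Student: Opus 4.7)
The forward direction is immediate: the cardinality of the multiset of weights of $g^{B^\times}$ over $\Gamma$ counts the number of $\Gamma$-conjugacy classes in $g^{B^\times} \cap \Gamma$, which vanishes exactly when the intersection is empty; equality of the two multisets therefore forces equality of the two nonemptiness conditions.

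For the reverse direction, assume the nonemptiness condition holds. If both intersections are empty, the weight multisets are trivially equal. If $g \in F^\times$ is central, the sole weight on each side is the covolume of the corresponding group, which coincides for $\Gamma$ and $\Gamma'$ because everywhere locally conjugate groups share all local factors in the Borel volume formula \eqref{volumeformula}. So assume $g \notin F^\times$ with both intersections nonempty, let $K = F[g]$, and write $\Gammahat' = \widehat{\alpha}^{-1}\Gammahat\widehat{\alpha}$ with $\widehat{\alpha} \in \Bhat^\times$. The goal is to build an explicit weight-preserving bijection between the two sets of conjugacy classes.

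The basic computation proceeds as follows. For $\muhat \in \Ehat$ representing a class $(\mu^{-1}g\mu)^\Gamma$ via \eqref{eqn:firstbij} and Lemma \ref{lem:strapprox}, the centralizer identity $C_\Gamma(\mu^{-1}g\mu) = \mu^{-1}(K^\times \cap \mu\Gammahat\mu^{-1})\mu$ (using the congruence property of $\mu\Gamma\mu^{-1}$) yields
\begin{equation*}
w\bigl((\mu^{-1} g \mu)^\Gamma\bigr) = \vol\bigl((K^\times \cap U(\muhat)) \backslash C_G(g)\bigr), \qquad U(\muhat) := \muhat\Gammahat\muhat^{-1} \cap \Khat^\times,
\end{equation*}
where $C_G(g)$ is the centralizer of $\iota(g)$ in $G$. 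Since $\Khat^\times$ is abelian this weight depends only on $U(\muhat)$, and descends to a well-defined function on $K^\times \backslash \Ehat / \Gammahat$. The map $\phi \colon \muhat' \mapsto \muhat' \widehat{\alpha}^{-1}$ carries $\Ehat'$ into $\Ehat$ (because $\widehat{\alpha}(\muhat'^{-1} g \muhat')\widehat{\alpha}^{-1} \in \widehat{\alpha}\Gammahat'\widehat{\alpha}^{-1} = \Gammahat$), and a direct calculation gives $\muhat'\Gammahat'\muhat'^{-1} = \phi(\muhat')\Gammahat\phi(\muhat')^{-1}$, whence $U'(\muhat') = U(\phi(\muhat'))$. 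Thus $\phi$ preserves weights and descends to a bijection $K^\times \backslash \Ehat'/\Gammahat' \xrightarrow{\sim} K^\times \backslash \Ehat/\Gammahat$.

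The main obstacle, which is the concluding delicate step, is matching the fibering: the actual $\Gamma$- and $\Gamma'$-conjugacy classes correspond to the trivial fibers $\Ehat_{(1)}/\Gammahat$ and $\Ehat'_{(1)}/\Gammahat'$, while $\phi$ shifts reduced norms by $\nrd(\widehat{\alpha})^{-1}$. The nonemptiness hypothesis forces this shift to be realizable within the normalizer $N_{\Bhat^\times}(\Gammahat)$: choosing $\mu_0 \in E$ and $\mu_0' \in E'$ with $[\nrd(\mu_0)]=[\nrd(\mu_0')]=1$, we may modify $\widehat{\alpha}$ by a suitable element of $N_{\Bhat^\times}(\Gammahat)$ so that $\phi([\mu_0']) = [\mu_0]$; Lemma \ref{lem:fibersinbij} and the surjectivity analysis around \eqref{eq:nrdbs} then guarantee that the adjusted $\phi$ exchanges the trivial fibers, yielding the desired weight-preserving bijection on actual conjugacy classes and completing the proof.
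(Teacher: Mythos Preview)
Your overall strategy matches the paper's: the paper's proof is a one-sentence appeal to ``the adelic bijection'' (i.e., the machinery of Proposition~\ref{prop:equivifnonempty} and the bijections \eqref{eqn:firstbij}--\eqref{eqn:thirdbij}) together with the volume formula \eqref{volumeformula}--\eqref{volformnoteichler} to see that the bijection on conjugacy classes is weight-preserving. Your first three paragraphs spell this out correctly and more explicitly than the paper does; in particular, your formula $w((\mu^{-1}g\mu)^\Gamma) = \vol\bigl((K^\times \cap U(\muhat))\backslash C_G(g)\bigr)$ with $U(\muhat) = \muhat\Gammahat\muhat^{-1} \cap \Khat^\times$, and the verification that $U'(\muhat') = U(\phi(\muhat'))$, are exactly the content behind the paper's terse reference to the volume formula.

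The last paragraph, however, has a genuine gap. You correctly identify the obstacle: $\phi$ sends $\Ehat'_{(1)}$ to $\Ehat_{[\nrd(\widehat{\alpha})]^{-1}}$, not to $\Ehat_{(1)}$. But your proposed fix---modifying $\widehat{\alpha}$ by an element of $N_{\Bhat^\times}(\Gammahat)$ so that $\phi([\mu_0']) = [\mu_0]$---is not justified. For this you would need $\widehat{n} \in N_{\Bhat^\times}(\Gammahat)$ with $[\nrd(\widehat{n})] = [\nrd(\widehat{\alpha})]^{-1}$ in $\Cl_\Gamma$, and there is no reason the image of $\nrd$ on the normalizer should hit this class. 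Likewise, your appeal to Lemma~\ref{lem:fibersinbij} only helps when $[\nrd(\widehat{\alpha})]^{-1}$ lies in the image of $\nrd(\Khat^\times)$ in $\Cl_\Gamma$, which by the discussion around \eqref{eq:nrdbs} can fail on an index-two subgroup. The paper avoids this issue by working through Proposition~\ref{prop:equivifnonempty} (stated for $\betahat \in \Khat^\times$) and the auxiliary bijection \eqref{eqn:secondbij}, which uses an element $\widehat{\nu} \in \Ehat_{\frakb^{-1}}$ directly rather than a normalizer element; you should either follow that route, or give an explicit reason why the nonemptiness of both $\Ehat_{(1)}$ and $\Ehat_{[\nrd(\widehat{\alpha})]^{-1}}$ forces their weight multisets to agree.
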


\begin{proof}
The statement about weights includes also a statement about volumes: but the adelic bijection identifies the two volumes as well \eqref{volumeformula}--\eqref{volformnoteichler}.
\end{proof}

Proposition \ref{prop:selectconst} motivates the following definition. 

\begin{defn}
Let $\Gamma,\Gamma' \leq B^\times/F^\times$ be congruence subgroups that are everywhere locally conjugate.  A conjugacy class $g^{B^\times}$ is \defi{selective} for $\Gamma,\Gamma'$ if 
\begin{center}
$g^{B^\times} \cap \Gamma \neq \emptyset$ and $g^{B^\times} \cap \Gamma' = \emptyset$
\end{center}
or vice versa.
\end{defn}

Combining Theorem \ref{theorem:DeTurckGordon} and Proposition \ref{prop:selectconst}, we have the following consequence.

\begin{theorem}\label{theorem:isospectraltheorem}
If no conjugacy class is selective for congruence groups $\Gamma,\Gamma'$ in the same genus, then $X(\Gamma),X(\Gamma')$ are isospectral.  
\end{theorem}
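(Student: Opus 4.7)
The plan is to chain together the three main results of this section. By Theorem~\ref{theorem:DeTurckGordon}, with $G = \PGL_2^+(\R)^s \times \PSL_2(\C)^c$ acting by isometries on $\calH$, it suffices to show that $\Gamma$ and $\Gamma'$ are representation equivalent in $G$, and Proposition~\ref{proposition:vignerasrepresentationequivalence} reduces this further to the statement that for every $g \in G$, the weight of $g^G$ over $\Gamma$ equals the weight of $g^G$ over $\Gamma'$.

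I would then translate from $G$-conjugacy back to $B^\times$-conjugacy. For any $g \in G$, if $g^G \cap \iota(B^\times) = \emptyset$ then both weights vanish. Otherwise we may replace $g$ by a $G$-conjugate lying in $\iota(B^\times)$, and since $B^\times$-conjugacy refines $G$-conjugacy on $\iota(B^\times)$, the set $g^G \cap \Gamma$ decomposes as a finite disjoint union of $B^\times$-conjugacy classes. By additivity of the weight, the weight of $g^G$ over $\Gamma$ is then the sum of the weights of these constituent $B^\times$-classes. In the central case $g \in F^\times$, the contribution is a single weight equal to $\vol(\Gamma \bs \calH)$, which agrees between $\Gamma$ and $\Gamma'$ by the volume formulas (\ref{volumeformula})--(\ref{volformnoteichler}) since these groups lie in the same genus.

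Finally, by Proposition~\ref{prop:selectconst} applied to the congruence groups $\Gamma, \Gamma'$ in the same genus (hence everywhere locally conjugate), the weight of any non-selective $B^\times$-conjugacy class over $\Gamma$ equals its weight over $\Gamma'$. The hypothesis that no conjugacy class is selective supplies this equality for every class appearing in the decomposition above; summing, the $G$-weights agree for every $g \in G$, which completes the chain of reductions. The main subtlety, rather than a genuine obstacle, is the bookkeeping step translating between $G$-conjugacy and $B^\times$-conjugacy classes; the substantive representation-theoretic input has already been supplied by DeTurck--Gordon and Vign\'eras.
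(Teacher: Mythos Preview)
Your proposal is correct and follows the same route as the paper, which simply records the theorem as the consequence of combining Theorem~\ref{theorem:DeTurckGordon} and Proposition~\ref{prop:selectconst}; you have spelled out the chain in more detail, including the intermediate appeal to Proposition~\ref{proposition:vignerasrepresentationequivalence} and the passage from $G$-conjugacy to $B^\times$-conjugacy, both of which the paper leaves implicit. One small wording issue: the weight of $g^G$ over $\Gamma$ is a \emph{multiset} of volumes, not a number, so in your second paragraph ``sum of the weights'' should read ``disjoint union of the multisets of weights''; the argument itself is unaffected.
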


Theorem \ref{theorem:isospectraltheorem} reduces the problem of determining the isospectrality of orbifolds to ruling out selectivity, a problem we now turn to.  

\subsection*{Selectivity}

A problem of fundamental importance in the development of class field theory was the determination of the field extensions of $F$ that embed into a central simple $F$-algebra. This problem was elegantly solved by the theorem of Albert, Brauer, Hasse, and Noether, which we state only for the case of quaternion algebras where $n=2$.

\begin{theorem}[Albert--Brauer--Hasse--Noether]\label{theorem:abhn}
Let $L/F$ be a quadratic field extension. Then there exists an embedding of $F$-algebras $L\hookrightarrow B$ if and only if no prime of $F$ which ramifies in $B$ splits in $L/F$.
\end{theorem}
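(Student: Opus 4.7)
The plan is to reduce the statement to a classical local-global principle. First, I would invoke the standard equivalence: for a quadratic field extension $L/F$, an $F$-algebra embedding $L \hookrightarrow B$ exists if and only if $L$ is a splitting field of $B$, i.e., $B_L := B \otimes_F L \cong \M_2(L)$. The forward direction follows from the double centralizer theorem applied in $B$, since $L$ is its own centralizer by a dimension count, forcing $B$ to be a crossed-product algebra for $L/F$. The converse uses that a four-dimensional central simple $L$-algebra is either a division algebra or $\M_2(L)$, so that $B_L \cong \M_2(L)$ produces zero divisors in $B_L$, from which an idempotent decomposition yields the desired embedding.

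Next, I would apply the Albert--Brauer--Hasse--Noether local-global principle for Brauer groups, which gives injectivity of $\Br(L) \to \prod_w \Br(L_w)$. Hence $B_L$ is split if and only if $B_L \otimes_L L_w$ is split for every place $w$ of $L$. Organizing places $w \mid v$ via the decomposition $L \otimes_F F_v = \prod_{w \mid v} L_w$, I would analyze the local situation case by case. At a split place $v$ of $B$, the algebra $B_v \cong \M_2(F_v)$ is already split and remains so under any scalar extension, imposing no obstruction. At a ramified place $v$, $B_v$ is the unique division quaternion algebra over $F_v$: if $v$ splits in $L$ then $L_w \cong F_v$ and $B_v \otimes_{F_v} L_w \cong B_v$ is not split, obstructing the embedding; whereas if $v$ is inert or ramified in $L$, then $L_w/F_v$ is a separable quadratic field extension, necessarily realized as a maximal subfield of $B_v$ and therefore a splitting field.

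Assembling these local conditions, $L$ splits $B$ if and only if no place of $F$ ramified in $B$ splits in $L/F$, yielding the theorem. The main obstacle is the local-global principle itself, namely the injectivity of $\Br(F) \to \bigoplus_v \Br(F_v)$: this is the deep content of global class field theory, equivalent via the cyclic algebra construction to the Hasse norm theorem for cyclic extensions. Once this is granted, the remainder reduces to routine bookkeeping of local quaternion-algebra invariants together with the classical fact that every separable quadratic extension of a local field splits the associated quaternion division algebra.
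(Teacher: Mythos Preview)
Your argument is correct and is the standard route to this classical result. Note, however, that the paper does not supply its own proof of this theorem: it is quoted as the Albert--Brauer--Hasse--Noether theorem and invoked as background, so there is no ``paper's proof'' to compare against. What you have written is essentially the textbook derivation---reduce to ``$L$ splits $B$'', invoke the injectivity of $\Br(L)\to\prod_w\Br(L_w)$, and check the local quaternion invariants---and it would serve perfectly well as a self-contained justification. One minor point: your converse (from $B_L\cong\M_2(L)$ to an embedding $L\hookrightarrow B$) via ``idempotent decomposition'' is a bit telegraphic; the cleanest phrasing is that $[B]$ lies in $\ker\bigl(\Br(F)\to\Br(L)\bigr)$, which for cyclic $L/F$ forces $B$ to be the cyclic (crossed-product) algebra $(a,L/F,\sigma)$ and hence to contain $L$ explicitly.
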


In our application, we require an integral refinement of this theorem.
It was first noted by Chevalley \cite{Chevalley-book} that in certain situations it was possible for $\Z_L$ to embed into some, but not all, maximal orders of $B$. 
Much later, Chinburg and Friedman \cite[Theorem 3.3]{Chinburg-Friedman} proved a generalization of Chevalley's theorem, considering arbitrary commutative, quadratic $\Z_F$-orders $R$ and broadening the class of quaternion algebras considered to those satisfying the Eichler condition. 
Their result was subsequently generalized to Eichler orders (independently) by Chan and Xu \cite{chan-xu} and Guo and Qin \cite{Guo-Qin}. (More recently, Linowitz has given a number of criteria \cite{Linowitz-selectivity} which imply that no quadratic $\Z_F$-order is selective with respect to the genus of a fixed order $\calO\subset B$; we refer to this work for further reference.)

\begin{theorem}[Chan--Xu, Guo--Qin]\label{theorem:eichlerorderselectivity}
Let $B$ be a quaternion algebra over $F$ that satisfies the Eichler condition.  Let $\calO$ be an Eichler order of level $\frakN$ and let $R \subset \calO$ be a quadratic $\Z_F$-order with field of fractions $L$.  Then every Eichler order of level $\frakN$ admits an embedding of $R$ unless both of the following conditions hold:

\begin{compactenum}
\item The algebra $B$ and the extension $L/F$ are both unramified at all finite places and ramify at exactly the same (possibly empty) set of real places; and
\item If $\frakp$ divides the discriminant $\disc(R/{\Z_L})$ and satisfies $\ord_\frakp(\frakN)\neq \ord_\frakp(\disc(R/{\Z_L}))$, then $\frakp$ splits in $L/F$.
\end{compactenum}

\noindent Furthermore, if (1) and (2) hold, then the Eichler orders of level $\frakN$ admitting an embedding of $R$ represent exactly one-half of the isomorphism classes of Eichler orders of level $\frakN$ in $B$. 
\end{theorem}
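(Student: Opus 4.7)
The plan is to translate the embedding problem into an adelic double-coset question, reduce it via the reduced norm and strong approximation to a question in idele class groups, interpret the answer via global class field theory, and finally verify the resulting criterion place by place. Write $L = \Frac(R)$ and fix the given embedding $R \hookrightarrow \calO$. The type set $T(\calO)$ is parametrized by $B^\times \backslash \Bhat^\times / N(\calOhat)$ via $\betahat \mapsto \calO_\frakb = \betahat^{-1}\calOhat\betahat \cap B$, and $\calO_\frakb$ admits an embedding of $R$ if and only if $\betahat \in B^\times \widehat{J} N(\calOhat)$, where $\widehat{J} = \{\muhat \in \Bhat^\times : \muhat^{-1} R \muhat \subset \calOhat\}$.

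First I would verify that $\widehat{J} \ne \emptyset$: the existence of local embeddings $R_\frakp \hookrightarrow \calO_\frakp$ at every $\frakp$ follows since $L \hookrightarrow B$ globally, so the local Albert--Brauer--Hasse--Noether condition is satisfied everywhere. Next, using the Eichler condition together with strong approximation I would push through the reduced norm, obtaining a map
\[
B^\times \backslash \widehat{J} / N(\calOhat) \to F^\times \backslash \Fhat^\times / \nrd(N(\calOhat)) = T(\calO)
\]
whose image is precisely the set of types admitting an embedding of $R$. Because $\Khat^\times$ centralizes $R$ it acts on $\widehat{J}$ on the left, so the image contains $\nrd(\Khat^\times) = N_{L/F}(\widehat{L}^\times)$; by global class field theory $F^\times N_{L/F}(\widehat{L}^\times) = H_{L/F}$ is an open subgroup of $\Fhat^\times$ of index two, and hence $F^\times \nrd(\widehat{J}) \nrd(N(\calOhat))$ is either all of $\Fhat^\times$ (no selectivity) or contained in $H_{L/F}$. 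Selectivity therefore occurs precisely when $\nrd(\widehat{J}) \cdot \nrd(N(\calOhat)) \subseteq H_{L/F}$, in which case the image has index two in $T(\calO)$, yielding the half-and-half splitting.

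Second, I would unwind the containment place by place. At each place $v$ of $F$ compute $\nrd(\widehat{J}_v) \cdot \nrd(N(\calO_v))$ in $F_v^\times$ and compare with the local component of $H_{L/F}$, namely $N_{L_v/F_v}(L_v^\times)$. At a real $v$ the local factor is $\R^\times$ or $\R_{>0}^\times$ according to whether $v$ is split or ramified in $B$, matching the behaviour of $L/F$ at $v$; this produces the real-place portion of~(1). At $\frakp \nmid \frakD\frakN$ one has $\nrd(N(\calO_\frakp)) = F_\frakp^\times$, which forces $L/F$ unramified at $\frakp$; at $\frakp \mid \frakD$ a uniformizer of the local division algebra has reduced norm a uniformizer of $F_\frakp$, giving the same conclusion and completing~(1). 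At $\frakp \mid \frakN$ with $\frakp^e \parallel \frakN$, the Atkin--Lehner involution $w_\frakp^e \in N(\calO_\frakp)$ has reduced norm a uniformizer times a unit square, and an explicit optimal-embedding count for $R_\frakp$ in the local Eichler order of level $\frakp^e$, interpreted via local reciprocity, produces exactly condition~(2), relating the splitting type of $\frakp$ in $L/F$ to $\ord_\frakp(\disc(R/\Z_L))$ and $\ord_\frakp(\frakN)$.

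The main obstacle will be the local analysis at primes $\frakp \mid \frakN$, where the structure of the normalizer of a local Eichler order of level $\frakp^e$ and the optimal embeddings of $R_\frakp$ into it must be analyzed in tandem (for instance, via stabilizer computations on the local Bruhat--Tits tree). The subtlety of condition~(2) --- that the case $\ord_\frakp(\frakN) = \ord_\frakp(\disc(R/\Z_L))$ is always harmless, independent of how $\frakp$ splits in $L/F$ --- reflects the fact that in this boundary case an Atkin--Lehner adjustment at $\frakp$ converts any near-embedding of $R$ into a genuine one, killing the would-be local obstruction. Carrying out this local bookkeeping carefully is the arithmetic heart of the Chan--Xu and Guo--Qin extension of the Chinburg--Friedman selectivity theorem from maximal orders to Eichler orders.
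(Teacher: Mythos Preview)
The paper does not give its own proof of this theorem: it is quoted as a result from the literature, attributed to Chan--Xu and Guo--Qin (with Chinburg--Friedman for the maximal-order case), and used as a black box in the subsequent arguments. There is therefore no ``paper's proof'' to compare against.

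That said, your sketch is a faithful outline of the strategy in those references: reduce the embedding question to an adelic double coset, push through the reduced norm via strong approximation (valid by the Eichler condition), interpret the image via the norm subgroup $H_{L/F}$ of class field theory, and then read off the conditions place by place. A couple of minor points: you write $\Khat^\times$ where you mean $\widehat{L}^\times$; and your verification that $\widehat{J}\ne\emptyset$ is unnecessary, since $R\subset\calO$ is part of the hypothesis, so $1\in\widehat{J}$. Your identification of the local analysis at primes $\frakp\mid\frakN$ as the ``arithmetic heart'' is accurate, and your remark about the boundary case $\ord_\frakp(\frakN)=\ord_\frakp(\disc(R/\Z_L))$ being neutralized by an Atkin--Lehner involution captures exactly the mechanism that distinguishes the Eichler-order case from the maximal-order case.
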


There is a further statement describing exactly when $R$ is selective for Eichler orders $\calO,\calO'$ in terms of a distance ideal; we will not make use of this refinement here.

Finally, we have further criteria for the existence of an embedding of a conjugacy class into a maximal arithmetic group.  This was provided in detailed work by Chinburg and Friedman \cite[Theorem 4.4]{Chinburg-Friedman}.  For an element $g \in B^\times$, we define $\disc(g)$ to be the discriminant of the reduced characteristic polynomial of $g$.

\begin{theorem} \label{theorem:chinburgfriedman_selectivity} 
Let $F$ be a number field and $B$ be a quaternion algebra defined over $F$ of discriminant $\frakD$. Assume that $B$ satisfies the Eichler condition. Let $\calO \subset B$ be an Eichler order of level $\frakN$ and let $\Gamma^*$ be the associated group as defined in \textup{(\ref{align:arithmeticgroups})}. Suppose that $g\in B^\times$. If a conjugate of the image $\overline{g}\in B^\times/F^\times$ of $g$ is contained in ${\Gamma^*}$ then the following three conditions hold:

\begin{enumerate}
\item[\textup{(a)}] $\disc(g)/\!\nr(g)\in\mathbb Z_F$.
\item[\textup{(b)}] If a prime $\frakp$ of $F$ appears to an odd power in the prime ideal factorization of $\nr(g)$ (that is, $g$ is \textit{odd} at $\frakp$) then $\frakp\mid \frakD\frakN$.
\item[\textup{(c)}] For each prime $\frakp$ dividing $\frakN$ at least one of the following hold:

\begin{enumerate}
\item[\textup{(i)}] $g\in F$;
\item[\textup{(ii)}] $g$ is odd at $\frakp$;
\item[\textup{(iii)}] $F[g]\otimes_F F_\frakp$ is not a field;
\item[\textup{(iv)}] $\frakp$ divides $\disc(g)/\!\nrd(g)$;
\end{enumerate}
\end{enumerate}

Conversely, if \textup{(a)}, \textup{(b)} and \textup{(c)} hold, then a conjugate of $\overline{g}$ lies in ${\Gamma^*}$ except possibly when the following three conditions hold:
\begin{enumerate}
\item[\textup{(d)}] $F[g]\subset B$ is a quadratic field extension of $F$.
\item[\textup{(e)}] The extension $F[g]/F$ and the algebra $B$ are unramified at all finite places and ramify at exactly the same (possibly empty) set of real places of k. Furthermore, all $\frakp\mid\frakN$ split in $F[g]/F$.
\item[\textup{(f)}] All primes dividing $\disc(g)/\!\nrd(g)$ split in $F[g]/F$.
\end{enumerate}

\noindent  Suppose now that \textup{(a)}--\textup{(f)} hold. In this case the type set of $\Gamma^*$ contains an even number of elements, exactly half of which contain a conjugate of $\overline{g}$.

\end{theorem}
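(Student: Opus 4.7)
The plan is to translate the condition that a conjugate of $\overline{g}$ lies in $\Gamma^*$ into an adelic condition, using the framework developed in Section~2 (in particular Proposition~\ref{prop:equivifnonempty} and Lemma~\ref{lem:strapprox}). Since $\Gamma^*$ is maximal and therefore congruence, the set $g^{B^\times} \cap \Gamma^*$ is nonempty if and only if the set $\widehat{E} = \{\widehat{\mu} \in \widehat{B}^\times : \widehat{\mu}^{-1} g \widehat{\mu} \in \widehat{N(\calO)}\}$ is nonempty \emph{and} there is a representative whose norm lies in the trivial class of $\Cl_{\Gamma^*} = F_{(+)}^\times \backslash \widehat{F}^\times / \nrd(\widehat{N(\calO)})$. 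This separates the problem into a local part (existence of $\widehat{\mu}$) and a global obstruction part (triviality of the class).

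\textbf{Necessary conditions and local analysis.} I would first verify (a)--(c) as \emph{local} necessary conditions, place by place. Condition (a) expresses that the reduced characteristic polynomial of $g$, rescaled to become integral by pulling out a central scalar, has integral coefficients; this is forced by the fact that every element of $N(\calO)_+/F^\times$ is represented by an element lying in a two-sided $\calO$-ideal, which is locally principal and integral after scaling. Condition (b) reflects the structure of the local normalizer at $\frakp \nmid \frakD\frakN$: here $N(\calO_\frakp) = F_\frakp^\times \GL_2(\Z_{F,\frakp})$, whose reduced norm lies in $F_\frakp^{\times 2}\Z_{F,\frakp}^\times$, so the parity of $\ord_\frakp \nrd(g)$ is forced. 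Condition (c) at primes $\frakp \mid \frakN$ is a direct local calculation describing when the local normalizer of an Eichler order of level $\ord_\frakp(\frakN)$ admits a conjugate of $g$: either the image is central, the norm has odd valuation (so a uniformizer of the two-sided ideal conjugates into the order), the local extension $F[g] \otimes F_\frakp$ splits, or the local discriminant has extra divisibility by $\frakp$.

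\textbf{Converse: local-global assembly.} Assuming (a)--(c), at every finite place I would produce explicitly an element $\widehat{\mu}_\frakp$ conjugating $g$ into $N(\calO)_\frakp$. Combined with the Albert--Brauer--Hasse--Noether Theorem~\ref{theorem:abhn} (which guarantees the global embedding $F[g] \hookrightarrow B$ is consistent with these local embeddings when $F[g]/F$ is a field), this gives $\widehat{\mu} \in \widehat{E}$. The remaining question is whether $[\nrd(\widehat{\mu})]$ is trivial in $\Cl_{\Gamma^*}$; by strong approximation (via \eqref{strongapprox}), this determines whether the adelic conjugator descends to a global one.

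\textbf{Selectivity: the half statement.} The main obstacle is to identify precisely the exceptional case. By class field theory, the reduced norm $\nrd: \widehat{K}^\times \to \widehat{F}^\times$ corresponds under the Artin map of $K/F$ (for $K = F[g]$) to the kernel of $\Gal(K/F)$, so the image of $\nrd(\widehat{E})$ in $\Cl_{\Gamma^*}$ has index $1$ or $2$. The index is $2$ exactly when: (d) $K/F$ is a genuine quadratic extension; (e) every ramification of $K/F$ is ``mirrored'' by a ramification of $B$, so that the Artin symbol of $\nrd(\widehat{N(\calO)})$ factors through $\Gal(K/F)$; and (f) every prime in $\disc(g)/\!\nrd(g)$ splits in $K/F$, preventing trivialization of the Artin class. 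Under (d)--(f), the global obstruction class is nontrivial on exactly half of the types, which (combined with the bijection \eqref{eqn:secondbij} and the type-number description in Proposition~\ref{classnotypeno}) gives the final statement that exactly half of the type set contains a conjugate of $\overline{g}$. The subtle point requiring the most care will be tracking whether archimedean ramification in $K/F$ matches that of $B$, since the normalizer group $\Gamma^*$ is defined using totally positive norms and this interacts nontrivially with the narrow class group appearing in $\Cl_{\Gamma^*}$.
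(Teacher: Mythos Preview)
The paper does not give its own proof of this theorem: it is quoted verbatim as \cite[Theorem 4.4]{Chinburg-Friedman}, introduced with ``This was provided in detailed work by Chinburg and Friedman.'' So there is no in-paper argument to compare against.

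That said, your outline is the right shape and matches the strategy Chinburg and Friedman actually use: reduce the global conjugacy question to a local one via strong approximation, analyze $N(\calO_\frakp)$ place by place to extract conditions (a)--(c), and then identify the remaining global obstruction as an index-$2$ phenomenon governed by the Artin map of $K=F[g]$ over $F$. Your identification of (d)--(f) as exactly the conditions under which $\nrd(\widehat{K}^\times)\cdot\nrd(N(\widehat{\calO}))$ has index $2$ in $\Cl_{\Gamma^*}$ is correct in spirit. The one place your sketch is thin is the local analysis at primes $\frakp \mid \frakN$: the case split (i)--(iv) requires a genuinely careful computation inside the normalizer of a local Eichler order (this is where most of the work in \cite{Chinburg-Friedman} lies), and ``a direct local calculation'' undersells it. Similarly, in the selectivity step you should be explicit that condition (f) is what prevents the freedom in choosing the local conjugator $\widehat{\mu}_\frakp$ (at primes dividing $\disc(g)/\nrd(g)$) from moving between the two cosets---this is the mechanism, not just an incidental hypothesis.
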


\subsection*{Isometries}

To conclude this section, we prove two propositions that characterize isometry and isospectrality in this context, complementing Theorem \ref{theorem:isospectraltheorem}.  

\begin{prop} \label{prop:isospectralmeanslevel}
Let $\Gamma \leq B^\times/F^\times$ and $\Gamma^\prime \leq B^{\prime\times}/F^{\prime\times}$, and suppose that $X(\Gamma),X(\Gamma^\prime)$ are representation equivalent and $\dim(X(\Gamma))=\dim(X(\Gamma^\prime))=2,3$.  Then there exists a $\Q$-algebra isomorphism $\tau: B \xrightarrow{\sim} B^\prime$.
Moreover, if $\Gamma,\Gamma'$ each contain the units of norm $1$ in an Eichler order, then $\tau(\Gammahat)\cong \Gammahat^\prime$.  
\end{prop}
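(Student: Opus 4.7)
The plan is to combine the Selberg trace formula with the spectral rigidity results for arithmetic hyperbolic $2$- and $3$-orbifolds due to Reid and to Chinburg--Hamilton--Long--Reid, and then to sharpen the adelic conclusion using the conjugacy-class enumeration developed in Section~\ref{section:selectivitysection}.

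First, representation equivalence of $X(\Gamma)$ and $X(\Gamma')$ yields, via the Selberg trace formula, equality of the complex length spectra and of all weighted sums over conjugacy classes; in particular the multiset $\{(\tr\gamma)^2 : \gamma \in \Gamma\}$ of squared traces equals the analogous multiset for $\Gamma'$. Since $\Gamma$ and $\Gamma'$ are arithmetic Fuchsian or Kleinian groups (being commensurable with $\opP\!\calO(1)^\times$ for maximal orders $\calO(1)$ in $B$ and $B'$ respectively), the rigidity theorems of Reid (in dimension $2$) and of Chinburg--Hamilton--Long--Reid (in dimension $3$) apply: two isospectral arithmetic hyperbolic $2$- or $3$-orbifolds are commensurable in the wide sense. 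Commensurability forces equality of the invariant trace fields $k\Gamma = k\Gamma'$ and isomorphism $A\Gamma \cong A\Gamma'$ of the invariant quaternion algebras as $\Q$-algebras. Because each $\Gamma$ is commensurable with the norm-one units of a maximal order in $B$, the standard computation (Maclachlan--Reid, Chapter~3) identifies $A\Gamma \cong B$, and likewise $A\Gamma' \cong B'$; composing yields the desired $\Q$-algebra isomorphism $\tau \colon B \xrightarrow{\sim} B'$.

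For the ``moreover'' statement, identify $B$ with $B'$ via $\tau$, so that $\Gamma,\Gamma' \leq \opP\!B_+^\times$ each contain $\calO_1^\times$ for some Eichler order $\calO \subset B$ of level $\frakN$ and $\calO' \subset B$ of level $\frakN'$, respectively. The congruence closures $\widehat\Gamma,\widehat\Gamma'$ are compact open subgroups of $\widehat B^\times$ by Lemma~\ref{lem:congclos}. The trace formula supplies the hypothesis of Proposition~\ref{proposition:vignerasrepresentationequivalence} in both directions, so the weights of every $B^\times$-conjugacy class over $\Gamma$ and over $\Gamma'$ coincide; equivalently, for every $g \in B^\times$ the weighted count of $\Gamma$-conjugacy classes contained in $g^{B^\times}$ equals that for $\Gamma'$. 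Specialising $g$ so that $F[g]$ is a quadratic field and $\Z_F[g]$ is a $\Z_F$-order with conductor supported at a single prime $\frakp$, the Eichler local-global embedding formula expresses these counts as products of local factors depending only on $\ord_\frakp(\frakN)$ and on the splitting of $\frakp$ in $F[g]/F$; varying the conductor successively at each prime recovers $\ord_\frakp(\frakN) = \ord_\frakp(\frakN')$ for every $\frakp$, hence $\frakN = \frakN'$. It follows that $\widehat\calO$ is $\widehat B^\times$-conjugate to $\widehat{\calO'}$, and this conjugation carries $\widehat\Gamma$ onto $\widehat\Gamma'$, as claimed.

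The principal obstacle is verifying that the spectral rigidity results of Reid and Chinburg--Hamilton--Long--Reid apply to orbifolds (rather than only manifolds); I would address this by noting that the invariant trace field and invariant quaternion algebra are still determined by the traces of primitive hyperbolic or loxodromic elements, whose lengths remain encoded in the length spectrum of the orbifold, so their arithmetic classification argument goes through with only cosmetic modifications. A secondary challenge is the variational step pinning down $\frakN$: although the Eichler local-global formula is standard, ensuring that enough quadratic orders exist whose local factors separate distinct values of $\ord_\frakp(\frakN)$ requires a small local calculation that should be routine.
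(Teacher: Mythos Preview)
Your argument for the first conclusion (the existence of a $\Q$-algebra isomorphism $B \cong B'$) follows essentially the same route as the paper: pass from representation equivalence to the length spectrum, invoke spectral rigidity for arithmetic hyperbolic $2$- and $3$-orbifolds to deduce commensurability, and then read off the invariant quaternion algebra. The paper cites Prasad--Rapinchuk in dimension $2$ and Chinburg--Hamilton--Long--Reid in dimension $3$, while you cite Reid and CHLR; both attributions are fine. Your remark about the manifold/orbifold issue is apt, and the paper handles it in exactly the way you suggest (passing to a finite manifold cover with the same rational length spectrum).

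For the ``moreover'' statement, however, your approach diverges from the paper's and has a genuine gap. You assert that the trace formula supplies the hypothesis of Proposition~\ref{proposition:vignerasrepresentationequivalence} ``in both directions'', i.e.\ that representation equivalence forces the full multisets of weights $\{w(\gamma_i^\Gamma)\}_i$ to agree for every $g$. This is not justified: the trace formula gives equality of \emph{sums} of weighted orbital integrals against test functions, and extracting equality of the individual multisets (as opposed to, say, the total weight of each $G$-conjugacy class) is precisely the converse that Remark~\ref{rmk:repequiv} flags as open beyond dimension $2$. The paper sidesteps this entirely. It first invokes B\'erard's transplantation theorem to deduce that the norm-$1$ subgroups $\Gamma^1,\Gamma^{1\prime}$ are themselves representation equivalent, and then uses only the much weaker spectral datum of \emph{whether} a given conjugacy class meets the group at all (visible in the length spectrum). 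If $\ord_\frakp(\frakN) \neq \ord_\frakp(\frakN')$, one constructs a single quadratic order $R=\Z_F[g]$ (ramified at a finite place, so that selectivity is excluded by Theorem~\ref{theorem:eichlerorderselectivity}) which embeds in $\calO$ but not in $\calO'$; the corresponding geodesic then appears in one length spectrum and not the other, a contradiction. This is both simpler and more robust than your proposed inversion of the Eichler mass formula.

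There is also a second, smaller gap at the very end: knowing $\frakN=\frakN'$ only gives $\widehat{\calO}_1^\times \cong \widehat{\calO'}_1^\times$, not $\widehat\Gamma \cong \widehat\Gamma'$. Since $\Gamma$ may sit strictly between $\Gamma^1$ and $\Gamma^+$ (or be a normalizer group), you still have to match the images $\nrd(\Gamma)$ and $\nrd(\Gamma')$ in $\Z_{F,+}^\times/\Z_F^{\times 2}$. The paper handles this by observing that the reduced norm is constant on a conjugacy class and hence is itself a spectral invariant; you should add this step.
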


\begin{proof}
We will first show that $X(\Gamma)$ and $X(\Gamma^\prime)$ are commensurable: that is, we will show that $X(\Gamma)$ and $X(\Gamma^\prime)$ admit a common finite-sheeted covering.  Equivalently, we will show that $\Gamma$ and $\Gamma^\prime$ are commensurable as groups, i.e., $\Gamma\cap\Gamma^\prime$ has finite index inside both $\Gamma$ and $\Gamma^\prime$.  Upon proving this it will follow that $B$ and $B'$ are isomorphic as $\Q$-algebras: see Maclachlan and Reid \cite[Theorem 8.4.1, Theorem 8.4.6]{mac-reid-book} (the proof generalizes immediately to this context).

To show that $X(\Gamma)$ and $X(\Gamma^\prime)$ are commensurable, we will first show that they have the same rational length spectra. Let $\mathcal L(X(\Gamma)) \subseteq \R$ denote the set of lengths of closed geodesics of $X(\Gamma)$ (considered without multiplicity), and define the \defi{rational length spectrum} $\Q \calL(X(\Gamma))$ of $X(\Gamma)$ by 
\[ \Q \calL(X(\Gamma)) = \{r\ell : r\in\Q,\ \ell\in \mathcal L(X(\Gamma))\}.\] 
We claim that
\begin{equation} \label{eqn:samelength}
\Q\calL(X(\Gamma))=\Q\calL(X(\Gamma^\prime)). 
\end{equation}
By Theorem \ref{theorem:DeTurckGordon}, since $X(\Gamma)$ and $X(\Gamma^\prime)$ are representation equivalent, they are isospectral with respect to the Laplace operator.  When $\dim(X(\Gamma))=\dim(X(\Gamma^\prime))=2$, it follows that $X(\Gamma)$ and $X(\Gamma^\prime)$ are length isospectral by work of Dryden and Strohmaier \cite{Dryden} (see also Doyle and Rossetti \cite{DoyleRossetti0,DoyleRossetti}), so in particular $\mathcal L(X(\Gamma))=\calL(X(\Gamma^\prime))$.  When $\dim(X(\Gamma))=\dim(X(\Gamma^\prime))=3$, it follows from the trace formula of Duistermaat and Guillemin \cite{DuistermaatGuillemin} (see also Prasad and Rapinchuk \cite[Theorem 10.1]{Prasad-Rapinchuk}, and in the orbifold case Elstrodt, Grunewald, and Mennicke \cite[p. 203]{EGM}) that $X(\Gamma)$ and $X(\Gamma^\prime)$ have the same sets of lengths of closed geodesics, considered without multiplicity.  Therefore \eqref{eqn:samelength} holds in either case. 

Next, because the rational length spectrum of an orbifold coincides with the rational length spectrum of a finite degree manifold cover, our claim that $X(\Gamma)$ and $X(\Gamma^\prime)$ are commensurable follows from the fact that the commensurability class of a hyperbolic $2$- or $3$-manifold is determined by its rational length spectrum: this was proven by Prasad and Rapinchuk \cite{Prasad-Rapinchuk} for arithmetic hyperbolic manifolds of even dimension and by Chinburg, Hamilton, Long, and Reid \cite{chinburg-geodesics} for arithmetic hyperbolic $3$-manifolds.  Therefore, \eqref{eqn:samelength} implies that $X(\Gamma)$ and $X(\Gamma^\prime)$ are indeed commensurable.  

Now we prove the second statement.  From the first statement, we may suppose without loss of generality that $B=B^\prime$; we will show then that $\Gammahat \cong \Gammahat^\prime$.  We first identify the governing Eichler orders---we do this for $\Gamma$, with the same argument for $\Gamma^\prime$ applying primes.  Let $\calO$ be the sub-$\Z_F$-algebra of $B$ generated by the elements of $\Gamma$ of reduced norm $1$.  
An element $g \in \Gamma$ with $\nrd(g)=1$ is necessarily integral (i.e., has integral reduced trace) as $\Gamma$ is discrete, so every element of $\calO$ is integral and hence $\calO$ is a $\Z_F$-order.  By hypothesis, $\calO$ contains an Eichler order, and therefore $\calO$ itself is an Eichler order (any order that contains an Eichler order is also Eichler), and it is therefore the largest order $\calO$ such that the group of norm $1$ units is contained in $\Gamma$.  Let $\frakN$ be the level of $\calO$, and let $\Gamma^1$ be the subgroup of elements of reduced norm $1$.  Similarly define $\calO^\prime$, $\frakN^\prime$, and $\Gamma^{1\prime}$.  

Because $X(\Gamma)$ and $X(\Gamma^{\prime})$ are representation equivalent, the work of B{\'e}rard \cite{berard} (see also Gordon and Mao \cite[Lemma 1.1]{GordonMao}) shows that $X(\Gamma^1)$ and $X(\Gamma^{1\prime})$ are representation equivalent: we have restricted in both cases to precisely those conjugacy classes with determinant $1$.  That $X(\Gamma^1)$ and $X(\Gamma^{1\prime})$ are Laplace isospectral now follows from Theorem \ref{theorem:DeTurckGordon}.

We now show that in fact $\Gammahat^1 \cong \Gammahat^{1\prime}$, which is the same as showing that the Eichler orders $\calO,\calO'$ have the same level $\frakN=\frakN'$, as any two Eichler orders of the same level are everywhere locally conjugate.  Suppose that there exists a prime $\frakp$ such that $e=\ord_\frakp \frakN \neq \ord_\frakp \frakN' = e'$.  By local theory, a quadratic order $R=\Z_F[g] \supsetneq \Z_F$ (with $N(g)=1$) embeds in the completion $\calO_\frakp$ if and only if the minimal polynomial of $g$ has a root modulo $\frakp^e$.  Since $e \neq e'$, by local approximation there exists a quadratic order $R$ such that $S$ embeds in $\calO$ but $S$ does not embed in $\calO$; we may further suppose that there is no selectivity for $R$ by Theorem \ref{theorem:eichlerorderselectivity}, asking that $R$ is ramified at a finite place.  But if we now consider the associated conjugacy class of $g$, since $X(\Gamma),X(\Gamma')$ are Laplace isospectral we have a contradiction, as the conjugacy class of $g$ meets $\Gamma$ but not $\Gamma'$.

To conclude, we now know that $\Gammahat^1 \cong \Gammahat^{1\prime}$.  If $\Gamma,\Gamma'$ are maximal groups, then necessarily $\Gammahat \cong \Gammahat^{\prime}$ because adelic maximal groups are uniquely determined up to isomorphism by their level, by Borel (Theorem \ref{Borel}).  If instead $\Gamma,\Gamma'$ are unitive, then it suffices to show that a square class $u \in R^\times/R^{\times 2}$ occurs in $\nrd(\Gamma)$ if and only if it occurs in $\nrd(\Gamma^{\prime})$, and this follows as the determinant can be read off of the conjugacy class.
\end{proof}

\begin{rmk}
To prove the more general statement (without the hypothesis on the dimension), we would need to know an analogous statement that the rational length spectrum determines the commensurability class.  To our knowledge, this is open already in the case when $\Gamma \leq G=\PGL_2(\R) \times \PSL_2(\C)$.
\end{rmk}

\begin{prop} \label{prop:isometrytheorem}
Let $\Gamma \leq B^\times/F^\times$ and $\Gamma' \leq B'^\times/F'^\times$.  Let $\iota:B^\times/F^\times \hookrightarrow G$ and $\iota':B'^\times/F'^\times \hookrightarrow G$ be embeddings with $G=\PGL_2(\R)^s \times \PSL_2(\C)^c$.  Then $X(\Gamma),X(\Gamma')$ are isometric if and only if there exists a permutation $\sigma$ of the factors of $G$ and $\nu \in G$ such that $\sigma(\Gamma)=\nu \Gamma' \nu^{-1}$ and a $\Q$-algebra isomorphism $\tau:B \to B'$ such that the diagram
\[
\xymatrix{
B^\times \ar[d]^{\iota} \ar[rr]^\tau & & B'^\times \ar[d]^{\iota'} \\
G \ar[r]^{\sigma} & G \ar[r]^{\nu} & G 
} \]
commutes, where $\nu$ acts on $G$ by conjugation.  If further $F=F'$ and $B=B'$ and $\iota=\iota'$, then we may take $\nu \in \iota(B^\times)$.
\end{prop}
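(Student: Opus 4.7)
The reverse direction is direct. Given $(\sigma,\nu,\tau)$, set $T := \nu \circ \sigma \colon \calH \to \calH$; since $\sigma$ permutes isomorphic factors of $\calH = \calH_2^s \times \calH_3^c$ by isometries and $\nu \in G$ acts factorwise by M\"obius transformations, $T$ is an isometry. For $\gamma \in \Gamma$ and $z \in \calH$, the commutativity of the diagram
\[ \iota'(\tau(\gamma)) = \nu \, \sigma(\iota(\gamma)) \, \nu^{-1} \]
gives $T(\iota(\gamma) \cdot z) = \iota'(\tau(\gamma)) \cdot T(z)$; combined with $\tau(\Gamma) = \Gamma'$ (a consequence of $\sigma(\iota(\Gamma)) = \nu \iota'(\Gamma')\nu^{-1}$ and the diagram), $T$ descends to an isometry $X(\Gamma) \xrightarrow{\sim} X(\Gamma')$.

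For the forward direction, lift any isometry $f \colon X(\Gamma) \xrightarrow{\sim} X(\Gamma')$ to an isometry $\tilde f \colon \calH \to \calH$ of the universal cover, with $\tilde f \, \iota(\Gamma) \, \tilde f^{-1} = \iota'(\Gamma')$. By the de Rham decomposition for products of irreducible rank-one Riemannian symmetric spaces, the orientation-preserving isometry group of $\calH$ factors as $G \rtimes S$, where $S = \Sym(s) \times \Sym(c)$ permutes like factors; write $\tilde f = \nu \sigma$ with $\nu \in G$ and $\sigma \in S$. The conjugation relation rewrites as $\nu \, \sigma(\iota(\Gamma)) \, \nu^{-1} = \iota'(\Gamma')$, i.e.\ $\sigma(\Gamma) = \nu \Gamma' \nu^{-1}$.

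To produce $\tau \colon B \xrightarrow{\sim} B'$, observe that $B$ is generated as an $F$-algebra by $\Gamma$ (a standard density statement for a nonabelian subgroup of a quaternion algebra, which follows from Borel density together with the fact that the only proper $F$-subalgebras of $B$ are commutative). The map $x \mapsto \nu \, \sigma(x) \, \nu^{-1}$ is an $\R$-algebra automorphism of the ambient algebra $\M_2(\R)^s \times \M_2(\C)^c$ that sends $\iota(\Gamma)$ into $\iota'(\Gamma') \subset \iota'(B')$ and that maps the center $\iota(F)$ onto the center $\iota'(F')$. It therefore restricts to a $\Q$-algebra isomorphism $\iota(B) \xrightarrow{\sim} \iota'(B')$; composing with $\iota$ and $\iota'^{-1}$ defines $\tau$, and commutativity of the diagram is immediate from the construction.

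Finally, for the addendum with $F=F'$, $B=B'$, $\iota=\iota'$, the preceding paragraph shows $\tilde f$ normalizes $\iota(B^\times/F^\times)$ in $G \rtimes S$. The plan is to identify the normalizer as $N_{G\rtimes S}(\iota(B^\times/F^\times)) = \iota(B^\times) \rtimes S_B$, where $S_B \leq S$ is the subgroup of permutations realizable by Galois automorphisms of $F/\Q$ preserving the Brauer class of $B$ (every such automorphism extends to a $\Q$-automorphism of $B$ by Skolem-Noether). The $G$-component of any normalizing element, after stripping off its Galois piece through the semidirect-product decomposition, induces an $F$-linear inner automorphism of $B$ and hence must come from $\iota(B^\times)$ by Skolem-Noether together with the triviality of the centralizer of $\iota(B^\times/F^\times)$ in $G$. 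Writing $\tilde f$ in this decomposition yields the desired $\nu \in \iota(B^\times)$. The main obstacle throughout is the passage from geometry to algebra: extracting the $\Q$-algebra structure of $B$ from the isometry $\tilde f$ requires the density of $\iota(\Gamma)$ in $\iota(B)$, and the refined conclusion $\nu \in \iota(B^\times)$ requires Skolem-Noether to control how Galois permutations of factors and conjugation in $G$ interact inside the normalizer.
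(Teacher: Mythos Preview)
Your reverse direction and the de Rham decomposition $\tilde f = \nu\sigma$ are fine, and match the paper.  The forward direction, however, diverges from the paper at the crucial step of producing $\tau$, and there is a gap.  You assert that the $\R$-algebra automorphism $\Phi \colon x \mapsto \nu\sigma(x)\nu^{-1}$ of $M_2(\R)^s \times M_2(\C)^c$ ``sends $\iota(\Gamma)$ into $\iota'(\Gamma')$'' and ``maps the center $\iota(F)$ onto the center $\iota'(F')$''.  But the relation $\tilde f\,\iota(\Gamma)\,\tilde f^{-1}=\iota'(\Gamma')$ holds only in the \emph{projective} group $G$, not in the matrix algebra: after choosing lifts to $\GL_2$, you only get $\Phi(\iota(\tilde\gamma))\in \iota'(B')\cdot Z^\times$ where $Z=\R^s\times\C^c$ is the full centre, and these central scalars need not lie in $\iota'(F')$.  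Likewise, $\Phi$ acts on $Z$ simply as the permutation $\sigma$, and nothing you have written forces $\sigma(\iota(F))=\iota'(F')$ as $\Q$-subfields of $Z$.  Borel density tells you that lifts of $\Gamma$ generate $B$ over $F$, but it does not let you pass from a projective conjugacy to a $\Q$-algebra map.  What is actually needed is the invariant trace field and invariant quaternion algebra machinery (Maclachlan--Reid, Theorems 8.4.1 and 8.4.6): the $\Q$-algebra generated inside $M_2$ by the squares of a lattice is a conjugacy invariant, and for an arithmetic $\Gamma$ it recovers $\iota(B)$.  For $s+c=1$ this repairs your argument (the scalar ambiguity reduces to $\pm 1\in\iota'(F')$), but for $s+c>1$ even this requires care, since the sign tuples in $\{\pm 1\}^{s+c}$ need not lie in the diagonal $\iota'(F')$.

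The paper takes a different route that sidesteps this difficulty entirely: it observes that isometric implies representation equivalent, then invokes Proposition~\ref{prop:isospectralmeanslevel} (whose proof uses length spectra and the commensurability theorems of Prasad--Rapinchuk and Chinburg--Hamilton--Long--Reid, together with the Maclachlan--Reid results just mentioned) to obtain $\tau\colon B\xrightarrow{\sim}B'$ abstractly.  Having $\tau$ in hand, Skolem--Noether aligns $\iota$ with $\iota'$, and a second application of Skolem--Noether (now over $F$, since the permutation $\sigma$ has already been absorbed) yields $\nu\in\iota(B^\times)$.  Your strategy of reading $\tau$ directly off the isometry is more geometric and would avoid the spectral detour, but it needs the invariant quaternion algebra input you omitted; the paper's route is less direct but reuses machinery already established.
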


\begin{proof}
First, we argue that we may suppose without loss of generality that $B=B'$ and $F=F'$ by applying a $\Q$-algebra isomorphism.  In one direction, such an isomorphism is supplied, and in the other direction when $X(\Gamma),X(\Gamma')$ are isometric, then they are isospectral and we may apply the first statement in Proposition \ref{prop:isospectralmeanslevel}.  Under this assumption, the two embeddings $\iota,\iota'$ are conjugate by the Skolem--Noether theorem, so we may assume that $\iota=\iota'$.  

Now suppose that $X(\Gamma),X(\Gamma^\prime)$ are isometric.  This isometry preserves the product decomposition of $\calH$ up to permutation of factors, so after applying such a $\sigma$ we may assume that it does so; then we have $\Gamma=\nu \Gamma^\prime \nu^{-1}$ for some $\nu \in \iota(B^\times)$ again by the Skolem--Noether theorem, and the result is proved. 
\end{proof}

\section{Isospectral but not isometric hyperbolic 2-orbifolds: examples}\label{section:orbifoldsearch}

In this section, we exhibit isospectral but not isometric hyperbolic $2$-orbifolds with volume $23\pi/6$ and signature $(0;2^5,3,4)$.  As explained in the introduction, these orbifolds have covering groups which are maximal congruence Fuchsian groups and do not arise from Sunada's method.  Our computations are performed in \textsc{Magma} \cite{Magma}.

\subsection*{Pair 1}

Let $F=\Q(\sqrt{34})$ be the real quadratic field with discriminant $136$ and ring of integers $\Z_F=\Z[\sqrt{34}]$.  The fundamental unit $u=6\sqrt{34}+35$ of $\Z_F$ is totally positive, with norm $N(u)=1$.  The class group of $F$ is $\Cl \Z_F \cong \Z/2\Z$ and the narrow class group of $F$ is $\Cl^+ \Z_F \cong \Z/4\Z$; the nontrivial class in $\Cl \Z_F$ is represented by the ideal $\frakb=(3, \sqrt{34}+2)$ with $N\frakb=3$ and $\frakb^2=(\sqrt{34}+5)$, and the group $\Cl^+ \Z_F$ is generated by $\frakb$, since $\sqrt{34}+5$ has norm $N(\sqrt{34}+5)=-9<0$ and hence $\frakb$ is not generated by a totally positive element.

Let $B$ be the quaternion algebra over $F$ which is ramified at the unique prime $\frakp=(2,\sqrt{34})=(\sqrt{34}+6)$ of norm $2$, one of the two real places, and no other places.  Each choice of split real place will yield a pair of isospectral orbifolds, but it will turn out that these two choices yield the same pair of orbifolds up to isometry.  So we take the split real place to correspond to the embedding with positive square root $\sqrt{34} \mapsto 5.8309\ldots$.  Then $B$ has discriminant $\frakD=\frakp$ and we take $B=\quat{-1,\sqrt{34}-5}{F}$ (note that $\sqrt{34}-5>0$ but $-\sqrt{34}-5<0$), so that $B$ is the $F$-algebra generated by $i,j$ satisfying
\[ i^2 = -1, \quad j^2 = \sqrt{34}-5, \quad ji=-ij \]
as in (\ref{quateq}).
We embed
\begin{equation} \label{Bm2R}
\begin{aligned}
B &\hookrightarrow \M_2(\R) \\
i,j &\mapsto \begin{pmatrix} 0 & 1 \\ -1 & 0 \end{pmatrix}, 
\begin{pmatrix} \sqrt{\sqrt{34}-5} & 0 \\ 0 & -\sqrt{\sqrt{34}-5} \end{pmatrix}.
\end{aligned}
\end{equation}

We compute \cite{Voight-maxorder} a maximal order $\calO$ of $B$ as
\[ \calO=\Z_F \oplus \Z_F (\sqrt{34}+6)\frac{1+i}{2} \oplus \frakb \frac{j}{3} \oplus \frakb \frac{(\sqrt{34} + 1) + 3i + (\sqrt{34} + 1)j + ij}{6}. \]
The order $\calO$ has discriminant $\frakd=\frakp$.  The class group with modulus equal to the ramified real place of $B$ is $\Cl^{(+)} \Z_F = \Cl \Z_F \cong \Z/2\Z$; therefore by Proposition \ref{classnotypeno}, the class set $\Cl \calO$ is of cardinality $2$, with nontrivial class represented by a right $\calO$-ideal $I$ of reduced norm $\frakb=\frakp$.  We see also from Proposition \ref{classnotypeno} that the type number of $\calO$ is equal to $t(\calO)=2$; the isomorphism classes of orders are represented by $\calO_1=\calO$ and the left order $\calO_L(I)=\calO_2$ of $I$, with
\[ \calO_2 = \Z_F \oplus \frakb (\sqrt{34}+4)\frac{1+i}{6} \oplus \frakb \frac{j}{3} \oplus \Z_F (\sqrt{34}+5) \frac{ (\sqrt{34} - 5) - 3i + (\sqrt{34} + 1)j + ij}{18}. \]

As in (\ref{align:arithmeticgroups}), let $\Gamma_i^1=(\calO_i)_1^{\times}/\Z_F^\times$ for $i=1,2$ and similarly $\Gamma_i^+$ and $\Gamma_i^*$.  From equation (\ref{volumeformula}), we have 
\[ \vol(X(\Gamma_i^1)) = \frac{8\pi d_F^{3/2}\zeta_F(2)}{\left(4\pi^2\right)^2}(2-1) = \frac{46\pi}{3}. \]
From Lemma \ref{lem:gamma1*}, we have $\Gamma_i^1 < \Gamma_i^+$, with index 2: indeed, since the unit $u$ is totally positive we have $\Gamma_i^+/\Gamma_i^1 \cong \Z_{F,+}^\times/\Z_F^{\times 2} \cong \Z/2\Z$.  We compute \cite{Voight-shim} that the groups $\Gamma_i^1$ have signature $(3; 2^2, 3^4)$, and the groups $\Gamma_i^+$ have signature $(1;2^2,3^2,4^2)$ and area $23\pi/3$.  

Now we consider the maximal groups $\Gamma_i^*$.  Referring to Proposition \ref{corgamma}, we have that $\frakp = \frakd=(\sqrt{34}+6)$ is of the principal narrow class and $(\Cl \Z_F)[2]_+$ is trivial (the nontrivial class lifts to an element of order 4), so $\Gamma_i^+ < \Gamma_i^*$ with index $2$, and $\Gamma_i^*/\Gamma_i^+$ is generated by an element $\alpha_i \in N(\calO_i)_+$ such that $N(\alpha_i)=\sqrt{34}+6$: we find
\begin{align*}
\alpha_1 &= \frac{\sqrt{34}+6}{2}(-1+i+j+ij) \\
\alpha_2 &= \frac{\sqrt{34}+5}{9}(-3i+ij).
\end{align*}
We compute that $\Gamma_i^*$ has signature $(0;2^5,3,4)$ and area $23\pi/6$.  By Theorem \ref{Borel}, the groups $\Gamma_i^*$ are maximal arithmetic Fuchsian groups.  We have a decomposition (\ref{decomp})
\[ X(\Gamma^*)= B_+^\times \backslash (\calH \times \Bhat^\times / \Gammahat^*) = X(\Gamma^*_1) \sqcup X(\Gamma^*_2). \]

\begin{figure}
\begin{equation} \label{pic1} \notag
\includegraphics[scale=0.77]{./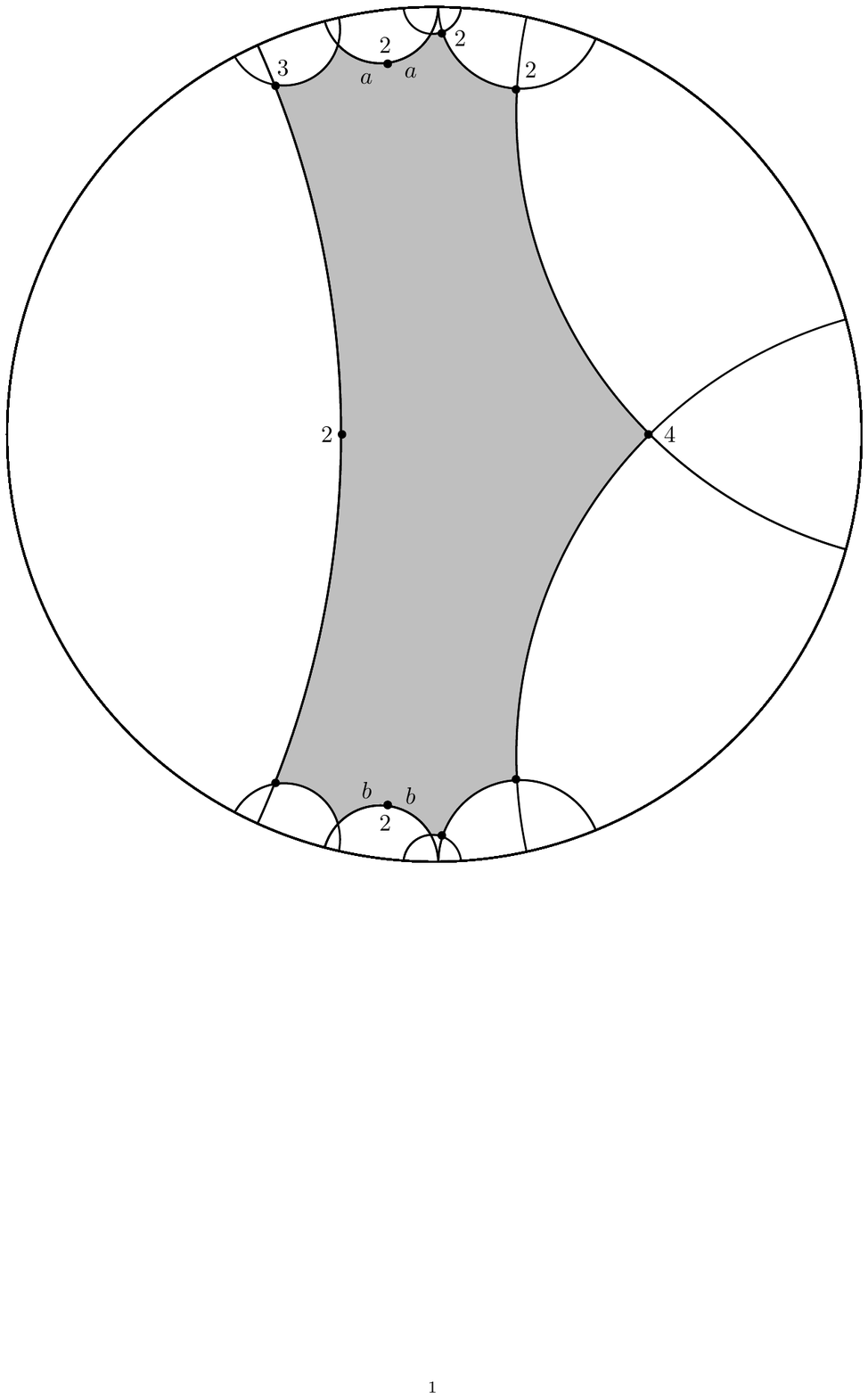} 
\end{equation}
\begin{equation*} 
\includegraphics[scale=0.77]{./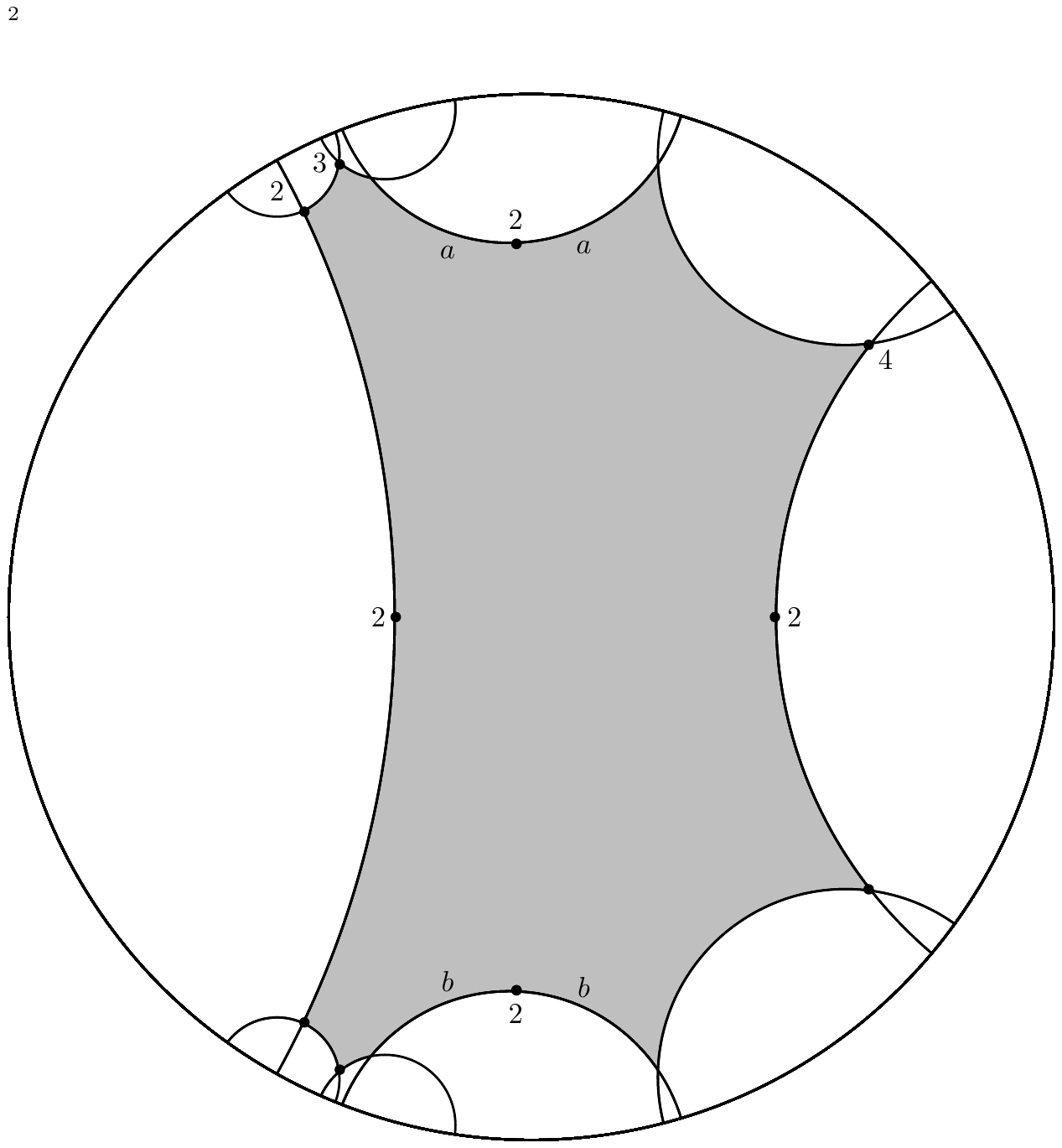} 
\end{equation*}
\centering
\textbf{Figure \ref{pic1}}: Fundamental domains for $(0;2^5,3,4)$-orbifolds $X(\Gamma_1^*)$,  $X(\Gamma_2^*)$ \\ over $F=\Q(\sqrt{34})$
\addtocounter{equation}{1}
\end{figure}

We compute  fundamental domains \cite{voight-funddom} for these groups in the unit disc, giving explicit presentations for the group and verifying the signature: they are given in Figure \ref{pic1}.  In these figures:
\begin{itemize}
\item a representative of each elliptic cycle is numbered with the size of its stabilizer group;
\item an unlabeled side is paired together with its image under complex conjugation;
\item labeled sides are paired together with labels $a,b,...$.
\end{itemize}
These domains are computed as Dirichlet domains with centers $p_1=\sqrt{-1}/3$ and $p_2=\sqrt{-1}/2$, respectively, corresponding to the origin in the unit disc.

By Theorem \ref{theorem:chinburgfriedman_selectivity}, the groups $\Gamma_i$ are not selective, since $B$ is ramified at $\frakp$.  We conclude using Theorem \ref{theorem:isospectraltheorem} that the hyperbolic $2$-orbifolds $X(\Gamma_1^*)$ and $X(\Gamma_2^*)$ are isospectral but not isometric.  

Finally, the same construction works with the other choice of split real place ($\sqrt{34} \mapsto -\sqrt{34}$).  However, if we apply the nontrivial Galois automorphism $\sigma:F \to F$ to the data above, then we find a pair of fundamental domains that are identical to the ones in Figure \ref{pic1}.  This is explained by the theory of canonical models of Shimura curves, due to Shimura \cite{Shimura} and Deligne \cite{Deligne}: the (disconnected) curve $X(\Gamma^*)$ is defined over $F$ (the two individual components are defined and conjugate over the Hilbert class field $H=F(\sqrt{u})$), but since the quaternion algebra $B$ has discriminant $\frakp$ and $\sigma(\frakp)=\frakp$, the curve $X(\Gamma^*)$ descends to $\Q$.  See work of Doi and Naganuma \cite{DoiNaganuma} for more discussion of this point.  Put another way, the commensurability class of an arithmetic Fuchsian group is determined by the isomorphism class of the quaternion algebra $B$ as an algebra over $\Q$, as in the proof of Proposition \ref{prop:isospectralmeanslevel} (see also Takeuchi \cite[Proposition 1]{Takeuchi}, and Maclachlan and Reid \cite[Theorem 8.4.7]{mac-reid-book}): in this case, the nontrivial Galois automorphism of $F$ identifies the two algebras, so the two groups are commensurable and consequently isomorphic.

\subsection*{Pairs 2 and 3}

Our second and third pairs come from the same Galois orbit, as follow.  

Let $F$ be the totally real quartic field of discriminant $21200=2^4 5^2 53^1$.  Then $F=\Q(w)$ with $f(w)=w^4-6w^2-2w+2=0$ and $\Z_F=\Z[w]$.  The Galois group of $F$ is $S_4$.  The class group of $F$ is trivial and the narrow class group of $F$ is $\Cl^+ \Z_F \cong \Z/2\Z$ with nontrivial class represented by $\frakb=(w-1)$ of norm $N\frakb=5$. The unit group of $\Z_F$ is generated by $\la -1, w+1, w^2+2w-1, 3w^2+w-1 \ra$ and $\Z_{F,+}^\times /\Z_F^{\times 2} \cong \Z/2\Z$, generated by $u=3w^2+w-1$.  

Let $B$ be the quaternion algebra over $F$ which is ramified at the unique prime $\frakp=(w)$ of norm $2$, and three of the four real places.  Specifically, we order the roots of $f$ as
\[ -2.1542\ldots, -0.8302\ldots, 0.4394\ldots, 2.5450\ldots, \]
and we label the real places of $F$ accordingly.  We take the split real place correspond to either the third or the fourth root, corresponding to the split places numbered $3$ or $4$.  We will show calculations for the third root; the other calculation follows similarly.  Then $B=\quat{-1,b}{F}$ where $b=w^3-w^2-5w+2$: we have $N(b)=-2$ and $b$ is positive in the split embedding and negative in the ramified embeddings.  We take an embedding $B \hookrightarrow \R$ analogous to (\ref{Bm2R}).

We compute a maximal order as
\begin{align*} 
\calO_1 = \calO = \Z_F &\oplus \Z_F \frac{(w^2+2w+2)(1+i)}{2} 
\oplus \Z_F \frac{(w^2+2w+2)( (w^2+2w+2) + w^2i + 2j)}{4} \\
&\oplus \Z_F \frac{(w^2+w+1)+(w+1)i+j+ij}{2}. 
\end{align*}
The class group with modulus equal to the product of the three ramified real places of $B$ is $\Cl^{(+)} \Z_F =\Cl^+\Z_F \cong \Z/2\Z$; this is true for split places $3$ and $4$ above, but not for the choices $1$ and $2$ (where the corresponding class group is trivial).  The second isomorphism class of orders is represented by
\begin{align*}
\calO_2 = \Z_F &\oplus \Z_F \frac{(w^2-4)(1+i)}{2} 
\oplus \Z_F \frac{(w^2+2w+2)( (w^2+2w+2) + (w^2-4)i + 2j)}{4} \\
&\oplus \Z_F \frac{(2w^3-w^2-9w+3)((w^2+w-1)+(w-13)i+5j+ij)}{10}.
\end{align*}

We compute again that $\vol(X(\Gamma_i))=46\pi/3$.  We find the containment $\Gamma_i^1 < \Gamma_i^+$ with index $2$, with signatures $(3;2^2,3^4)$ and $(1;2^2,3^2,4^2)$ as before.  Referring again to Proposition \ref{corgamma}, we have $\frakp=\frakd=(w)=(-w^3+11w^2+5w-4)$ in the narrow principal class and $(\Cl \Z_F)[2]_+$ is trivial, so we have $\Gamma_i^+ < \Gamma_i^*$ with index $2$ and the quotients are generated respectively by 
\begin{align*}
\alpha_1 &= w^2i + (-w^2 + 1)ij \\
\alpha_2 &= \frac{w^3 - w^2 - w}{2} + \frac{w^2 + w}{2}(i+j-ij)
\end{align*}
We compute that $\Gamma_i^*$ has signature $(0;2^5,3,4)$ and $\area(X(\Gamma_i^*))=23\pi/6$.  We compute a fundamental domain for these groups in Figure \ref{pic5} with centers $p_1=\sqrt{-1}/5$, $p_2=12\sqrt{-1}/11$  (and Figure \ref{pic6} for the other choice of split real place, with $p_1=2\sqrt{-1}/15$ and $p_2=\sqrt{-1}/2$).

\begin{figure}
\begin{equation} \label{pic5} \notag
\includegraphics[scale=0.77]{./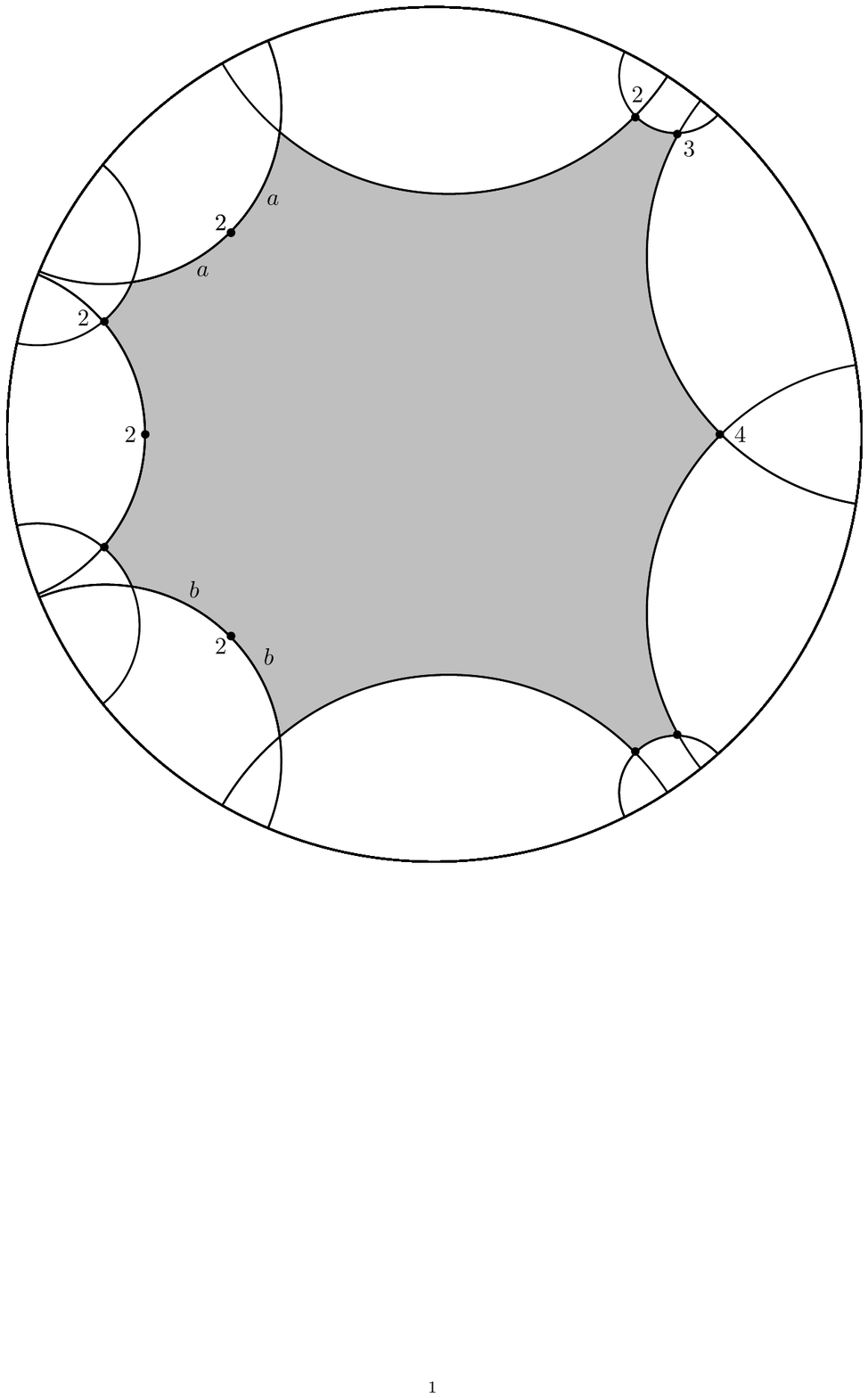} 
\end{equation}
\begin{equation*} 
\includegraphics[scale=0.77]{./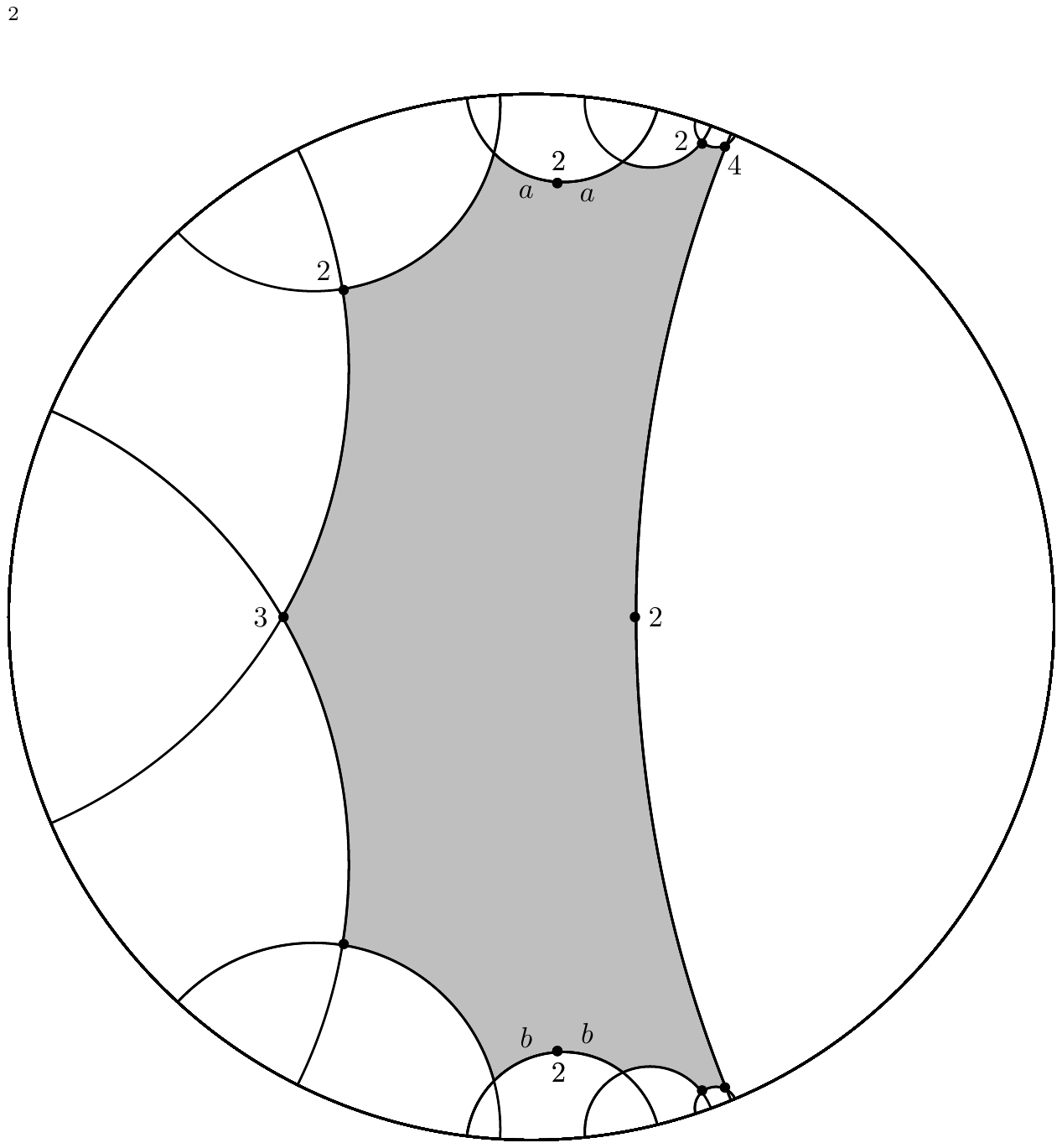} 
\end{equation*}
\centering
\textbf{Figure \ref{pic5}}: Fundamental domains for $(0;2^5,3,4)$-orbifolds $X(\Gamma_1^*)$,  $X(\Gamma_2^*)$ \\ over quartic $F$ (split place $3$)
\addtocounter{equation}{1}
\end{figure}

\begin{figure}
\begin{equation} \label{pic6} \notag
\includegraphics[scale=0.77]{./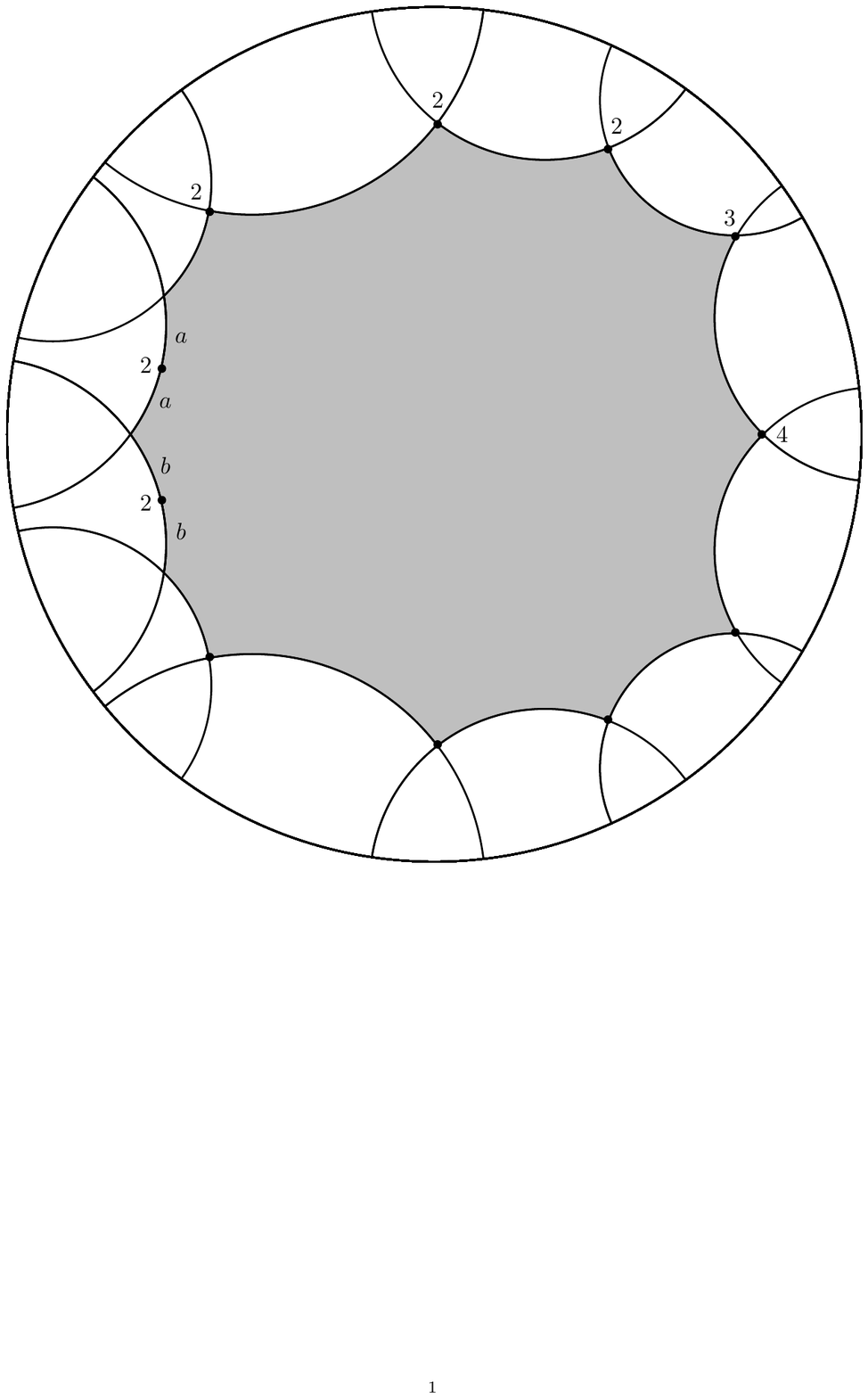} 
\end{equation}
\begin{equation*} 
\includegraphics[scale=0.77]{./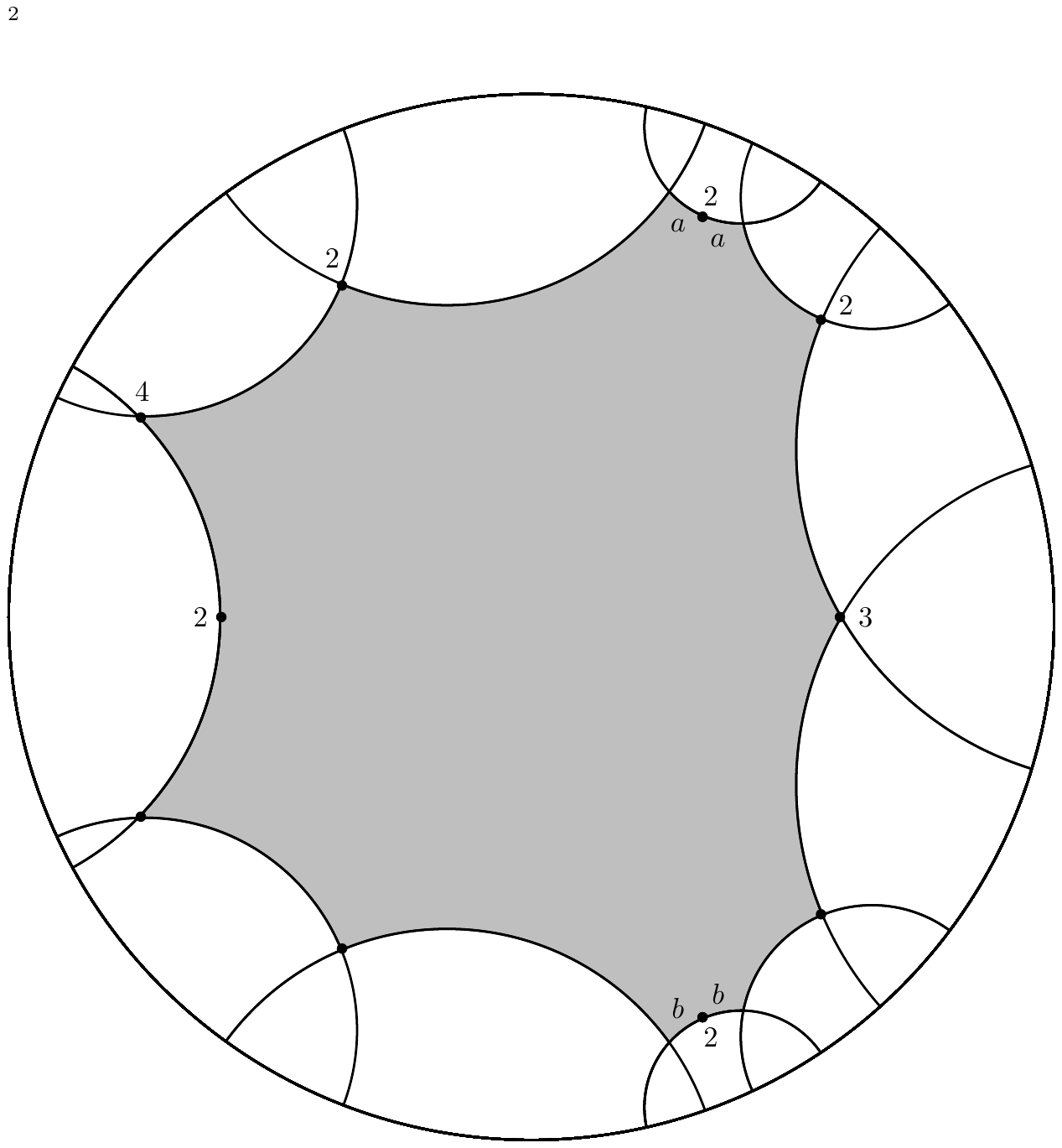} 
\end{equation*}
\centering
\textbf{Figure \ref{pic6}}: Fundamental domains for $(0;2^5,3,4)$-orbifolds $X(\Gamma_1^*)$,  $X(\Gamma_2^*)$ \\ over quartic $F$ (split place $4$)
\addtocounter{equation}{1}
\end{figure}

As with the first pair, by Theorem \ref{theorem:chinburgfriedman_selectivity}, the groups $\Gamma_i$ are not selective, since $B$ is ramified at $\frakp$.  We conclude using Theorem \ref{theorem:isospectraltheorem} that the hyperbolic $2$-orbifolds $X(\Gamma_1^*)$ and $X(\Gamma_2^*)$ are isospectral but not isometric.  

Here, there is no Galois descent of the curves: the quaternion algebras with split real places $3$ and $4$ over $\Q$ are not isomorphic.  Indeed, if they were, then a $\Q$-isomorphism would have to preserve the center $F$ and hence would be a Galois automorphism; but $F$ is not Galois, so this map is the identity, so the isomorphism is an $F$-isomorphism, and  this contradicts the fact that the algebras have distinct ramification sets at infinity.

\subsection*{A near-miss pair with slightly larger area}

We conclude this section by presenting an example with slightly larger area but a normalizer coming from the group $(\Cl \Z_F[2])_+$. 

\begin{exm}
Let $F=\mathbb Q(\sqrt{51})$ be the real quadratic field of discriminant $204$ and with ring of integers $\Z_F=\Z[\sqrt{51}]$.  The class number of $F$ is $\#\Cl \Z_F = 2$ with nontrivial ideal class represented by $\frakb=(3,\sqrt{51})$ with $\frakb^2=(3)$; we have $\Cl^+ \Z_F \cong \Z/2\Z \oplus \Z/2\Z$, generated by $\frakb$ and $\frakc=(\sqrt{51})$.  The fundamental unit $u=7\sqrt{51}+50$ is totally positive, with norm $N(u)=1$.  

Let $B$ be the quaternion algebra over $F$ ramified only at the prime $\frakp=(2,\sqrt{51}+1)=(\sqrt{51}-7)$ and the split real place corresponding to $\sqrt{51} \mapsto 7.141428\ldots$.  We take $B= \quat{-1,\sqrt{51}-7}{F}$.  We find a maximal order $\calO$ of $B$ as simply
\[ \calO_1=\calO = \Z_F \oplus \Z_F \frac{\sqrt{51}+i}{2} \oplus \Z_F j \oplus \Z_F \frac{\sqrt{51}j+ij}{2}. \]
The class group with modulus equal to the ramified real place of $B$ is equal to $\Cl^{(+)} \Z_F=\Cl \Z_F \cong \Z/2\Z$, so we again have type number $2$.  

We compute that $\Gamma_i^1$ has signature $(3;2^2,3^{12})$ and volume $26\pi$, that $\Gamma_i^+$ has index $2$ and signature $(1;2^2,3^6,4^2)$.  The groups $\Gamma_i^*$ have further index $2$: now, $\frakp$ does not belong to the narrow principal class, and so is not a square, but $\frakb$ lifts to an element of order $2$ in $\Cl^+ \Z_F$ so yields an element of $(\Cl \Z_F)[2]_+$: we find e.g.\ the normalizing element
\[ \alpha_1 = \frac{w-6}{2} + \frac{-2w+15}{2} i + (-3w+21)ij \]
of norm $3$.  We compute fundamental domains for these groups in Figure \ref{pic3}: they have signature $(0;2^3,3^2,4,6^2)$ and area $13\pi/2 > 23\pi/6$.  If the ideal $\frakp$ had belonged to the principal narrow class, then this would have given a pair with smaller area $13\pi/4 < 23\pi/6$.  

\begin{figure}
\begin{equation} \label{pic3} \notag
\includegraphics[scale=0.77]{./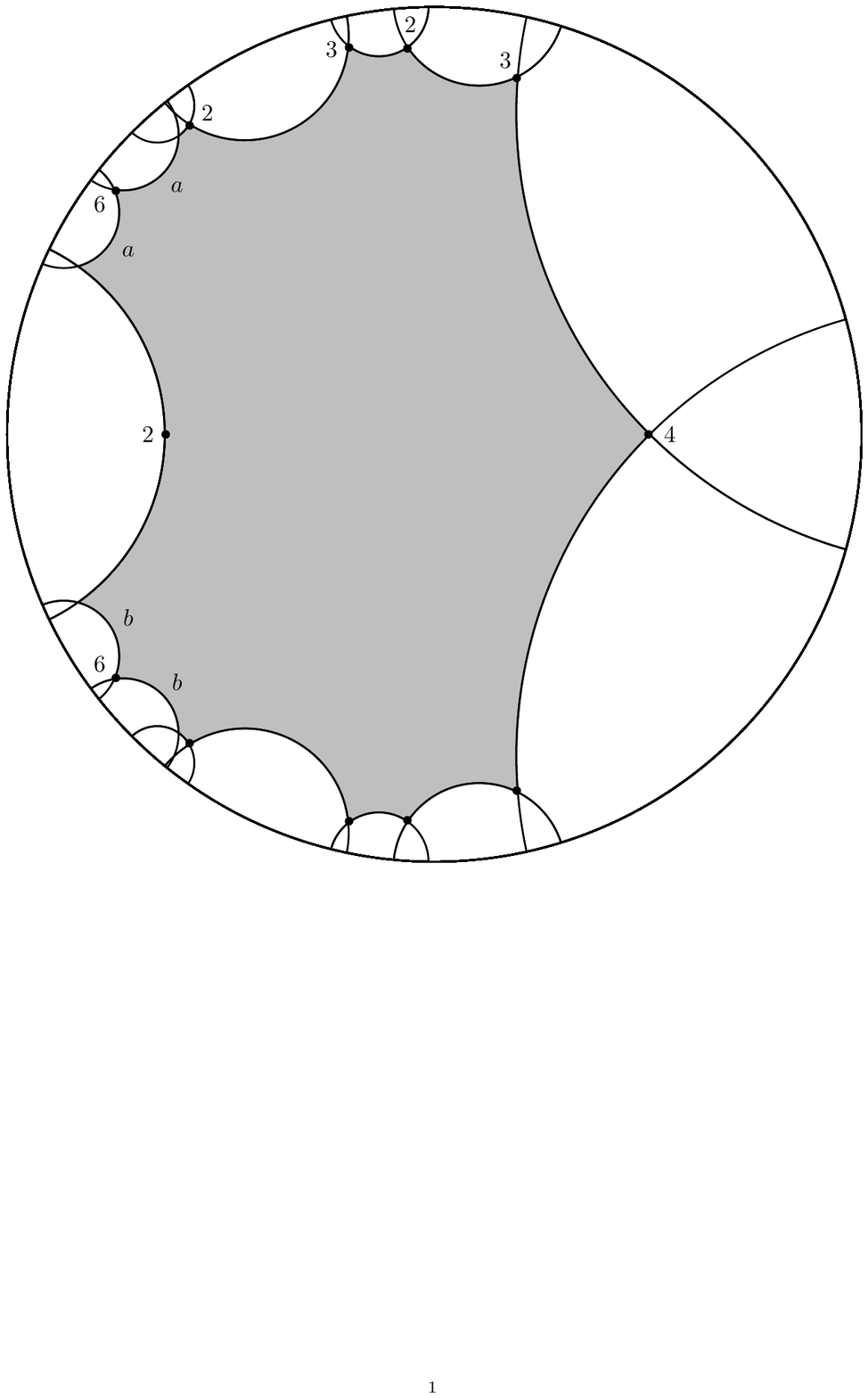} 
\end{equation}
\begin{equation*} 
\includegraphics[scale=0.77]{./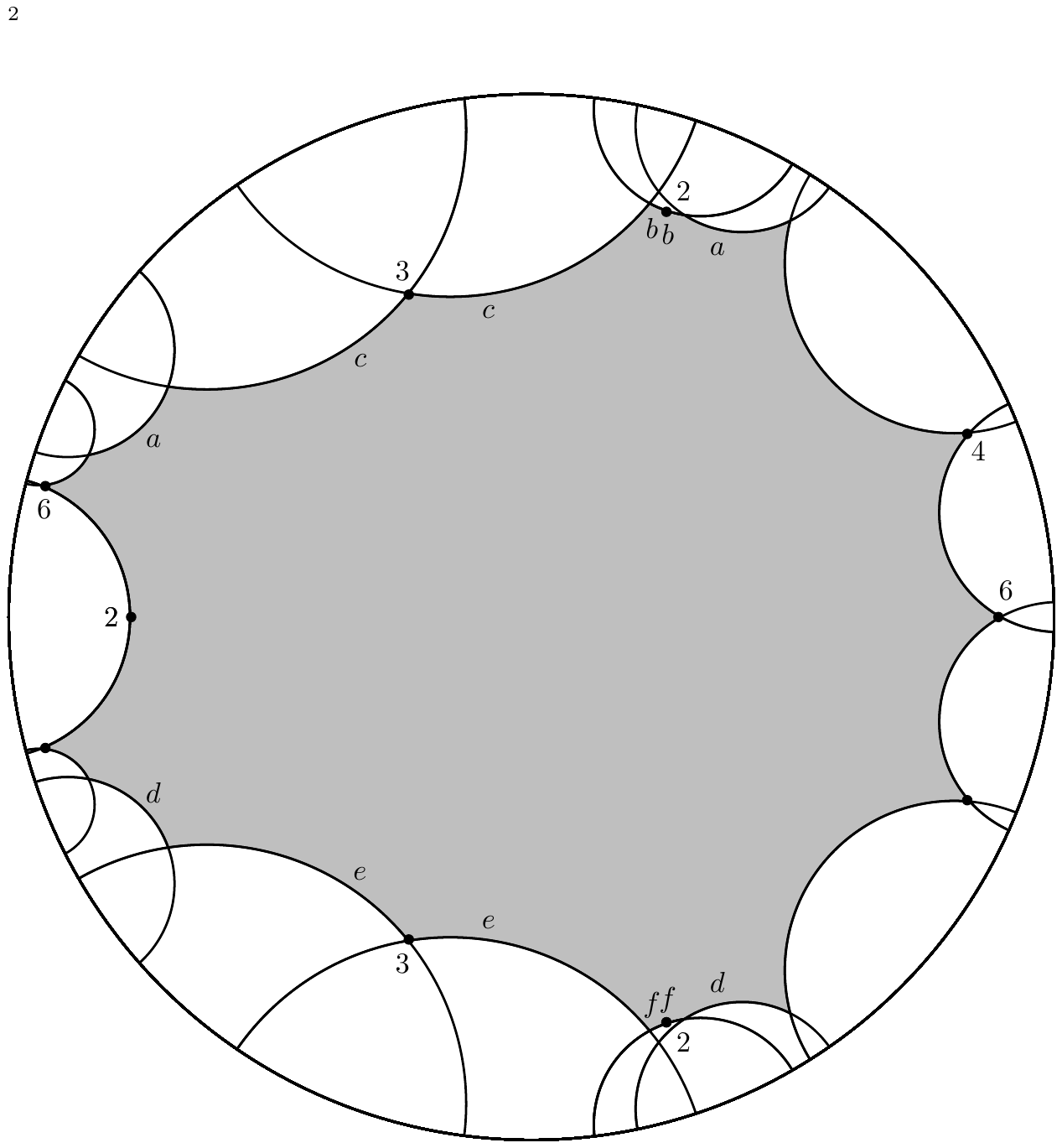} 
\end{equation*}
\centering
\textbf{Figure \ref{pic3}}: Fundamental domains for slightly larger $(0;2^3,3^2,4,6^2)$-orbifolds $X(\Gamma_1^*)$,  $X(\Gamma_2^*)$ \\ over $F=\Q(\sqrt{51})$
\addtocounter{equation}{1}
\end{figure}
\end{exm}

\subsection*{The smallest Sunada pairs?}

To conclude this section, we relax our requirement that the group be maximal and ask more broadly for the smallest arithmetic isospectral pair---here, we have some small examples that arises from the construction of Sunada \cite{Sunada}.  As mentioned in the introduction, Sunada's theorem reduces the construction of isospectral manifolds to a problem in finite group theory and has inspired an enormous amount of work within the field of inverse spectral geometry.  

It is not our intention to review these methods in any great detail, but in this section we connect our work to it: the reader interested in a more detailed description of Sunada's theorem and the work it has inspired is encouraged to consult the surveys of Gordon \cite{Gordon-IsospectralSurvey,Gordon-SunadaSurvey}.  In brief, we say that two subgroups $H_1,H_2 \leq G$ of a finite group $G$ are \defi{almost conjugate}  if each conjugacy class of $G$ meets $H_1$ and $H_2$ in the same number of elements.  We then have the following theorem.

\begin{thm}[Sunada] 
Let $G$ be a finite group of isometries acting on a Riemannian manifold $M$. Let $H_1,H_2 \leq G$ be almost conjugate in $G$. Then the orbit spaces $M/H_1$ and $M/H_2$ are isospectral orbifolds.
\end{thm}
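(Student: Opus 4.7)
The plan is to show that the heat operator traces $\mathrm{tr}(e^{-t\Delta_{M/H_1}})$ and $\mathrm{tr}(e^{-t\Delta_{M/H_2}})$ agree for every $t>0$, which forces the two Laplace spectra to coincide (since the heat trace is the Mellin pair of the spectral zeta function, or more directly, since $\sum_i e^{-\lambda_i t}$ determines the multiset $\{\lambda_i\}$). The heart of the argument is that the heat trace of a quotient $M/H$ by a finite isometry group can be expressed purely in terms of data on $M$ weighted by conjugacy class statistics of $H$ inside $G$.

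Concretely, I would identify $L^2(M/H)$ with the $H$-invariants $L^2(M)^H$ and write the orthogonal projector as $P_H = \frac{1}{|H|}\sum_{h\in H}h^{*}$. Since $\Delta_M$ commutes with the $G$-action, the operator $e^{-t\Delta_{M/H}}$ corresponds under this identification to $P_H\, e^{-t\Delta_M}$ restricted to the invariant subspace, so
\[
\mathrm{tr}\bigl(e^{-t\Delta_{M/H}}\bigr) \;=\; \mathrm{tr}\bigl(P_H\, e^{-t\Delta_M}\bigr) \;=\; \frac{1}{|H|}\sum_{h\in H}\int_M K_M(x,hx,t)\,dx,
\]
where $K_M$ is the heat kernel on $M$. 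Set $\phi_h(t):=\int_M K_M(x,hx,t)\,dx$; the key observation is that for any $g\in G$, the substitution $x\mapsto g^{-1}x$ (legal because $g$ is an isometry and $K_M$ is a Riemannian invariant) shows $\phi_{ghg^{-1}}(t)=\phi_h(t)$. Hence $\phi_h$ is a class function on $G$.

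Grouping the sum over $H$ by $G$-conjugacy classes $C$ gives
\[
\mathrm{tr}\bigl(e^{-t\Delta_{M/H}}\bigr) \;=\; \frac{1}{|H|}\sum_{C}\,|C\cap H|\,\phi_C(t).
\]
The almost-conjugacy hypothesis says $|C\cap H_1|=|C\cap H_2|$ for every $G$-conjugacy class $C$; summing over $C$ also gives $|H_1|=|H_2|$. Substituting for $H=H_1$ and $H=H_2$ yields identical heat traces, hence identical spectra.

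The main technical obstacle is that $M/H_i$ is only an orbifold when $H_i$ has fixed points, so one must justify the identification of $L^2(M/H)$ with $L^2(M)^H$, the orbifold interpretation of $\Delta_{M/H}$, and the equality of heat traces in that generalised setting; this is now standard (and in fact the construction applies verbatim on $p$-forms, yielding strong isospectrality), but it is the only place where care is required. All remaining steps are formal consequences of isometric invariance of the heat kernel and the defining property of almost-conjugate subgroups.
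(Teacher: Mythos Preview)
The paper does not actually prove this theorem: it is stated as Sunada's result with a citation \cite{Sunada}, and the surrounding discussion merely applies it to a specific example. So there is no ``paper's own proof'' to compare against.

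That said, your argument is correct and is essentially the standard heat-trace proof of Sunada's theorem. The identification $L^2(M/H)\cong L^2(M)^H$, the expression of the invariant heat trace as $\frac{1}{|H|}\sum_{h\in H}\int_M K_M(x,hx,t)\,dx$, the observation that $\phi_h$ is a class function on $G$ via the change of variables $x\mapsto g^{-1}x$, and the conclusion from $|C\cap H_1|=|C\cap H_2|$ are all correct. Your acknowledgement of the orbifold subtlety when $H_i$ has fixed points is appropriate and is indeed the only place requiring care; the identification of the orbifold Laplacian with the restriction of $\Delta_M$ to $H$-invariant functions is the standard way to handle it.
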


Most isospectral, nonisometric orbifolds have been found using Sunada's method---but not all, and as noted above, the orbifolds (and manifolds) exhibited by Vign\'eras and featured in this article cannot arise from this construction \cite[Proposition 3]{Chen}.

A small isometric-nonisomorphic pair of $2$-orbifolds was found using Sunada's method by Doyle and Rossetti \cite[Section 2]{DoyleRossetti}: they claim ``this is likely the simplest pair of isospectral hyperbolic polygons, both in having the smallest number of vertices and having the smallest area''; their construction is a variation on the famous example of Gordon, Webb, and Wolpert \cite{GordonWebbWolpert}.  

The construction runs as follows.  They consider the triangle group 
\[ \Delta=\Delta(3,4,6)=\langle \delta_3,\delta_4,\delta_6 \mid \delta_3^3=\delta_4^4=\delta_6^6=\delta_3\delta_4\delta_6=1 \rangle; \]
this group is our group $\Gamma^+$ associated to a maximal order $\calO$ in a quaternion algebra $B$ over $F=\Q(\sqrt{6})$ of discriminant $\frakp=(2-\sqrt{6})$ (with $\N\frakp=2$) and ramified at one of the two real places.  There is a normal subgroup $\Pi \trianglelefteq \Delta$ with $\Delta/\Pi \cong \PSL_2(\F_7)=G$.  There are two almost conjugate but non-conjugate index $7$ groups in $G$, and their preimages yield two index $7$ subgroups $\Gamma,\Gamma' \leq \Delta$ that are almost conjugate but are not conjugate; by Sunada's theorem, they are isospectral, and Doyle and Rosetti further prove that they are not isometric.  The corresponding $2$-orbifolds have area $7\pi/2$ and signature $(0;2^3,3,4,6)$.  We have
\[ 7/4 = 1.75 < 1.91666\ldots = 23/12 \]
so the examples in our Theorem A are not the smallest example among all arithmetic groups---but they narrowly miss this!

\begin{lem}
The groups $\Gamma,\Gamma' \leq \Delta$ are not congruence.
\end{lem}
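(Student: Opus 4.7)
The plan is to reduce to a statement about composition factors of congruence quotients of $\Delta$.  Since $\PSL_2(\F_7)$ is simple and $H_1 \leq \PSL_2(\F_7)$ is a proper (index $7$) subgroup, the normal core of $\Gamma$ in $\Delta$---that is, the intersection of all $\Delta$-conjugates of $\Gamma$---equals $\Pi$.  Any finite intersection of congruence subgroups is again congruence, so if $\Gamma$ were congruence then so would $\Pi$ be; it therefore suffices to show that $\Pi$ is not congruence, i.e., that the surjection $\Delta \twoheadrightarrow \PSL_2(\F_7)$ does not factor through any quotient $\Delta/(\Delta \cap \widehat{\Gamma(\frakN)})$.

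The core of the argument is to classify the non-abelian simple composition factors of the finite groups $\Delta/(\Delta \cap \widehat{\Gamma(\frakN)})$ as $\frakN$ ranges over ideals of $\Z_F$.  By strong approximation applied to the norm-one group $\calO_1^\times$, there is a natural isomorphism
\[
\calO_1^\times/\Gamma(\frakN) \xrightarrow{\sim} \widehat{\calO}_1^\times/\widehat{\Gamma(\frakN)} \cong \prod_{\frakp \mid \frakN} \calO_\frakp^{1\times}/\bigl(1 + \frakp^{e_\frakp}\calO_\frakp\bigr),
\]
where $\frakN = \prod \frakp^{e_\frakp}$.  At a prime $\frakp \nmid \frakD$, the local factor is an abelian-by-$\SL_2(\Z_F/\frakp^{e_\frakp})$-by-abelian extension, whose only non-abelian simple composition factor is $\PSL_2(\F_\frakp)$.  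At a prime $\frakp \mid \frakD$, the local order is the maximal order in a quaternion division algebra, whose unit group is pro-solvable with only abelian composition factors.  Passage from $\Gamma^1 = \calO_1^\times$ to $\Gamma^+ = \calO_+^\times/\Z_F^\times$ introduces only abelian factors, by Lemma \ref{lem:gamma1*} and quotienting by center.  Hence every non-abelian simple composition factor of $\Delta/(\Delta \cap \widehat{\Gamma(\frakN)})$ has the form $\PSL_2(\F_\frakq)$ for some prime $\frakq$ of $F$ coprime to $\frakD$.

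To conclude, I would check that $\PSL_2(\F_7)$ cannot arise this way for $F = \Q(\sqrt{6})$.  A prime $\frakq$ of $F$ has residue field $\F_7$ precisely when $7$ splits in $F$, equivalently when $6$ is a square modulo $7$.  But $6 \equiv -1 \pmod{7}$ and $-1$ is a non-residue modulo $7$ since $7 \equiv 3 \pmod 4$, so $7$ is inert in $F$ with residue field $\F_{49}$.  Therefore $\PSL_2(\F_7)$ is not a composition factor of any congruence quotient of $\Delta$, contradicting $\Delta/\Pi \cong \PSL_2(\F_7)$.  This rules out $\Pi$, and hence $\Gamma$ (and $\Gamma'$), being congruence.

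The main technical obstacle is making the composition-factor analysis rigorous across the nested groups $\Gamma(\frakN) \leq \calO_1^\times \leq \calO_+^\times \leq N(\calO)_+$, and tracking which abelian pieces are introduced at each passage (notably the quotient by $\Z_F^\times$ and the factor $\Z_{F,+}^\times/\Z_F^{\times 2}$).  None of these introduce new non-abelian simple composition factors, but writing out the filtrations cleanly---especially at the dyadic ramified prime $\frakp = (2-\sqrt{6})$, where one must describe $\calO_\frakp^\times$ via its residue extension $\F_4^\times$ and the pro-$2$ unit filtration---requires some care.
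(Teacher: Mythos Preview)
Your proposal is correct and follows essentially the same approach as the paper's proof: both reduce to showing that $\PSL_2(\F_7)$ cannot appear as a non-abelian simple composition factor of any congruence quotient $\Delta/\Gamma(\frakN)$, then use the arithmetic of $F=\Q(\sqrt{6})$ to see that $7$ is inert (since $6\equiv -1$ is a nonresidue mod $7$), so no prime of norm $7$ exists. The paper phrases the reduction slightly differently---writing $\frakN=\frakM\frakp^e$ and observing $\Delta/\Gamma(\frakN)\cong \PGL_2(\Z_F/\frakM)\times H$ with $H$ solvable, then asking for $\PSL_2(\F_7)$ as a quotient---but the substance is identical to your composition-factor analysis.
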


\begin{proof}
We see directly from the construction that the largest normal subgroup $\Pi \trianglelefteq \Delta$ contained in $\Gamma \cap \Gamma'$ corresponds to an (orbifold) Galois covering of $X(\Pi) \to X(\Delta)$ with Galois group $\PSL_2(\F_7)$, the simple group of order $168$.  Suppose that $\Gamma,\Gamma^\prime$ are congruence, containing the principal congruence subgroup $\Gamma(\frakN)$ for some ideal $\frakN \subseteq \Z_F=\Z[\sqrt{6}]$.  Since $\Gamma(\frakN)$ is normal in $\Delta$ it is normal in $\Gamma,\Gamma'$, so we have $\Pi \geq \Gamma(\frakN)$.  

If we write $\frakN=\frakM \frakp^e$ with $N\frakp=2$ as above, then we have
\[ \frac{\Delta}{\Gamma(\frakN)} \cong \PGL_2(\Z_F/\frakM) \times H \]
where $H$ is a solvable group, and so we have a surjective map
\[ \PGL_2(\Z_F/\frakM) \times H \cong \frac{\Delta}{\Gamma(\frakN)} \to \frac{\Delta}{\Pi} \cong \PSL_2(\F_7). \]
By the Chinese remainder theorem, we have $\GL_2(\Z_F/\frakM) \cong \prod_{\frakq^e \parallel \frakM} \GL_2(\Z_F/\frakq^e)$, and by the theory of finite simple Lie groups, one obtains $\PSL_2(\F_7)$ as a quotient of one of these factors if and only if it is a factor of $\GL_2(\Z_F/\frakq)$ for some $\frakq$ if and only if there is a prime $\frakq \subset \Z_F$ with $\N\!\frakq=7$.  Alas, there is not: $6 \equiv -1 \pmod{7}$ so $(7)$ is inert in $\Z_F$, and we have only a prime of norm $7^2=49$.
\end{proof}

We also find a small example of a torsion-free arithmetic isospectral nonisometric pair, due to Brooks and Tse \cite{BrooksTse}: again following Sunada, inside a group with signature $(1;7)$, they find torsion-free, index $7$ subgroups with signature $(4;-)$.  We verify that the signature $(1;7)$ is arithmetic (see e.g.\ the full treatment of $(1;e)$-groups by Sijsling \cite{Sijsling}).  

\begin{rmk}
We leave it as an open question if the example by Doyle and Rossetti is smallest among all isospectral-nonisometric pairs of arithmetic Fuchsian groups.  

Buser \cite[Chapter 12]{Buser} gives an infinite family of Riemann surfaces of genus $g \geq 4$ giving rise to nonisospectral $2$-manifolds.  We do not know if there are any isospectral-nonisometric pairs of $2$-manifolds in genus $3$ (arithmetic or not), and we have reason to believe that there are no such examples in genus $2$.
\end{rmk}

\section{Isospectral but not isometric hyperbolic 2-orbifolds: proofs}\label{section:orbifoldproofs}

We now prove that the three pairs with area $23\pi/6$ given in Section 3 are minimal (Theorem A). 

\begin{theorem} \label{theorem:bestorbifold}
The minimal area of an isospectral-nonisometric pair of $2$-orbifolds associated to maximal arithmetic Fuchsian groups is $23\pi/6$, and this bound is achieved by exactly three pairs, up to isomorphism.
\end{theorem}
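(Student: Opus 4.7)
The plan is to combine the volume formulas \eqref{volumeformula}--\eqref{volformnoteichler} with Odlyzko-type discriminant bounds for totally real fields, reducing the theorem to a finite enumeration, and then to check each surviving candidate explicitly with the tools of Section \ref{section:selectivitysection}.

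For a maximal arithmetic Fuchsian group $\Gamma^*=\Gamma_0^*(\frakN)$ over a totally real field $F$ (so $s=1$, $c=0$, $r=n-1$), a pair coming from the decomposition \eqref{decomp} of $X_{\textup{Sh}}(\Gamma^*)$ can be isospectral-nonisometric only if the narrow type number $t^+(\calO) \geq 2$. By Proposition \ref{classnotypeno}, this forces the $2$-rank of $\Cl^+\Z_F$ to exceed $\omega(\frakD\frakN)$, and in particular $F$ must have nontrivial narrow $2$-class group. First I would combine this constraint with the identity
\[ \vol(X^*) = \frac{2\,d_F^{3/2} \zeta_F(2)\,\Phi(\frakD)\Psi(\frakN)}{(4\pi)^{2n-3}\,[\Z_{F,+}^\times:\Z_F^{\times 2}]\cdot\#\{\fraka \parallel \frakD\frakN : [\fraka]\in(\Cl^+\Z_F)^2\}\cdot\#(\Cl\Z_F)[2]_+} \leq \frac{23\pi}{6}, \]
using $\zeta_F(2) > 1$ and Odlyzko's lower bounds on $d_F^{1/n}$ to bound $n=[F:\Q]$. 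This gives a short list of possible degrees, and for each degree an effective bound on the root discriminant of $F$; the totally real fields below this bound can be enumerated from standard tables.

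Next, for each candidate $F$ I would bound $\frakD$ and $\frakN$: the constraint that $B$ is split at exactly one real place fixes the parity of the ramification set, and the volume bound combined with $\Phi(\frakD)\Psi(\frakN) \geq (N\frakD-1) N\frakN$ limits $N(\frakD\frakN)$ finitely. For each triple $(F,\frakD,\frakN)$ surviving these bounds I would compute, using \textsc{Magma}, the invariants $\Cl^+\Z_F$, $\Z_{F,+}^\times/\Z_F^{\times 2}$, $(\Cl\Z_F)[2]_+$, and the quotient $\Cl^+_{\Gamma^*}(\Z_F)$ of Proposition \ref{classnotypeno}; I discard any triple with $t^+(\calO)=1$, and I discard any triple whose exact area exceeds $23\pi/6$. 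For the survivors, representation equivalence of the pair $X(\Gamma^*_1),X(\Gamma^*_2)$ follows from Theorem \ref{theorem:isospectraltheorem} provided no conjugacy class is selective; in all the small examples under consideration $B$ will be ramified at a finite prime, so hypothesis (e) of Theorem \ref{theorem:chinburgfriedman_selectivity} fails and no selectivity occurs. Non-isometry follows from Proposition \ref{prop:isometrytheorem}: a hypothetical isometry would induce a $B^\times$-conjugacy between the two Eichler orders, contradicting the fact that they represent distinct classes in $T^+(\calO)$.

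The main obstacle will be keeping the enumeration genuinely tractable. Naively dropping the denominator in the displayed volume formula gives bounds on $d_F$ that are orders of magnitude too weak, because the index $[\Gamma^*:\Gamma^1]$ grows with the $2$-rank of $\Cl^+\Z_F$ and with $\omega(\frakD\frakN)$; on the other hand, the requirement $t^+(\calO)\geq 2$ of Proposition \ref{classnotypeno} is precisely a lower bound on this $2$-rank relative to $\omega(\frakD\frakN)$. The crux is therefore to exploit this tension: any field contributing to the index must already contribute to the denominator and hence shrink the effective upper bound. Working this inequality out case-by-case (by degree, and then by the splitting behavior of $B$) is what will drive the bound on $d_F$ down to a computable range. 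The final output of the enumeration should be exactly the three triples $(F,\frakD,\frakN)$ realized in Section \ref{section:orbifoldsearch}: $F=\Q(\sqrt{34})$ with $\frakD=\frakp_2$, and the primitive quartic field $F$ of discriminant $21200$ with $\frakD=\frakp_2$ for each of the two admissible choices of split real place, in each case with $\frakN=(1)$.
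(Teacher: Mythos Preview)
Your overall strategy matches the paper's: bound the root discriminant via the volume formula and Odlyzko, enumerate fields, then orders, then check type number and selectivity. But there is a genuine gap at the point you yourself flag as ``the main obstacle.''

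The index $[\Gamma^*:\Gamma^1]$ sits in the denominator of the volume, and without an \emph{a priori} bound on it you cannot extract any bound on $\delta_F$ at all: the index can be as large as $2^{m+h_2+\omega(\frakd)}$, and while $m\leq n-1$ and the primes in $\frakd$ are largely absorbed by $\Phi(\frakD)\Psi(\frakN)$, nothing in your proposal bounds $h_2=\rk_2\Cl\Z_F$. Your suggested mechanism---that the type-number constraint $t^+(\calO)\geq 2$ forces the $2$-rank of $\Cl^+\Z_F$ to exceed $\omega(\frakd)$---goes in the wrong direction: it is a \emph{lower} bound on the class-group $2$-rank, which only makes the index larger and the upper bound on $\delta_F$ weaker. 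Passing to the Hilbert $2$-class field and applying Odlyzko there does eventually bite, but only once you already have a finite starting bound.

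The paper supplies that starting bound by an argument you do not mention: since $\Gamma^*/\Gamma^1$ is an elementary abelian $2$-group, it is a quotient of $(\Gamma^*)^{\textup{ab}}/2$, and the presentation \eqref{presentation} of a Fuchsian group of area $\leq 23\pi/6$ shows this has $2$-rank at most $6$. Hence $[\Gamma^*:\Gamma^1]\leq 2^6$ unconditionally, which gives $n\leq 12$ and an initial $\delta_F$ bound; only then do the refinements (Armitage--Fr\"ohlich, Hilbert class fields, Poitou's small-prime bounds) cut the list down to something computable.

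A second, smaller issue: your claim that ``in all the small examples under consideration $B$ will be ramified at a finite prime'' is not something you can assert in advance. The enumeration in the paper does produce candidates with $\frakD=(1)$ (for instance over the cubic field of discriminant $761$), and for these one must explicitly check for a selective quadratic extension using Theorem~\ref{theorem:chinburgfriedman_selectivity}; some of them are genuinely selective and must be discarded on those grounds rather than by a blanket argument.
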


The rest of this section will be devoted to the proof of Theorem \ref{theorem:bestorbifold}, consisting of three steps.

\begin{enumerate}
\item First, we show that there are only a finite number of totally real fields $F$ which can support a quaternion algebra $B$ and an Eichler order $\mathcal \calO\subset B$ with corresponding maximal Fuchsian group of coarea at most $23\pi/6$.  In particular, we obtain an explicit upper bound for the root discriminant $\delta_F=d_F^{1/n}$ of such a field $F$, and we enumerate all such fields.

\item Next, over each such field $F$, we bound the norm of the discriminant $\frakD$ and level $\frakN$, and then we enumerate all (finitely many) Eichler orders meeting these bounds (and in particular the volume bound). Recall that by Proposition \ref{prop:isospectralmeanslevel}, any isospectral-nonisometric pair of maximal arithmetic Fuchsian groups must arise from Eichler orders of a common level $\frakN$ inside of a common quaternion algebra of discriminant $\frakD$.

\item For each Eichler order $\calO$, we compute the type number $t(\calO)$ and keep $\calO$ if $t(\calO) > 1$; then we check for selectivity and keep $\calO$ if is not selective.  At the end of these computations, we find the list of three pairs of $2$-orbifolds announced in Theorem \ref{theorem:bestorbifold}.
\end{enumerate}

We now turn to each of these steps in turn.

\subsection*{Enumerating fields (1)}

To begin, we show that there are only a finite number of totally real fields $F$ which can support a quaternion algebra and an Eichler order with corresponding maximal Fuchsian group of coarea at most $23\pi/6$.  

We continue with the notation from Section 1.  In particular, let $F$ be a totally real field of degree $n=r$, let $B$ be a quaternion algebra over $F$ split at $s=1$ real place with discriminant $\frakD$, and let $\calO=\calO_0(\frakN) \subset B$ be an Eichler order of (squarefree) level $\frakN$ and discriminant $\frakd=\frakD\frakN$.  

The formula (\ref{volumeformula}) for the volume (area) in this case reads
\begin{equation} \label{X1vol}
\area(X^*) = \frac{2(4\pi)}{(4\pi^2)^n[\Gamma^*:\Gamma^1]} d_F^{3/2} \zeta_F(2) \Phi(\frakD)\Psi(\frakN).
\end{equation}
From (\ref{PhiPsi}) we have $\Phi(\frakD)\Psi(\frakN) \geq 1$, so 
\begin{equation} \label{equation:fullbound}
\area(X^*) \geq \frac{8\pi}{(4\pi^2)^n}\frac{1}{[\Gamma^*:\Gamma^1]} d_F^{3/2} \zeta_F(2).
\end{equation}

Suppose now that $\area(X^*) \leq 23\pi/6$.  (Our arguments work as well with a different bound on the area.)  Then the formula (\ref{riemannhurwitz}) 
for the co-area of a Fuchsian group in terms of its signature implies that $\Gamma^*$ is allowed to have one of only finitely many possible signatures and among these at most $\lfloor 2(23/12+2-2g) \rfloor =7-4g$ conjugacy classes of elliptic cycles of even order.  Therefore, from the presentation (\ref{presentation}), the maximal abelian quotient $(\Gamma^*)^{\textup{ab}}$ modulo squares has $2$-rank at most 
\[ (7-4g)+2g-1=6-2g \leq 6. \]  
But $\Gamma^*/\Gamma^1$ is an elementary abelian $2$-group of size $[\Gamma^*:\Gamma^1]$, so $[\Gamma^*:\Gamma^1] \leq 2^{6}$. 

\begin{rmk}
We do not expect that the bound $[\Gamma^*:\Gamma^1] \leq 2^{6}$ on the index to be sharp for groups of such small volume.  However, there are arithmetic Fuchsian groups with signature $(0;2^{6})$ and so we cannot rule it out theoretically.  This bound is enough for our purposes, but we believe it would still be profitable to consider what kind of bound on $[\Gamma^*:\Gamma^1]$ is implied by a bound on the area of $X(\Gamma^1)$ (for arithmetic groups).
\end{rmk}

From the trivial estimate $\zeta_F(2) \geq 1$, we find
\[ \frac{23\pi}{6} \geq \area(X^*) \geq \frac{8\pi}{(4\pi^2)^n 2^{6}} d_F^{3/2} \]
so
\begin{equation} \label{equation:odlyzko}
\begin{aligned}
\delta_F^{3/2} &\leq 4\pi^2 \left( \frac{92}{3}\right)^{1/n}  \\
\delta_F &\leq (11.596) \cdot (30.666)^{1/n} 
\end{aligned}
\end{equation}
where $\delta_F=d_F^{1/n}$ is the root discriminant of $F$.

But now by the (unconditional) Odlyzko bounds \cite{Odlyzko-bounds} (see also Martinet \cite{Martinet}), if $F$ has degree $n \geq 13$ then $\delta_F \geq 16.104$; but this contradicts (\ref{equation:odlyzko}), which insists that $\delta_F \leq 15.089$.  So we conclude that $n \leq 12$.  

We now refine this rough bound for these smaller degrees.  
Let $\Z_{F,+}^\times/\Z_F^{\times 2} \cong (\Z/2\Z)^m$ with $0 \leq m < n$, let  $\omega(\frakd)=\omega(\frakD) + \omega(\frakN)$ be the number of prime divisors of $\frakd$, and let $h_2=\rk_2 (\Cl \Z_F)[2]$.  Then by Lemma \ref{lem:gamma1*} and Proposition \ref{corgamma} we have 
\begin{equation} \label{Gamma1index}
[\Gamma^*:\Gamma^1] \leq 2^{m+\omega(\frakD)+\omega(\frakN)+h_2};
\end{equation}
we note that equality holds if and only if each of the prime divisors of $\frakD$ and $\frakN$ belong to a square class in $\Cl^+ \Z_F$ and further $(\Cl \Z_F)[2] = (\Cl \Z_F)[2]_+$.  
Substituting (\ref{Gamma1index}) back into (\ref{X1vol}), we have
\begin{equation} \label{equation:xstar2}
\area(X^*) \geq \frac{8\pi}{(4\pi^2)^n 2^{m+h_2}} d_F^{3/2} \zeta_F(2) \frac{\Phi(\frakD)}{2^{\omega(\frakD)}}\frac{\Psi(\frakN)}{2^{\omega(\frakN)}}. 
\end{equation}

We have $\Psi(\frakN)/2^{\omega(\frakN)} \geq 1$.  Next, we estimate $\Phi(\frakD)/2^{\omega(\frakD)}$ from below.  Let
\[ \omega_2(\frakD)=\#\{\frakp \mid \frakD: N\frakp=2\}. \]
By multiplicativity and the facts that $\Phi(\frakp)/2 = 1/2,1,1$ for $N\frakp=2,3,4$, respectively, and $\Phi(\frakp)/2 \geq 2$ for $N\frakp \geq 5$, we find that 
\begin{equation} \label{Phi2D}
\frac{\Phi(\frakD)}{2^{\omega(\frakD)}} \geq \frac{1}{2^{\omega_2(\frakD)}}.
\end{equation}

By the Euler product expansion
\[ \zeta_F(s)=\prod_{\frakp} \left(1-\frac{1}{N\frakp^{s}}\right)^{-1}, \]
convergent for $\repart s>1$, we have 
\[ \zeta_F(2) \geq \left(\frac{4}{3}\right)^{\omega_2(\frakD)}. \] 
Combining these with (\ref{equation:fullbound}) we get
\begin{equation}\label{equation:areabound}
\frac{23\pi}{6} \geq \area(X^*) \geq \frac{8\pi}{(4\pi^2)^n 2^{m+h_2}} d_F^{3/2} \left(\frac{2}{3}\right)^{\omega_2(\frakD)},
\end{equation}
hence
\begin{equation} \label{equation:betterbound}
\delta_F^{3/2} \leq 4\pi^2 \left(\frac{23}{48}\right)^{1/n} 2^{(k+h_2)/n} \left(\frac{3}{2}\right)^{\omega_2(\frakD)/n}.
\end{equation}

We now use (\ref{equation:betterbound}) together with two arguments to find tighter bounds.  The first of these arguments is the theorem of Armitage and Fr\"ohlich \cite{armitage-frolich}, giving the inequality
\[ h_2 \geq m - \lfloor n/2 \rfloor. \]
The second is a refinement of the Odlyzko bounds that utilizes the existence of primes of small norm, due to Poitou \cite{Poitou} and further developed by Brueggeman and Doud \cite{doud,doud-tables}.  

We show how these arguments can be used to show that $n\leq 9$.  

\begin{lem} \label{lem:nle9}
We have $n \leq 9$.
\end{lem}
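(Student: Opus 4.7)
The plan is to refine the crude bound $n \le 12$ by combining inequality (\ref{equation:betterbound}) with the Armitage-Fr\"ohlich theorem and the Poitou/Brueggeman-Doud refinements of Odlyzko's discriminant bounds to rule out $n \in \{10, 11, 12\}$.

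First, I would apply Armitage-Fr\"ohlich, which gives $h_2 \ge m - \lfloor n/2 \rfloor$, hence $m + h_2 \le 2h_2 + \lfloor n/2 \rfloor$; substituting into (\ref{equation:betterbound}) yields
\[
\delta_F^{3/2} \le 4\pi^2 \left(\frac{23}{48}\right)^{1/n} 2^{(2h_2 + \lfloor n/2 \rfloor)/n} \left(\frac{3}{2}\right)^{\omega_2(\frakD)/n},
\]
an upper bound depending only on $n$, $h_2$, and $\omega_2(\frakD)$. It therefore suffices to bound these last two quantities.

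Second, I would control $h_2$ via the Hilbert $2$-class field $H_2$ of $F$: since $H_2/F$ is unramified of degree $2^{h_2}$, the root discriminants coincide, $\delta_{H_2} = \delta_F$, while $[H_2:\Q] = n \cdot 2^{h_2}$. The unconditional Odlyzko lower bound applied to $\delta_{H_2}$ then forces $h_2$ small: indeed, for $n \ge 10$, the case $h_2 \ge 1$ places $\delta_F$ above the displayed ceiling, so we are reduced to $h_2 = 0$. To bound $\omega_2(\frakD)$, I would invoke the Poitou-Brueggeman-Doud refinement: the existence of $\omega_2(\frakD)$ prime ideals of norm $2$ in $\Z_F$ raises the Odlyzko-type lower bound on $\delta_F$ by an amount that eventually overwhelms the gain from the $(3/2)^{\omega_2(\frakD)/n}$ factor, pinning $\omega_2(\frakD)$ into a small explicit range.

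Third, with $h_2 = 0$ and $\omega_2(\frakD)$ bounded, Armitage-Fr\"ohlich further reduces the displayed inequality to
\[
\delta_F^{3/2} \le 4\pi^2 \left(\frac{23}{48}\right)^{1/n} 2^{\lfloor n/2 \rfloor/n} \left(\frac{3}{2}\right)^{\omega_2(\frakD)/n},
\]
and a finite tabulation over $n \in \{10,11,12\}$ and the permitted values of $\omega_2(\frakD)$ then compares this upper bound for $\delta_F$ against the corresponding Brueggeman-Doud lower bound tailored to a totally real field of the given degree containing $\omega_2(\frakD)$ primes of norm $2$. In every such case one expects a strict inequality in the wrong direction, yielding the required contradiction and hence $n \le 9$.

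The main obstacle will be closing the numerical gap in the tightest cases: for $n = 10$ with $\omega_2(\frakD)$ near the top of its permitted range, the upper and lower bounds on $\delta_F$ are close, and the sharpest available Brueggeman-Doud tables will be needed. Particular care must be taken to use only unconditional (i.e.\ GRH-free) bounds and to account correctly for the precise number of small-norm primes when invoking the refinement.
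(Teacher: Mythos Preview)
Your proposal is correct and matches the paper's approach closely: Hilbert class field plus Odlyzko to force $h_2=0$, Armitage--Fr\"ohlich to bound $m$, and Poitou/Brueggeman--Doud to handle $\omega_2(\frakD)$. Two minor points of divergence are worth noting. First, the paper eliminates $h_2\ge 1$ using the earlier crude bound (\ref{equation:odlyzko}), which is independent of $m$, $h_2$, and $\omega_2(\frakD)$; in your ordering the displayed ceiling itself grows with $h_2$ and $\omega_2(\frakD)$, so you would need either to bound $\omega_2(\frakD)$ first or to apply the Poitou refinement to $H_2$ as well---the paper's order sidesteps this interdependence. Second, for the borderline case $n=10$, $\omega_2(\frakD)=0$ (where the Poitou refinement gives no help), the paper appeals to Voight's enumeration showing that no totally real field of degree $10$ has $\delta_F\le 14$, rather than to an analytic discriminant bound.
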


\begin{proof}
Suppose that $n=10$; the proofs for $n=11,12$ are easier.  Our original bound (\ref{equation:odlyzko}) reads $\delta_F \leq 17.527$.  Suppose that $h_2 \geq 1$; then the Hilbert class field of $F$ has degree $\geq 2n=20$ and root discriminant $\delta_F \leq 17.527$, contradicting the Odlyzko bounds.  So $h_2=0$.  From Armitage-Fr\"ohlich, we have $m \leq 5$.  Consequently, from (\ref{equation:betterbound}), we obtain
\begin{equation} \label{equation:yup10}
\begin{aligned}
\delta_F^{3/2} &\leq (4\pi^2) (23/48)^{1/10} 2^{1/2} (3/2)^{\omega_2(\frakD)/10} \\
\delta_F &\leq (13.909) \cdot (1.028)^{\omega_2(\frakD)}
\end{aligned}
\end{equation}
If $\omega_2(\frakD)=0$ then this yields $\delta_F \leq 13.909$; however, Voight \cite{voight-fields} has shown that there is no totally real field of degree $10$ with root discriminant $\leq 14$, a contradiction.  If $\omega_2(\frakD)=1$, then by the Odlyzko-Poitou bounds, we find that $\delta_F \geq 15.093$ and this contradicts (\ref{equation:yup10}) which insists $\delta_F \leq 14.299$.  We similarly find contradictions for $\omega_2(\frakD)=2$ and then for $\omega_2(\frakD) \geq 3$.  
\end{proof}

We repeat this argument with the degrees $2 \leq n \leq 9$, extending the Odlyzko-Poitou bounds as needed \cite[Theorem 2.4]{doud}.  We illustrate the iterative derivation of these bounds with the case $n=5$.  

\begin{lem} \label{lem:neq5}
If $n=5$, then $\delta_F \leq 16.312$.
\end{lem}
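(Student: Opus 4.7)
My plan is to follow the same template as Lemma \ref{lem:nle9}, alternating between the volume-based upper bound on $\delta_F$ from (\ref{equation:betterbound}) and Odlyzko-Poitou lower bounds on root discriminants, tightening the admissible window until only $\delta_F \leq 16.312$ remains.

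First I would specialize (\ref{equation:odlyzko}) to $n = 5$, obtaining the crude estimate $\delta_F \leq 11.596 \cdot 30.666^{1/5} \approx 22.99$. This initial window is too large to conclude, but it already restricts the invariants of $F$ that can contribute. Next, I would bound the 2-rank data. Armitage-Fr\"ohlich gives $h_2 \geq m - \lfloor 5/2 \rfloor = m - 2$, so $m \leq h_2 + 2$. Moreover, the Hilbert class field $H$ of $F$ is totally real of degree $5 \cdot 2^{h_2}$ with root discriminant $\delta_H = \delta_F$; comparing the preliminary bound $\delta_F \leq 22.99$ against the unconditional Odlyzko lower bounds for totally real fields of degrees $10, 20, 40, \dots$ caps $h_2$ (at worst a small constant such as $3$), and hence caps $m + h_2$.

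Then, for each admissible pair $(m, h_2)$ and each $\omega_2(\frakD) \geq 0$, I would substitute into (\ref{equation:betterbound})
\begin{equation*}
\delta_F^{3/2} \leq 4\pi^2 \left(\tfrac{23}{48}\right)^{1/5} 2^{(m + h_2)/5} \left(\tfrac{3}{2}\right)^{\omega_2(\frakD)/5}
\end{equation*}
to produce an upper bound on $\delta_F$ as a function of $\omega_2(\frakD)$, and simultaneously apply the Poitou refinement of Odlyzko (in the Brueggeman-Doud form \cite{doud, doud-tables}) for degree $5$ fields possessing $\omega_2(\frakD)$ primes of norm 2 to produce a lower bound. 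When $\omega_2(\frakD)$ is large, the lower bound exceeds the upper bound and the case is eliminated; when $\omega_2(\frakD)$ is small, the upper bound itself already satisfies $\delta_F \leq 16.312$. Taking the maximum over all surviving cases yields the claimed bound.

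The hard part is not conceptual but purely bookkeeping: one must systematically enumerate the triples $(m, h_2, \omega_2(\frakD))$ compatible with the constraints from Armitage-Fr\"ohlich and the Hilbert class field estimate, and then, for each, look up (or interpolate) the appropriate Odlyzko-Poitou entry. I expect the worst surviving case to correspond to $h_2$ near its maximum (so that $2^{(m + h_2)/5}$ is as large as possible) with $\omega_2(\frakD)$ at the threshold where the Poitou correction just fails to produce a contradiction; verifying that this worst case indeed bottoms out at $16.312$, rather than something slightly larger, is the only numerical delicacy.
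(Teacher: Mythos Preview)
Your outline matches the paper's template, but there is one substantive refinement you have not identified, and without it the iteration stalls near $\delta_F \leq 17.9$ rather than reaching $16.312$. You apply the Poitou correction only to $F$ itself (degree $5$); the paper also applies it to the Hilbert $2$-class field $H_2$ of degree $5\cdot 2^{h_2}$, where the same small primes lift. More importantly, the paper exploits a dichotomy that you do not mention: equality in (\ref{Gamma1index}) forces every prime dividing $\frakD$ to lie in the square of a narrow class, hence to project trivially to $(\Cl\Z_F)[2]$, hence to \emph{split} in $H_2$. So either the index bound is tight, in which case $H_2$ contains $2\omega_2(\frakD)$ primes of norm $2$ and the Poitou bound on the degree-$10$ field $H_2$ forces $\omega_2(\frakD)\leq 1$ (giving $\delta_F\leq 16.058$); or the index bound is slack by at least a factor $2$, which feeds directly back into (\ref{equation:betterbound}) and yields $\delta_F\leq 16.312$.

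Concretely, the bottleneck is the case $h_2=1$, $m=3$, $\omega_2(\frakD)\in\{2,3\}$: your bound (\ref{equation:betterbound}) gives only $\delta_F\leq 16.95$ or $17.89$ respectively, and the Poitou bound for a degree-$5$ field with two or three primes of norm $2$ is not strong enough to contradict this. The splitting dichotomy is what closes this gap. The $h_2=0$ case, by contrast, goes through essentially as you describe, since then Armitage--Fr\"ohlich gives $m\leq 2$ and (\ref{equation:betterbound}) alone already pushes $\delta_F$ below $16.312$ once large $\omega_2(\frakD)$ is excluded by Poitou on $F$.
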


\begin{proof}
Let $H_2$ be the subfield of the Hilbert class field of $F$ corresponding to $(\Cl \Z_F)[2]$, a totally real number field with $[H:F]=2^{h_2}$.  We begin with the bound (\ref{equation:odlyzko}) which provides $\delta_F \leq 26.490$.  If $h_2 \geq 3$, then $H_2$ has degree $\geq 40$ and root discriminant $\delta_{H_2}=\delta_F$, contradicting the Odlyzko bounds; thus $h_2 \leq 2$.  We have $m \leq n-1=4$ by Dirichlet's unit theorem.  We have $\omega_2(\frakD) \leq n = 5$; but if $\omega_2(\frakD)=5$ then (\ref{equation:betterbound}) provides $\delta_F \leq 23.981$ and the Poitou bounds give $\delta_F \geq 24.049$, a contradiction.  So $\omega_2(\frakD) \leq 4$.  Thus (\ref{equation:betterbound}) gives $\delta_F \leq 22.719$.  

Suppose that $h_2=2$.  Then there are at least $8$ primes of norm at most $4$ in $H_2$, a totally real field of degree $20$; but the Poitou bounds give $\delta_{H_2} = \delta_F \geq 29.517$, a contradiction.  So either $h_2=1$ or $\omega_2(\frakD) \leq 3$.  If $h_2=2$ and $\omega_2(\frakD) \leq 3$, then $\delta_{H_2}=\delta_F \leq 21.523$, contradicting the Odlyzko bounds.  So $h_2 \leq 1$.  But then by Armitage-Fr\"ohlich, we have $m \leq 3$.  If $\omega_2(\frakD)=4$ then $\delta_F \leq 18.885$ but $H_2$ has degree $10$ and at least $4$ primes of norm at most $4$ and the Poitou bounds give $\delta_H \geq 19.275$, a contradiction.  

So to summarize, we have $h_2 \leq 1$ and $m \leq 3$ and $\omega_2(\frakD)\leq 3$, and consequently the bound (\ref{equation:betterbound}) reads $\delta_F \leq 17.891$.  

Suppose that $h_2=1$.  Recall that equality holds in the bound (\ref{Gamma1index}) only if each of the prime divisors of $\frakD$ belong to the square of a class in $\Cl^+ \Z_F$, and hence projects to the trivial class in $(\Cl \Z_F)[2]$.  So if the inequality is tight, then all of the primes of norm $2$ which divide $\frakD$ split in $H_2$; so $H_2$ is a totally real field of degree $10$ with at least $2\omega_2(\frakD)$ primes of norm $2$, and the Poitou bounds now yield a contradiction unless $\omega_2(\frakD) \leq 1$, whence $\delta_F \leq 16.058$.  If the inequality is not tight, then at least one prime of norm $2$ does not belong to a square class: this reduces the size of the index $[\Gamma^*:\Gamma^1]$ by (at least) a factor $2$, and it improves the bound (\ref{equation:betterbound}) to $\delta_F \leq 16.312$.  

If $h_2=0$, then $m \leq 2$ and also we obtain $\delta_F \leq 16.312$.  Taking the maximum of these bounds, the proof is complete.
\end{proof}

The derivations in Lemma \ref{lem:nle9} and Lemma {lem:neq5} in each case can get involved, so we also checked these derivations algorithmically (the output is available upon request).  In this manner we obtain upper bounds for the root discriminant $\delta_F$ which we refer to as the Odlyzko-Poitou-Armitage-Fr\"ohlich (OPAF) bounds in (\ref{odlyzkotableyeah}).  For comparison, we list also the corresponding Odlyzko bounds (and, although we will not use them, the Odlyzko bounds assuming the GRH \cite{CD}).  We enumerate

\begin{equation} \label{odlyzkotableyeah}
\begin{array}{c||cc|c|c}
n & \textup{Odlyzko} & \textup{Odlyzko (GRH)} & \textup{OPAF} & \Delta \\
\hline
\rule{0pt}{2.5ex} 
2 & > 2.223 & (> 2.227) & < 23.778 & 30 \\
3 & 3.610 & (3.633) & 20.480 & 25 \\
4 & 5.067 & (5.127) & 19.007 & 20 \\
5 & 6.523 & (6.644) & 16.312 & 17 \\
6 & 7.941 & (8.148) & 15.410 & 16 \\
7 & 9.301 & (9.617) & 14.236 & 15.5 \\
8 & 10.596 & (11.042) & 12.661 & 15 \\
9 & 11.823 & (12.418) & 12.167 & 14.5 \\

\end{array}
\end{equation}

Voight \cite{voight-table} has enumerated all totally real fields of degree at most $10$ and small root discriminant, given by the bound $\Delta$ in (\ref{odlyzkotableyeah}).  Therefore, we obtain a finite (but long) list of fields to check.

\subsection*{Enumerating orders (2)}

Next, we enumerate all possible Eichler orders satisfying the bounds in the previous section.

To that end, let $F$ be a totally real field enumerated in the previous section and continue the assumption that $\area(X^*) \leq 23\pi/6$.  Equation (\ref{equation:xstar2}) implies
\begin{equation} \label{eqn:DNbound}
\frac{\Phi(\frakD)}{2^{\omega(\frakD)}} \leq \frac{\Phi(\frakD)}{2^{\omega(\frakD)}}\frac{\Psi(\frakN)}{2^{\omega(\frakN)}} \leq \frac{23}{48}(4\pi^2)^n \frac{2^{m+h_2}}{d_F^{3/2} \zeta_F(2)}.
\end{equation}
so $N\frakD$ and hence $N\frakN$ are bounded above by a factor that depends only on $F$.  The set of ideals with bounded norm can be enumerated using standard techniques \cite{Cohen}.  The quaternion algebra $B$ is characterized up to isomorphism by the finite set (of even cardinality) of places at which it ramifies.  Since our quaternion algebra $B$ must be ramified at all but one real place, it suffices to enumerate the collection of all possible discriminants $\frakD$ for quaternion algebras $B$ over $F$ which satisfy (\ref{eqn:DNbound}).  For each such $\frakD$, we compute the possible squarefree levels $\frakN$ again using (\ref{eqn:DNbound}).

\subsection*{Computing the type number and checking for selectivity (3)}

Finally, we check each order to see if we obtain an isospectral, nonisometric pair.

So let $B$ be a quaternion algebra of discriminant $\frakD$ over $F$ and $\calO \subset B$ an Eichler order of level $\frakN$ enumerated in the previous section.  We first check for the existence of a nonisometric pair.  From Propositions \ref{prop:isospectralmeanslevel} and \ref{prop:isometrytheorem}, it is necessary and sufficient for the type number $t(\calO)=\#T(\calO)$ to satisfy $t(\calO)>1$, where $T(\calO)$ is the collection of isomorphism classes of Eichler orders of level $\frakN$ as defined in section 1.  Indeed, if $t(\calO)=1$ then any two Eichler orders of level $\frakN$ will be conjugate, hence their normalizers are conjugate as well, and hence the associated surfaces will be isometric.  

By Proposition \ref{classnotypeno}, there is a bijection 
\begin{equation} \label{eqn:typeclassgroup}
 T(\calO) \xrightarrow{\sim} \frac{\Cl^{(+)}(\Z_F)}{\{[\fraka] : \fraka \parallel \frakd\}\Cl^{(+)}(\Z_F)^2 }
\end{equation}
where $\Cl^{(+)}(\Z_F)$ is the class group associated to the modulus given by the product of the real ramified places in $B$.  The group on the right in (\ref{eqn:typeclassgroup}) can be computed using computational class field theory.  We may also verify that these calculations are correct using the algorithms of Kirschmer and Voight \cite{KirschmerVoight}: we compute a complete set of right ideal classes of $\calO$ and compute the set of isomorphism classes amongst their left orders.

Let $T(B)$ denote the type set for the class of maximal orders in $B$ and $t(B) =\#T(B)$.  Then by (\ref{eqn:typeclassgroup}), we have $T(\calO) \leq T(B)$ for any Eichler order $\calO \subset B$.  This remark allows us to quickly rule many orders in one stroke, if $t(B)=1$.

As it turns out, in our list of orders we always have $t(B) \leq 2$.  Let $\calO$ be a group on our list with $t(\calO)=2$, and let $\calO_1=\calO$ and $\calO_2$ be representatives of $T(\calO)$, with associated surfaces $X_1^*$ and $X_2^*$.

\begin{rmk}
If we had found an order where $t(\calO) \geq 4$, we still obtain examples if there is no selectivity obstruction.  But even more is true: if for example $t(\calO)=4$ and the selective field $L$ in Theorem \ref{theorem:chinburgfriedman_selectivity} is unique, then there will still be a pair amongst the four surfaces which will be isospectral.  If there are multiple selective fields and large type number, then this quickly gets complicated.
\end{rmk}

It remains to check whether the surfaces $X_1^*,X_2^*$ are in fact isospectral.  According to Theorem \ref{theorem:isospectraltheorem}, we verify that $\Gamma_1,\Gamma_2$ are not selective for any conjugacy class.  The possible selective conjugacy classes, and their quotient fields $L$, are given in Theorem \ref{theorem:chinburgfriedman_selectivity}.


We check for a possible selective field $L$ in two ways.  The simplest check is to compute the sign matrix associated to a basis of units for $\Z_F^\times/\Z_F^{\times 2}$ and conclude whenever possible that the field $L$ cannot exist (see Example \ref{exmselect} below).  A second way is to use computational class field theory to compute the ray class field $H^{(+)}$ associated to $\Cl^{(+)} \Z_F$ by Artin reciprocity and check for each quadratic subfield $L$ if $L$ embeds in $B$ (verified easily by Theorem \ref{theorem:abhn}).  If no field $L$ exists, then we conclude that the surfaces $X_1^*$ and $X_2^*$ are isospectral but not isometric.  

\begin{exm} \label{exmselect}
We consider a small example where we can rule out selectivity.  (It is the smallest example we found considering only groups of the form $\Gamma^+$, leaving out normalizers.)  Let $F=\mathbb Q(w)$ where $w^5-7w^3-2w^2+11w+5=0$.  Then $F$ is a totally real field of degree $r=n=[F:\Q]=5$ with discriminant $220669=149\cdot 1481$, Galois group $S_5$, and ring of integers $\Z_F=\Z[w]$.  The class number of $F$ is $\#\Cl \Z_F = 1$ and the narrow class number of $F$ is $\# \Cl^+ \Z_F = 2$; the nontrivial class is represented by the ideal $\frakb=(w^2 - 3)$ with $N\frakb=2$.

Let $B$ be the quaternion algebra over $F$ which is unramified at all finite places of $F$, ramified at four of the five real places, and split at the remaining real place with $t \mapsto 1.557422\ldots$; then $B$ has discriminant $\frakD=(1)$.  We take $B=\quat{-1,b}{F}$ where $b=-w^4 + w^3 + 5w^2 - 2w - 6$ satisfies
$b^5 + 8b^4 + 14b^3 - 10b^2 - 8b - 1=0$.

As usual, we find a maximal order $\calO \subset B$.  The class number with modulus equal to the product of the four ramified real places of $F$ is $\#\Cl^{(+)} \Z_F = 2$; therefore by Proposition \ref{classnotypeno}, the class set $\Cl \calO$ is of cardinality $2$, with nontrivial class represented by a right $\calO$-ideal $I$ of reduced norm $\frakb$.  We see also from Proposition \ref{classnotypeno} that the type number of $\calO$ is equal to $t(\calO)=2$; let $\calO_1=\calO$ and $\calO_2$ be representatives of these classes.

We compute that $\Gamma_i^1 > \Gamma_i^+=\Gamma_i^*$, the first with index $2$; the $2$-orbifolds $X(\Gamma_i)$ have area equal to $19\pi/3$ and signature $(0;2^9,3)$.  

We verify that the groups $\Gamma_i^*$ are not selective.  We refer to Theorem \ref{theorem:chinburgfriedman_selectivity}.  Suppose that $g^{B^\times}$ is a selective conjugacy class, corresponding to a selective quadratic $\Z_F$-order $R=\Z_F[g] \subset L$.  Then the conductor of $L$ (as an abelian extension) is equal to the product of the four ramified places in $F$.  We organize this signs of units as follows.  Let $v_1,\dots,v_r$ be the real places of $F$.  Then we define the \defi{sign} map 
\begin{align*}
\sgn: \Z_F^\times &\to \{\pm 1\}^r \cong \F_2^r \\
u &\mapsto (\sgn(v_j(u))_{j=1,\dots,r}
\end{align*}
$\sgn$ is the usual sign of a real number.  Then there are units $u_1,\dots,u_5 \in \Z_F^\times$ that form a basis for $\Z_F^\times/\Z_F^{\times 2}$ with 
\begin{equation} \label{eqn:signmatrix}
 (\sgn(v_j(u_i))_{i,j=1}^{5} = 
\begin{pmatrix} 
1 & 0 & 0 & 0 & 0 \\
0 & 1 & 0 & 1 & 0 \\
0 & 0 & 1 & 0 & 0 \\
0 & 0 & 0 & 0 & 1 \\
0 & 0 & 0 & 0 & 0
\end{pmatrix}
\end{equation}
The size of the class group of modulus given by a subset of real places is equal to the corank of the matrix with the corresponding columns deleted; this corank remains $1$ for $\{v_1\}$, so no such $L$ exists.  We verify this using computational class field theory: the ray class field with this modulus is given by $F(\sqrt{u})$ where $u=w^4 - w^3 - 6w^2 + 3w + 8$ satisfies $u^5 - 4u^4 - 8u^3 + 4u^2 + 4u - 1=0$; however, $F(\sqrt{u})$ has conductor given by the product of just the real places $v_2v_4$, so $L \neq F(\sqrt{u})$ and hence $L$ does not exist.  

Finally, we conclude using Theorem \ref{theorem:isospectraltheorem} that the hyperbolic $2$-orbifolds $X(\Gamma_1)$ and $X(\Gamma_2)$ are isospectral but not isometric.  We obtain a total of three examples by appropriate choice of the split real place: the argument in the previous paragraph works also for the embeddings $t \mapsto -0.480942\ldots, -1.915645\dots$.
\end{exm}

In this way, we can verify that the orders are not selective.  If instead a quadratic subfield $L$ of $H^{(+)}$ exists and embeds into $B$ satisfying the above conditions, we compute further to verify that in fact the orders are selective as follows.  We search for a unit $\gamma \in \Z_L$ such that:
\begin{enumerate}
\item[(i)] $\gamma$ has totally positive norm to $F$; 
\item[(ii)] $R=\Z_F[\gamma]$ satisfies criteria (1) and (2) in Theorem \ref{theorem:eichlerorderselectivity}, so that $R$ embeds into either $\calO_1$ or $\calO_2$ (or both); and
\item[(iii)] $R=\Z_F[\gamma]$ satisfies criterion (2) in Theorem \ref{theorem:eichlerorderselectivity}, so that $R$ embeds into exactly one of $\calO_1,\calO_2$.  
\end{enumerate}
Our search technique is naive: we simply compute a (minimal) set of generators for $\Z_L^\times$ and take small powers of these units.  In each case, we are successful in finding such a unit $\gamma$ without looking too hard, thereby verifying that the associated orbifolds are indeed not isospectral.




Given that selectivity has proven to be a delicate issue in past calculations, to illustrate this computational method we consider a particular quaternion algebra of type number $2$ and prove that maximal orders of distinct type in this algebra do not give rise to isospectral surfaces.

\begin{exm}
Let $F=\Q(w)$ where $w^3-w^2-6w-1=0$; then $d_F=761$ (prime) and $F$ has $\#\Cl \Z_F=1$ and $\#\Cl^+\Z_F=2$.  Let $B$ be the quaternion algebra over $F$ which is unramified at all finite primes and ramified at two of the three real places, with split place $w \mapsto 3.064434\dots$.  Let $\calO$ be a maximal order; then $t(\calO)=2$.  Let $\calO_1=\calO$ and $\calO_2$ be maximal orders of distinct type.  Since $\frakD=\frakN=(1)$, we have $\Gamma_i^*=\Gamma_i$ for $i=1,2$.  We show that the corresponding orbifolds $X_1$ and $X_2$ are not isospectral.

For the record, we note that $\area(X_i)=5\pi/3$ and the groups $\Gamma_i$ have signature $(0;2,2,2,3,3)$ for $i=1,2$.  We compute that the ray class field $H^{(+)}$ with modulus equal to the product of the ramified places is $H^{(+)}=F(\sqrt{w})$, so we take $L=H^{(+)}$.  The field $L$ has conductor equal to this modulus; it follows from Theorem \ref{theorem:abhn} that there exists an embedding $L \hookrightarrow B$.  By searching as above, we find a (hyperbolic) unit $\gamma \in \Z_L^\times$ that satisfies
\[ \gamma^2 - (177228w^2 + 365877w + 57836)\gamma + 1. \]
By Theorem \ref{theorem:eichlerorderselectivity}, the order $R=\Z_F[\gamma]$ embeds in one of $\calO_1,\calO_2$ but not both.  Therefore the length of the closed geodesic associated to $\gamma$ lies in the length spectrum of one of $X_1,X_2$ but not both, and the two surfaces are not isospectral.


\end{exm}

In this manner, we have found the maximal arithmetic Fuchsian groups of minimal area which are isospectral but not isometric: they are the ones of area $23\pi/6$ exhibited in section 3.  This concludes the proof of Theorem \ref{theorem:bestorbifold}.

\section{Isospectral but not isometric hyperbolic 2-manifolds}

The search performed in Section \ref{section:orbifoldsearch} is easily adapted in order to find examples of isospectral but not isometric hyperbolic $2$-manifolds having small volume; indeed, $X(\Gamma)$ is a manifold if and only if $\Gamma$ is torsion free, so we need only add this additional hypothesis to our search, and thereby proving Theorem B.

We maintain the notation in Section 1.  In particular, let $\calO$ be an Eichler order of level $\frakN$ and let $\Gamma^1=\Gamma_0^1(\frakN)$.  
Because the maximal Fuchsian groups $\Gamma^*$ are more likely to have torsion, we consider more generally unitive groups $\Gamma^1 \leq \Gamma \leq \Gamma^+$ introduced in \eqref{align:arithmeticgroups}.  Indeed, in the notation of Section 2, if $\gamma \in \Gamma^* \in B^\times/F^\times$ has $\nrd(\gamma)=a$ and the order $R=\Z_F[x]/(x^2-a)$ embeds in $\calO$, then any any group containing $\Gamma$ will visibly have torsion.  This need not happen in every example, but for the cases of small area considered here (such as those arising from algebras of trivial discriminant), in practice this happens very often.  So already it is a bit generous to consider groups larger than $\Gamma^1$ when looking for isospectral-nonisometric pair of $2$-manifold groups.  

If the group $\Gamma^1$ has torsion, then so does any group containing it, so we first check for torsion in the group $\Gamma^1$.  The torsion in $\Gamma^1$ corresponds in a natural way to embeddings of cyclotomic quadratic fields; see e.g.\ work of Voight \cite[\S 2]{Voight-shim}.  An embedding $R \hookrightarrow \calO$ of a quadratic $\Z_F$-order is \defi{optimal} if $RF \cap \calO = R$.  

\begin{lem} \label{lemcycquad}
There is a bijection between conjugacy classes of elements in $\Gamma^1$ of finite order $q \geq 2$ and $\calO_1^\times$-conjugacy classes of optimal embeddings $R \hookrightarrow \calO$ with $\#R_{\textup{tors}}^\times=2q$.  
\end{lem}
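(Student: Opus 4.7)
The plan is to construct mutually inverse maps via the standard correspondence between torsion elements and cyclotomic quadratic orders.

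For the forward map, given $\gamma \in \Gamma^1$ of order $q$, I would lift $\gamma$ to $\tilde\gamma \in \calO_1^\times$, adjusting by a sign so that $\tilde\gamma$ has order $2q$ in $\calO_1^\times$. (For $q$ odd, exactly one of $\pm\tilde\gamma$ has order $2q$; for $q$ even, if $\tilde\gamma^q = 1$ then $\tilde\gamma^{q/2} \in \{\pm 1\}$ forces $\ord_{\Gamma^1}(\gamma) \mid q/2$, contradicting the hypothesis, so $\tilde\gamma^q = -1$ automatically.) Because $\gamma \neq 1$, we have $\tilde\gamma \notin F$, so $L = F[\tilde\gamma]$ is a CM quadratic extension of $F$, and $R = L \cap \calO$ is a quadratic $\Z_F$-order containing $\mu_{2q} = \langle \tilde\gamma \rangle$. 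By construction the inclusion $R \hookrightarrow \calO$ is optimal, and I restrict attention to those $\gamma$ for which $R^\times_{\textup{tors}}$ equals exactly $\mu_{2q}$.

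For the reverse map, given an optimal embedding $\phi \colon R \hookrightarrow \calO$ with $\#R^\times_{\textup{tors}} = 2q$, I would pick a generator $\zeta$ of $R^\times_{\textup{tors}}$. Since $\nrd(\phi(\zeta)) = N_{L/F}(\zeta) = \zeta \overline\zeta = 1$ (with $L = RF$), we have $\phi(\zeta) \in \calO_1^\times$, and $\zeta^q = -1$ shows that its image in $\Gamma^1$ has order exactly $q$. Conjugating $\tilde\gamma$ by $\tilde\delta \in \calO_1^\times$ conjugates $L$, $R$, and the inclusion by $\tilde\delta$, so the forward map descends to conjugacy classes and lands in $\calO_1^\times$-conjugacy classes. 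The two maps are mutually inverse: forward-then-reverse returns $\gamma$ (since the distinguished generator of the resulting $R$ is $\tilde\gamma$), and reverse-then-forward recovers $(R,\phi)$ via $F[\phi(\zeta)] \cap \calO = \phi(R)$ by optimality.

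The main obstacle is controlling $R^\times_{\textup{tors}}$ precisely. A priori, $L$ could contain $\zeta_m$ for some $m > 2q$ (if $F$ already contains additional cyclotomic subfields), in which case $L \cap \calO$ could have torsion strictly larger than $\mu_{2q}$; the lemma handles this by stratifying optimal embeddings according to the exact torsion count, and the bijection is then maintained stratum by stratum. A secondary subtlety is the choice of generator $\zeta$ in the reverse map: distinct primitive $2q$-th roots produce distinct, generally non-conjugate elements of $\Gamma^1$, matching the fact that each cyclic subgroup of order $q$ in $\Gamma^1$ contains $\varphi(q)$ generators realising possibly distinct conjugacy classes. I would resolve this by labelling each embedding with its distinguished generator of $R^\times_{\textup{tors}}$, as is standard in treatments of optimal embedding counts.
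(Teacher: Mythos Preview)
The paper does not supply a proof of this lemma; it is stated with a citation to Voight's paper on Shimura curves of small genus, so there is no argument in the text to compare against directly.

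Your overall strategy is the standard one and is correct in outline, but your treatment of what you call the ``main obstacle'' has a real gap. You define the forward map by $\gamma \mapsto R = F[\tilde\gamma] \cap \calO$ and then note that $R^\times_{\textup{tors}}$ may strictly contain $\mu_{2q}$. Your proposed fix---``restrict attention to those $\gamma$'' landing in the correct stratum and then assert that ``the bijection is maintained stratum by stratum''---does not repair this. Restricting the domain changes the statement you are proving, and you give no reason the forward map should respect the stratification. Concretely, if $\tilde\gamma$ happens to be a proper power of some $\zeta \in \calO_1^\times$ of order $2q' > 2q$ lying in the same quadratic subfield, then $\zeta \in F[\tilde\gamma] \cap \calO = R$, so $R$ has torsion $2q'$ and the inclusion $\Z_F[\tilde\gamma] \hookrightarrow \calO$ is \emph{not} optimal. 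For such $\gamma$ there is simply no optimal embedding with torsion exactly $2q$ arising from this subfield, so your forward map has nowhere to send it.

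The clean way out is the one you almost reach in your final paragraph but do not connect back to the main obstacle. The overflow $\#R^\times_{\textup{tors}} > 2q$ occurs exactly when $\langle \gamma \rangle$ is \emph{not} a maximal finite cyclic subgroup of $\Gamma^1$; conversely, maximality of $\langle \gamma \rangle$ forces $\#R^\times_{\textup{tors}} = 2q$, since any larger root of unity in $R$ would produce a strictly larger finite cyclic subgroup of $\Gamma^1$ containing $\gamma$. Read this way, the lemma is a bijection between conjugacy classes of maximal finite cyclic subgroups of order $q$ and $\calO_1^\times$-classes of optimal embeddings with $\#R^\times_{\textup{tors}} = 2q$, which is the form in which the cited reference establishes it (and is what is actually used in the paper, where one only needs to detect whether $\Gamma^1$ has any torsion). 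If you want a statement literally about elements rather than subgroups, you must build the choice of generator into the data on the embedding side---your ``labelling with a distinguished generator''---and then the two subtleties you separated are really one and the same.
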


In particular, it follows from Lemma \ref{lemcycquad} that if no cyclotomic field $K=F(\zeta_{2q})$ with $[K:F]=2$ embeds in $B$, then $\Gamma^1$ is torsion free.  (This statement becomes an ``if and only if'' when $\calO$ is maximal.)

To begin, we exhibit a pair of compact hyperbolic $2$-manifolds of genus $6$ which are isospectral but not isometric.

\begin{exm} \label{exm:manifoldexample}
Let $F=\Q(t)$ where $t^4-5t^2-2t+1=0$.  Then $F$ is a totally real quartic field with discriminant $d_F=5744=2^4 359$, Galois group $S_4$, class number $\#\Cl \Z_F=1$ and narrow class number $\#\Cl^+ \Z_F=2$.  Let $B$ be the quaternion algebra over $F$ which is ramified at the prime ideal $\frakp_{13}$ of norm $13$ generated by $b=t^3-t^2-4t$ and three of the four real places, with split place $t \mapsto -0.751024\ldots$: then $B=\quat{a,b}{F}$ where $a = t^3 - t^2 - 3t - 1$ satisfies $a^4+8a^3+12a^2-1=0$.  

A maximal order $\calO \subset B$ is given by
\[ \calO = \Z_F \oplus \Z_F i \oplus \Z_F \frac{(t^3+1)+t^2i + j}{2} \oplus \frac{(t+1)+(t^3+1)i+ij}{2}\Z_F; \]
$\calO$ has type number $t(\calO)=2$, so there exists two isomorphism classes of maximal orders $\calO_1=\calO$ and $\calO_2$.  Thus $X_1^1,X_2^1$ are not isometric, by Proposition \ref{prop:isometrytheorem}.  Since $B$ is ramified at a finite place, the orders are not selective by Theorem \ref{theorem:chinburgfriedman_selectivity}, thus $X_1^1,X_2^1$ are isospectral but not isometric.

In fact, $X_1^1$ and $X_2^1$ are $2$-manifolds.  We refer to Lemma \ref{lemcycquad}.  If $F(\zeta_{2q})$ is a cyclotomic quadratic extension of $F$ then $\Q(\zeta_{2q})^+ \subseteq F$; but $F$ is primitive, so the only cyclotomic quadratic extensions of $F$ are $K=F(\sqrt{-1})$ and $K=F(\sqrt{-3})$.  But as $\mathfrak p_{13}$ splits completely in $F(\sqrt{-1})$ and $F(\sqrt{-3})$, neither field embeds into $B$.  Thus by Lemma \ref{lemcycquad}, the groups $\Gamma_i$ are torsion free.

Finally, by (\ref{volumeformula}) we have $\area(X_i)=20\pi$, so $g(X_i)=6$ for $i=1,2$.  

Note that in this case whereas $X_1^+$ and $X_2^+$ have smaller area than $X_1^1$ and $X_2^1$ (Lemma \ref{lem:gamma1*} implies that they have area $10\pi$ ) and are isospectral-nonisometric, they do not give rise to a smaller pair of isospectral-nonisometric $2$-manifolds as the covering groups $\Gamma_1^+$ and $\Gamma_2^+$ are not torsion-free. Indeed, we compute their signatures to be $(3; 2,2)$.

We obtain a second example by choosing the split real place $t \mapsto -1.9202\ldots$, and since $F$ is not Galois, as in the case of the $2$-orbifold pairs 2 and 3, these are pairwise nonisometric.
\end{exm}

\begin{figure}
\begin{equation} \label{quartic1} \notag
\includegraphics[scale=0.75]{./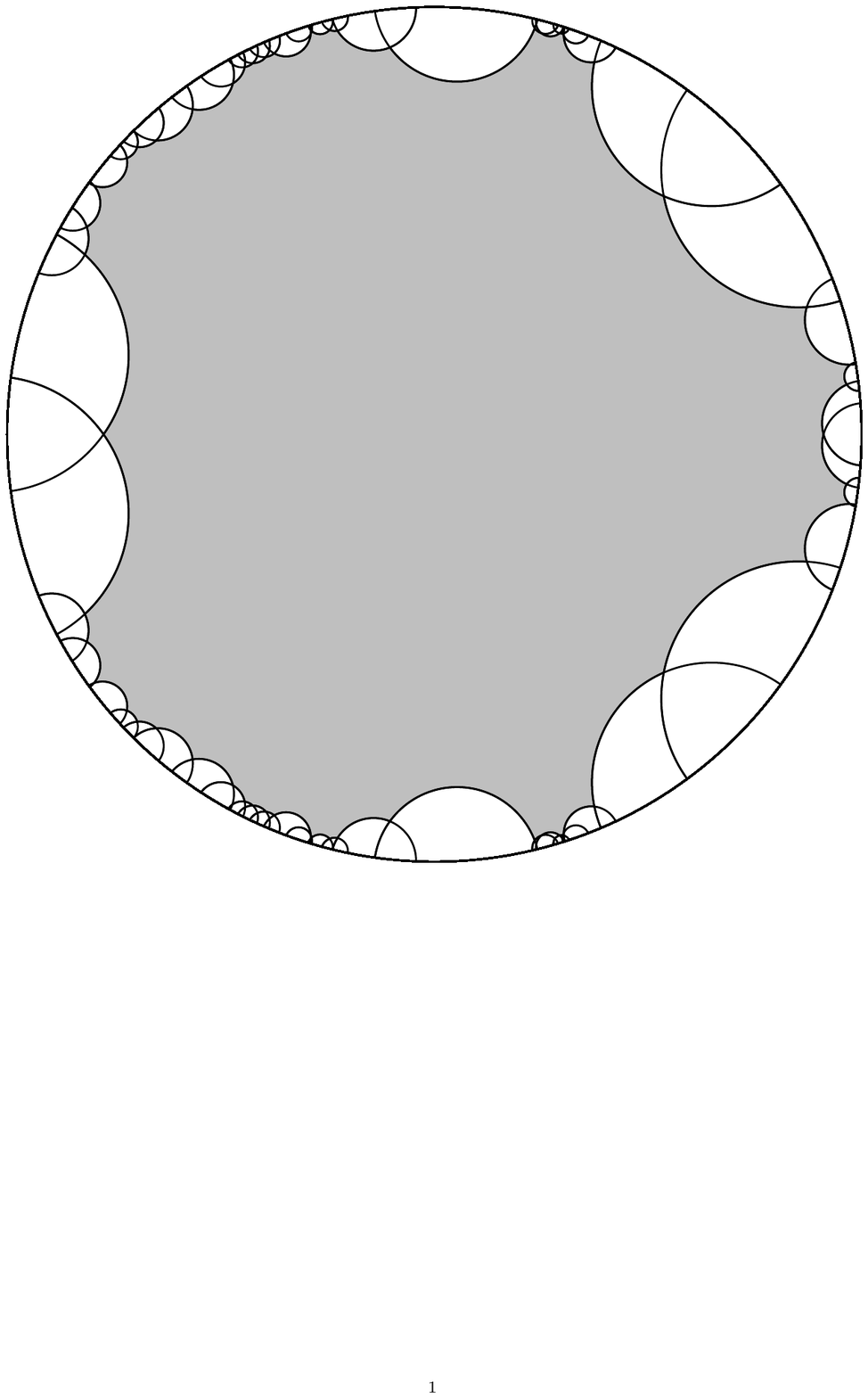} 
\end{equation}
\centering
\textbf{Figure \ref{quartic1}}: Fundamental domain for the genus $6$ manifold $X(\Gamma_1^1)$ over $F$ of discriminant $5744$
\addtocounter{equation}{1}

\begin{equation} \label{quartic2} \notag
\includegraphics[scale=0.75]{./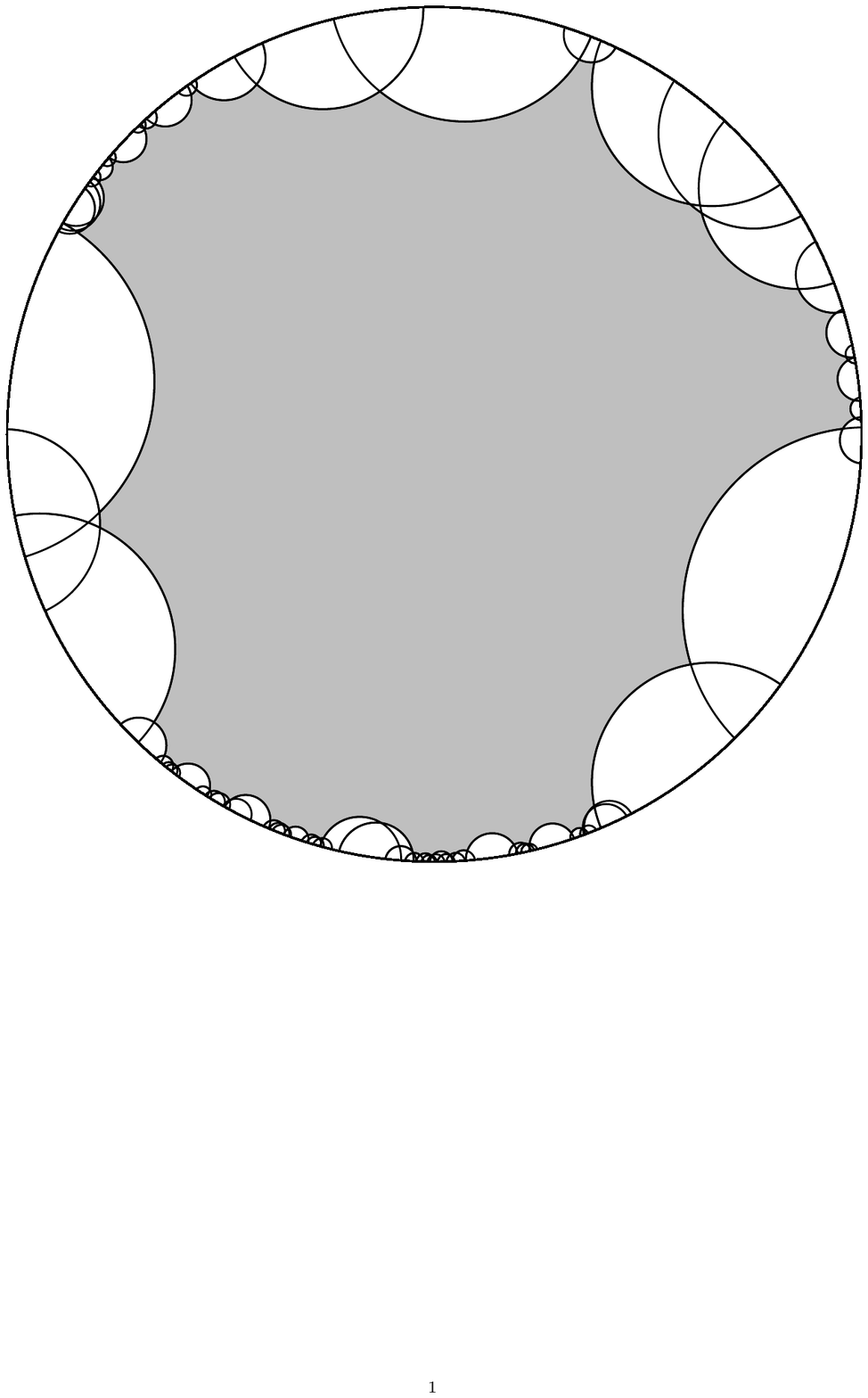} 
\end{equation}
\centering
\textbf{Figure \ref{quartic2}}: Fundamental domain for the genus $6$ manifold $X(\Gamma_2^1)$ over $F$ of discriminant $5744$
\addtocounter{equation}{1}
\end{figure}

\begin{theorem}\label{theorem:manifoldexamples}
Let $\Gamma,\Gamma'$ be torsion-free unitive Fuchsian groups such that the $2$-manifolds $X(\Gamma),X(\Gamma')$ are isospectral-nonisometric.  Then $g(X(\Gamma))=g(X(\Gamma')) \geq 6$ and this bound is achieved by the two pairs of surfaces in Example \textup{\ref{exm:manifoldexample}}, up to isomorphism.
\end{theorem}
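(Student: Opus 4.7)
The plan is to mirror the three-step search of the proof of Theorem \ref{theorem:bestorbifold}, adapted to unitive torsion-free Fuchsian groups with the area bound $\area(X(\Gamma)) \leq 20\pi$, which by Riemann--Hurwitz \eqref{riemannhurwitz} is exactly the area of a genus $6$ $2$-manifold, with Example \ref{exm:manifoldexample} providing the upper bound. Since $\Gamma^1 \leq \Gamma \leq \Gamma^+$ is unitive, Lemma \ref{lem:gamma1*} gives $[\Gamma:\Gamma^1] \leq 2^{n-1}$ in the totally real case with $s=1$. Substituting this into \eqref{volumeformula} yields the analogue of \eqref{equation:odlyzko}; combining it with the Odlyzko--Poitou bounds and the Armitage--Fr\"ohlich inequality $h_2 \geq m - \lfloor n/2 \rfloor$, exactly as in Lemmas \ref{lem:nle9}--\ref{lem:neq5}, restricts the degree $[F:\Q]$ and root discriminant $\delta_F$ to ranges enumerable via the tables of \cite{voight-table}.

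For each surviving field $F$, I would enumerate quaternion algebras $B$ over $F$ with exactly one split real place and discriminants $\frakD$ bounded as in \eqref{eqn:DNbound}, together with squarefree levels $\frakN$. For each Eichler order $\calO$ of level $\frakN$, I would check three conditions:
\begin{compactenum}
\item[(i)] $t(\calO) \geq 2$, via Proposition \ref{classnotypeno}, confirmed using the algorithms of \cite{KirschmerVoight};
\item[(ii)] no quadratic extension $L/F$ is selective, via Theorems \ref{theorem:eichlerorderselectivity} and \ref{theorem:chinburgfriedman_selectivity}, yielding isospectrality through Theorem \ref{theorem:isospectraltheorem};
\item[(iii)] the group $\Gamma^1$ is torsion-free, which by Lemma \ref{lemcycquad} reduces to verifying that no cyclotomic quadratic extension $F(\zeta_{2q})$ of $F$ embeds into $B$. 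Since $\zeta_{2q} \in F(\zeta_{2q})$ forces $\Q(\zeta_{2q})^+ \subseteq F$, the possible $q$ are bounded in terms of $n$, and embeddability is tested by Theorem \ref{theorem:abhn}.
\end{compactenum}
For each surviving triple $(F,B,\calO)$, I then identify the largest intermediate subgroup $\Gamma$ with $\Gamma^1 \leq \Gamma \leq \Gamma^+$ that remains torsion-free (minimizing area for the given order), compute $g(X(\Gamma))$, and compare with $g=6$.

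The primary obstacle is condition (iii): many of the low-discriminant examples that dominate the orbifold search of Section \ref{section:orbifoldsearch} give rise to $\Gamma^1$ with $2$- or $3$-torsion arising from embeddings of $F(\sqrt{-1})$ or $F(\sqrt{-3})$, eliminating them from consideration here. A secondary complication is that the smallest-area torsion-free unitive group for a given order need not be $\Gamma^1$: one must track the action on $\calO$ of each generator of $\Gamma^+/\Gamma^1 \cong \Z_{F,+}^\times/\Z_F^{\times 2}$ in order to identify the maximal torsion-free subgroup of $\Gamma^+$ containing $\Gamma^1$. After this enumeration, the two pairs of Example \ref{exm:manifoldexample} should emerge as the unique survivors at area $20\pi$, with no unitive torsion-free isospectral-nonisometric pair achieving smaller area.
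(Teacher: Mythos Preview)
Your proposal is correct and follows essentially the same approach as the paper: bound $[\Gamma^+:\Gamma^1] \leq 2^{m}$ via Lemma \ref{lem:gamma1*}, combine with the Odlyzko--Poitou and Armitage--Fr\"ohlich bounds to restrict $n$ and $\delta_F$, enumerate the resulting fields (extending the tables of \cite{voight-table} as needed), then for each candidate order check type number, selectivity, and torsion via Lemma \ref{lemcycquad}. The paper additionally notes two computational shortcuts you would discover along the way: fields with trivial narrow class number can be discarded immediately (since then $t(\calO)=1$), and every surviving order in the search range turns out to have $\frakN=(1)$.
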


The techniques used to prove Theorem \ref{theorem:manifoldexamples} are similar to those used in Section \ref{section:orbifoldsearch}, though a few additional complications arise due to the larger volume being considered and the fact that the quaternion algebras being considered may not contain any primitive $q$-th roots of unity for $q\geq 3$.

\subsection*{Enumerating the fields}

We begin by enumerating the totally real fields $F$ which can support a quaternion algebra $B$ and an Eichler order $\calO\subset B$ such that $\area(X^+)\leq 20\pi$.

Employing the same techniques that were used in Section \ref{section:orbifoldsearch} yields the following table (compare with (\ref{odlyzkotableyeah})).

\begin{equation} \label{manifoldodlyzkotableyeah}
\begin{array}{c|cc|c|c}
n  & \textup{Odlyzko} & \textup{Odlyzko (GRH)} & \textup{OPAF} & \Delta \\
\hline
\rule{0pt}{2.5ex} 
2 & > 2.223 & (> 2.227) & < 19.826 & 30 \\
3 & 3.610 & (3.633) & 19.341 & 300 \\
4 & 5.067 & (5.127) & 19.102 & 100 \\
5 & 6.523 & (6.644) & 17.287 & 35 \\
6 & 7.941 & (8.148) & 17.469 & 28 \\
7 & 9.301 & (9.617) & 15.423 & 15.5 \\
8 & 10.596 & (11.042) & 15.767 & 17 \\
9 & 11.823 & (12.418) & 14.917 & 15 \\
\end{array}
\end{equation}
We extend the tables of totally real fields \cite{voight-fields} up to root discriminant $\Delta$ to cover the spread in low degree.

\subsection*{Enumerating the orders, computing the type numbers and checking for torsion}

Given a totally real field $F$ satisfying the root discriminant bounds in (\ref{manifoldodlyzkotableyeah}) we compute all possible quaternion algebras $B$ over $F$ which could contain an Eichler order $\calO$ for which $\area(X^+)\leq 20\pi$. Because of the extremely large number of totally real fields $F$ which need to be considered, we first pare down our list by computing the narrow class number of each field. A large proportion of these fields have narrow class number one, and in fact it is a consequence of an extension of the Cohen--Lenstra heuristics that when ordered by absolute value of discriminant, a positive proportion of all totally real fields will have narrow class number one. The derivation of Proposition \ref{classnotypeno} shows that no isospectral-nonisometric unitive groups can arise from a quaternion algebra defined over a totally real field with trivial narrow class number. (The corresponding Eichler orders will be conjugate causing the corresponding surfaces to be isometric.)

As noted in Section \ref{section:orbifoldsearch}, $N\frakD$ and $N\frakN$ can be bounded above by factors which depend only on $F$, hence this reduces to enumerating certain square-free ideals of bounded norm. This latter enumeration can be done using standard techniques. After performing this enumeration we see that every Eichler order $\calO$ for which $\area(X^+)\leq 20\pi$ and $t(\calO)\geq 2$ satisfies $\frakN=(1)$ and is thus a maximal order. Because conjugate maximal orders will produce isometric surfaces, we eliminate all quaternion algebras $B$ of discriminant $\frakD$ which are unramified a unique real place and which have type number $1$.

As we are searching for torsion-free unitive groups we must ensure that $\Gamma^1$ is torsion-free. We do so by employing Lemma \ref{lemcycquad}; that is, we enumerate all quadratic cyclotomic extensions of $F$ and check to see if any embed into $B$ using the Albert-Brauer-Hasse-Noether theorem. As was noted above, the group $\Gamma^1$ will be torsion-free if and only if none of these cyclotomic extensions embed into $B$. 

After ruling out all unitive groups containing a subgroup $\Gamma^1$ which contains elements of finite order we see that the only groups remaining are the ones which were considered in Example \ref{exm:manifoldexample}. This concludes the proof of Theorem \ref{theorem:manifoldexamples}.

\section{Isospectral but not isometric hyperbolic 3-manifolds and orbifolds}

In this section we exhibit examples of hyperbolic $3$-orbifolds and $3$-manifolds that are isospectral but not isometric. Note that by the Mostow-Prasad Rigidity Theorem these orbifolds will have non-isomorphic fundamental groups. Our method follows in the same vein as the previous two sections, but requires greater computational effort.

\subsection*{Minimal examples}




We begin by exhibiting the minimal isospectral $3$-orbifolds and manifolds.

\begin{exm} \label{example:3orbifolds}
The best 3-orbifold example we found is as follows. Let $F=\Q(w)$ where $w^6 - 2w^5 - w^4 + 4w^3 - 4w^2 + 1=0$. Then $F$ is number field of degree $6$ with exactly $4$ real places, discriminant $-974528=-2^6\cdot 15227$, Galois group $S_6$ and ring of integers $\Z_F=\Z[w]$. The class number of $F$ is $\#\Cl(\Z_F)=1$ and the narrow class number of $F$ is $\Cl^+(\Z_F)=2$; the nontrivial class is represented by the ideal $\frakb=(-4w^5 + 6w^4 + 5w^3 - 8w^2 + 11w + 1)$ with $N\frakb=12281$.

Let $B$ be the quaternion algebra over $F$ which is unramified at all finite places of $F$ and at the four real places of $F$. Then $\frakD=(1)$ and we see that $B=\quat{-1,-1}{F}$.

Let $\calO\subset B$ be a maximal order. The class number with modulus equal to the product of the four ramified real places of $F$ is $\#\Cl^{(+)}(\Z_F)=2$, hence the type number $t(\calO)$ of $\calO$ is $2$ by Proposition \ref{classnotypeno}. Let $\calO_1$ and $\calO_2$ be representatives of the isomorphism classes of maximal orders of $B$.

We compute that $[\Gamma_i^*:\Gamma_i^1]=[\Gamma_i^+:\Gamma_i^1]=4$, hence the $3$-orbifolds $X(\Gamma_1^+)$ and $X(\Gamma_2^+)$ have volume $2.83366\ldots$. The orbifolds $X(\Gamma_1^+)$ and $X(\Gamma_2^+)$ are not isometric by Proposition \ref{prop:isometrytheorem}. In order to verify that they are isospectral we must verify that there is no quadratic $\Z_F$-order which is selective for $\calO_1$ and $\calO_2$. We do this by means of Theorem \ref{theorem:eichlerorderselectivity}. The ray class field with modulus equal to the product of the four real places of $F$ is $L=F(\sqrt{u})$ where $u=w^4 - w^2 + 2w - 1 $. The field $L$ does not embed into $B$ however, as the first real place of $F$ ramifies in $B$ yet splits in $L/F$. Theorem \ref{theorem:eichlerorderselectivity} therefore implies that no quadratic order is selective for $\calO_1$ and $\calO_2$. We conclude that $X(\Gamma_1^+)$ and $X(\Gamma_2^+)$ are isospectral-nonisometric.


We find that in each of the groups $\Gamma_1^*,\Gamma_2^*$ there are $16$ conjugacy classes of elements of order $2$ and one class each of orders $3$ and $4$.  We also find that 
\[ \Gamma_1^+/(\Gamma_1^+)^{\textup{ab}} \cong \Gamma_2^{+}/(\Gamma_2^{+})^{\textup{ab}} \cong (\Z/2\Z)^3 \]
and that the groups $\Gamma_1^*,\Gamma_2^*$ can be generated by $4$ elements: one has relations
\begin{align*}     
&\gamma_1^2 = \gamma_2^2 = (\gamma_1  \gamma_4^{-1})^2 = \gamma_3^4 = 
(\gamma_2  \gamma_3  \gamma_4^{-1})^2 = \gamma_4^{-1}  \gamma_1  \gamma_4^2  \gamma_3^{-1}  \gamma_2  \gamma_1  \gamma_4^3  \gamma_3^{-1}  \gamma_2 = 1 \\
&    \gamma_3^{-1}  \gamma_2  \gamma_1  \gamma_3^{-1}  \gamma_2  \gamma_1  \gamma_4^{-1}  \gamma_2  \gamma_4^{-1}  \gamma_2  \gamma_3 
     \gamma_1  \gamma_2  \gamma_3  \gamma_1  \gamma_2 = 
    \gamma_2  \gamma_3  \gamma_4^{-1}  \gamma_1  \gamma_4  \gamma_2  \gamma_3  \gamma_4^{-1}  \gamma_3  \gamma_1  \gamma_4^3  
    \gamma_3^{-1}  \gamma_2  \gamma_3  \gamma_1  \gamma_4^3  \gamma_3^{-1} = 1 \\
&    \gamma_2  \gamma_1  \gamma_4^{-1}  \gamma_2  \gamma_3  \gamma_1  \gamma_2  \gamma_3  \gamma_1  \gamma_3^{-1}  \gamma_2  \gamma_4 
     \gamma_1  \gamma_2  \gamma_3  \gamma_1  \gamma_3^{-1}  \gamma_2  \gamma_4  \gamma_1  \gamma_2  \gamma_3  \gamma_1 = 1 
\end{align*} 
and the other
\begin{align*}
&    \gamma_4^2 = \gamma_2^3 = (\gamma_3^{-1}  \gamma_2^{-1})^2 = (\gamma_4  \gamma_1)^2 = (\gamma_4  \gamma_3^{-1})^2 = \gamma_4  \gamma_3^{-1}  \gamma_2^{-1}  \gamma_4  \gamma_2  \gamma_3  \gamma_4  \gamma_2^{-1}  \gamma_4  \gamma_2 = 1 \\
&    (\gamma_3  \gamma_1^{-2})^4 = (\gamma_2  \gamma_1^{-3}  \gamma_3  \gamma_1  \gamma_3)^2 = \gamma_1^{-3}  \gamma_3  \gamma_1  \gamma_4  \gamma_3^{-1}  \gamma_1^3  \gamma_3^{-1}  \gamma_1^{-1}  \gamma_4  \gamma_3 = 1
\end{align*} 
but we warn the reader that there is nothing canonical about these presentations---we have merely simplified the presentation as much as we can.  

Fundamental domains for these two groups are exhibited in Figure \ref{poly1}.  We thank Aurel Page for computing the group presentations and fundamental domains for these examples using his \textsc{Magma} package \cite{Page}.
\end{exm}

\begin{figure}
\begin{equation} \label{poly1} \notag
\includegraphics[scale=0.40]{./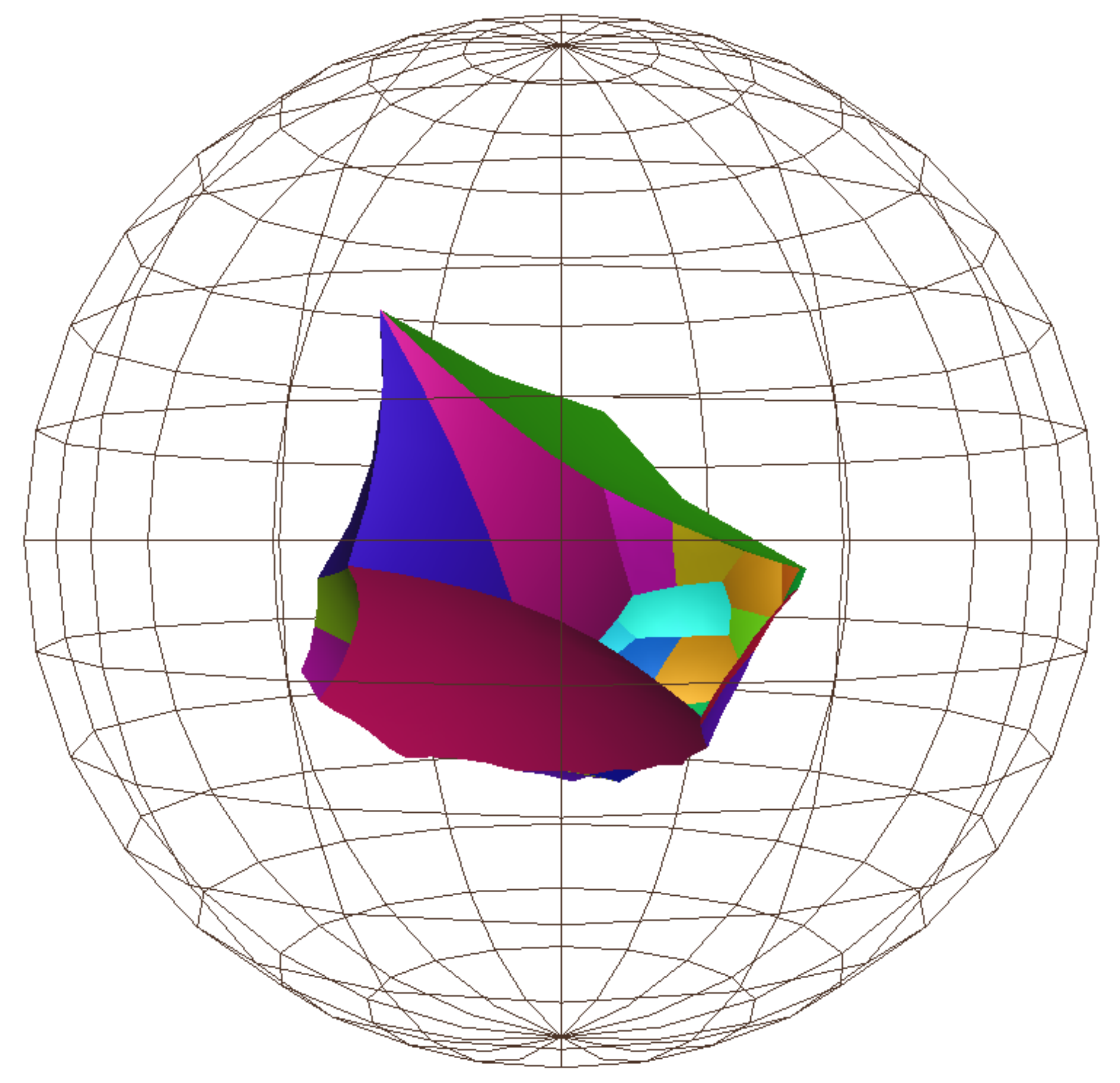}
\end{equation}
\begin{equation*} 
\includegraphics[scale=0.40]{./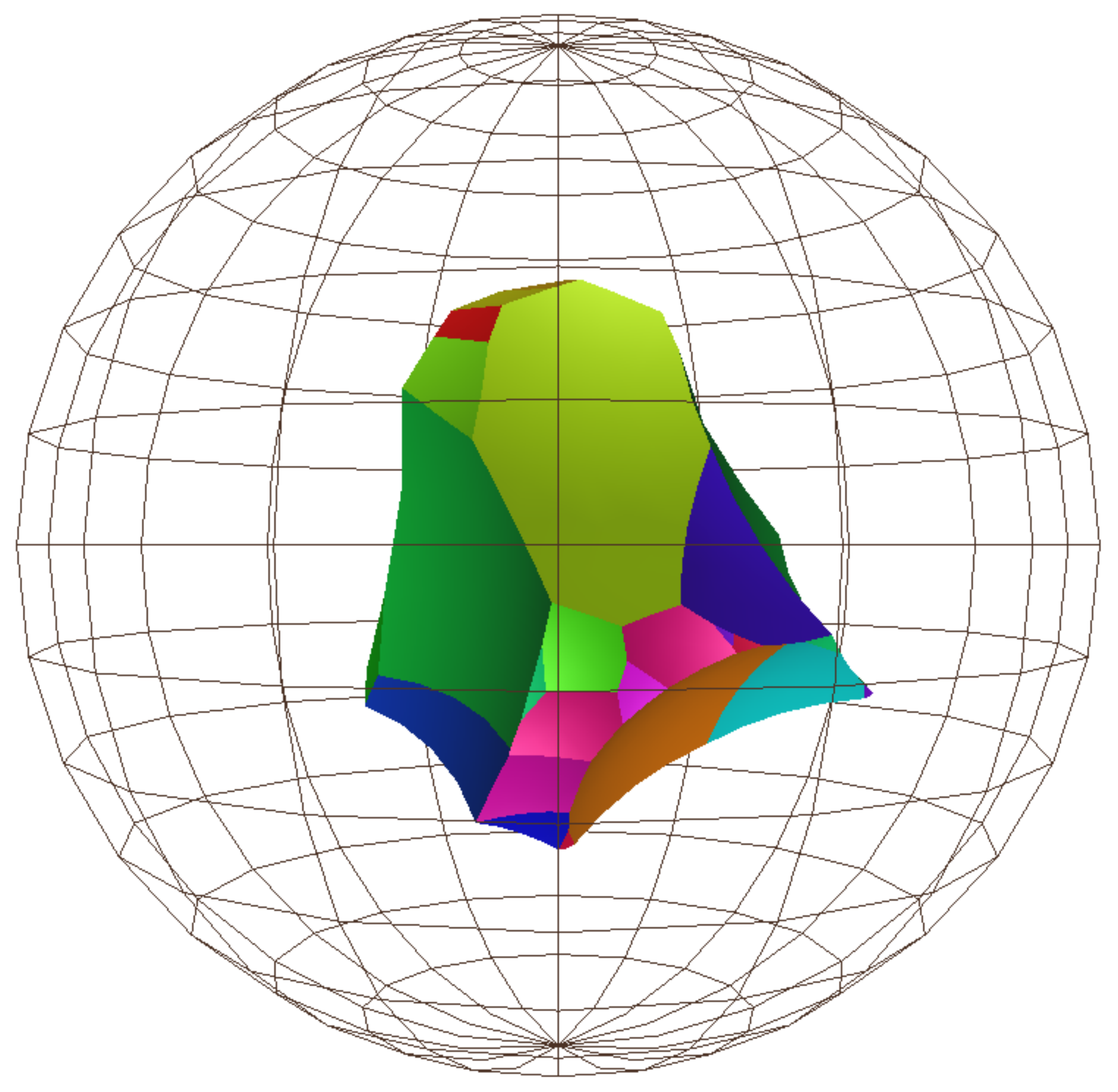} 
\end{equation*}
\centering
\textbf{Figure \ref{poly1}}: Fundamental domains for the orbifolds $X(\Gamma_1^*),X(\Gamma_2^*)$ over sextic $F$
\addtocounter{equation}{1}
\end{figure}

\begin{exm} \label{example:3manifolds}
The best 3-manifold example we found is as follows. Let $F=\Q(w)$ where $w^3 - w^2 + 3w - 4=0$. Then $F$ is a complex cubic number field with discriminant $-331$, Galois group $S_3$ and ring of integers $\Z_F=\Z[w]$. The class number of $F$ is $\#\Cl(\Z_F)=2$; the nontrivial class is represented by the ideal $\frakb=(2,w)$ with $N\frakb=2$. The narrow class number of $F$ is $\#\Cl^+(\Z_F)=2$, hence the class group and narrow class group coincide.

Let $B$ be the quaternion algebra over $F$ which is ramified at the prime $\frakp=(w^2-1)$ and the unique real place of $F$. We may take $B=\quat{-3w^2 + 6w - 10,-13}{F}$.

As in Example \ref{example:3orbifolds} we easily check that the type number of $B$ is equal to $2$. Let $\calO_1$ and $\calO_2$ be representatives of the isomorphism classes of maximal orders of $B$. We check that $[\Gamma_i^+:\Gamma_i^1]=2$, hence $X(\Gamma_1^+)$ and $X(\Gamma_2^+)$ have volume $39.2406\ldots$ and are nonisometric by Proposition \ref{prop:isometrytheorem}. We claim that $X(\Gamma_1^+)$ and $X(\Gamma_2^+)$ are $3$-manifolds. We show this via Lemma \ref{lemcycquad}.  If $F(\zeta_{2q})$ is a cyclotomic quadratic extension of $F$ then $\Q(\zeta_{2q})^+ \subseteq F$. Because $F$ is primitive, the only cyclotomic extensions of $F$ which are quadratic are $K=F(\sqrt{-1})$ and $K=F(\sqrt{-3})$. Neither of these extensions embed into $B$ however, as the prime $\frakp$ ramifies in $B$ and splits in both extensions. To verify that the manifolds $X(\Gamma_1^+)$ and $X(\Gamma_2^+)$ are isospectral we must show that there does not exist a quadratic $\Z_F$-order which is selective for the maximal orders $\calO_1,\calO_2$. This follows immediately from Theorem \ref{theorem:eichlerorderselectivity} though, as no quaternion algebra which ramifies at a finite prime can exhibit selectivity. This proves that the manifolds 
$X(\Gamma_1^+)$ and $X(\Gamma_2^+)$ are isospectral, finishing the proof of Theorem D. \end{exm}

We now prove Theorem C, which restate for convenience.

\begin{theorem}\label{theorem:best3orbifold}
The smallest volume of an isospectral-nonisometric pair of $3$-orbifolds associated to unitive Kleinian groups is $2.8340\ldots.$  This bound is attained uniquely by the pair of orbifolds in Example \ref{example:3orbifolds}, up to isomorphism.
\end{theorem}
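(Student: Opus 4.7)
The plan is to adapt the three-step strategy used in the proof of Theorem \ref{theorem:bestorbifold} to the case of dimension three, working now with a number field $F$ of signature $(r,1)$ (so $[F:\Q]=n=r+2$), a quaternion algebra $B/F$ ramified at all $r$ real places (since $s=0$), and an Eichler order $\calO\subseteq B$ of level $\frakN$. For a unitive Kleinian group $\Gamma$ with $\Gamma^1\leq\Gamma\leq\Gamma^+$, we have $\vol(X(\Gamma))\geq \vol(X^+) = \vol(X^1)/[\Gamma^+:\Gamma^1]$, and by Lemma \ref{lem:gamma1*} the index $[\Gamma^+:\Gamma^1]$ is at most $2^{r}$ (since $c=1$ and $F$ has at least one real place, unless $r=0$ in which case it is at most $2$). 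Substituting into the volume formula \eqref{volumeformula} yields
\[
V_0 = 2.8340\ldots \geq \vol(X^+) \geq \frac{2}{(4\pi^2)^r(8\pi^2)\cdot 2^{r}}\, d_F^{3/2}\zeta_F(2)\,\Phi(\frakD)\Psi(\frakN),
\]
from which, using $\zeta_F(2)\geq 1$, $\Phi(\frakD)\Psi(\frakN)\geq 1$, and rewriting in terms of the root discriminant $\delta_F=d_F^{1/n}$, we obtain an explicit upper bound on $\delta_F$.

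Next I would tighten this bound using the refined techniques of Section \ref{section:orbifoldproofs}: the Armitage--Fr\"ohlich inequality $h_2\geq m-\lfloor n/2\rfloor$ (which via the Hilbert class field forces Odlyzko-type contradictions when $h_2$ is large), the Odlyzko--Poitou bounds (exploiting small-norm primes dividing $\frakD$), and the estimate \eqref{Phi2D} on $\Phi(\frakD)/2^{\omega(\frakD)}$. Iterating these as in Lemmas \ref{lem:nle9} and \ref{lem:neq5}, and using the (unconditional) Odlyzko discriminant bounds for fields with one complex place, I would cut the admissible degree down to $n\leq N_0$ for some small $N_0$ and produce an explicit root-discriminant table analogous to \eqref{odlyzkotableyeah}. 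I would then enumerate all candidate fields $F$ of signature $(r,1)$ with $\delta_F$ below these bounds, using the existing tables of number fields of small discriminant \cite{voight-fields} (extending them where necessary in low degree).

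For each candidate $F$, I would first compute $\Cl^+(\Z_F)$ and $\Cl^{(+)}(\Z_F)$ (discarding fields with trivial relevant class group, since Proposition \ref{classnotypeno} then forces type number one and no nonisometric pair can arise), and then enumerate all quaternion algebras $B/F$ ramified at every real place and at a finite set of primes bounded in norm by \eqref{eqn:DNbound}, together with all Eichler orders $\calO\subset B$ of squarefree level $\frakN$ satisfying the same bound. For each such $\calO$, using the bijection \eqref{eqn:typeclassgroup} implemented via computational class field theory and cross-checked by the algorithms of Kirschmer--Voight \cite{KirschmerVoight}, I would compute the type number $t(\calO)$; whenever $t(\calO)\geq 2$ I would test for selectivity using Theorem \ref{theorem:eichlerorderselectivity} (together with Theorem \ref{theorem:chinburgfriedman_selectivity} when $\Gamma=\Gamma^*$). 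Any order surviving this sieve produces a representation-equivalent-nonisometric pair by Theorem \ref{theorem:isospectraltheorem}, and I would record its volume, finally verifying that the minimum is realized exactly by the pair in Example \ref{example:3orbifolds}.

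The main obstacle is the sheer size of the enumeration in step two: the volume bound $V_0\approx 2.83$ is roughly $72.57$ times the minimum orbifold volume, considerably more permissive than the ratio $80.5$ in dimension two, and here the relevant fields have a complex place so the Odlyzko constants are weaker, permitting larger degrees. This makes it essential to push the OPAF-style bookkeeping hard, and in particular to combine the sign/class-group refinement (as in the end of the proof of Lemma \ref{lem:neq5}) with the observation that selectivity cannot occur whenever $B$ ramifies at a finite place, which eliminates the majority of surviving candidates cheaply. A secondary technical point is ruling out that some \emph{larger} unitive group $\Gamma$ with $\Gamma^1<\Gamma<\Gamma^+$ (not of the form $\Gamma^+$) yields a smaller pair; this reduces, as in the proof of Proposition \ref{prop:isospectralmeanslevel}, to tracking which square classes in $\Z_{F,+}^\times/\Z_F^{\times 2}$ are realized by $\nrd(\Gamma)$ and checking the resulting intermediate-index subgroups on each candidate $(F,B,\calO)$.
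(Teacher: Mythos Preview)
Your overall plan mirrors the paper's three-step strategy and is essentially correct in spirit, but there is one concrete gap that would prevent it from going through as written.

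The tables you invoke in \cite{voight-fields} enumerate \emph{totally real} number fields of bounded root discriminant; they do not cover the fields needed here. In dimension~$3$ the base field $F$ has signature $(r,1)$, i.e.\ exactly one complex place (``almost totally real'' in the paper's terminology), so no pre-existing totally real table applies. In the paper's proof the degree is first cut to $n\leq 8$ by an OPAF-style argument close to what you sketch, and then the bulk of the effort---by far the dominant cost of the whole theorem---is a fresh Hunter-type enumeration of ATR fields in degrees $4$--$8$ up to the root-discriminant bounds in the table, adapted specifically to the ATR signature (a Rolle's-theorem interlacing trick for ATR polynomials, Pohst's Lagrange-multiplier bounds, and a sign-change continuation argument). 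The degree-$8$ run alone took on the order of $1.5$ CPU-years. Your proposal treats this as ``extending \cite{voight-fields} in low degree,'' which is exactly backwards: the high-degree ATR enumeration is the new and expensive ingredient, and without it step~(2) never begins.

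Two smaller remarks. First, the ratio $72.57$ you compute is \emph{smaller} than $80.5$, hence less permissive volumetrically, not more; what actually makes the $3$-dimensional search harder is that the Odlyzko lower bounds for fields with a complex place are weaker than for totally real fields, so the OPAF ceiling on $\delta_F$ sits higher relative to those lower bounds. Second, your worry about intermediate unitive groups $\Gamma^1<\Gamma<\Gamma^+$ is handled in the paper simply by enumerating all unitive $\Gamma$ once the finite list of $(F,B,\calO)$ is in hand; no separate reduction is needed.
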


\begin{proof}
Let $F$ be a number field of degree $n$ which possesses a unique complex place. As in \cite{Darmon-Book} we will call such a field an \defi{almost totally real (ATR)} field. Let  $B$ be a quaternion algebra over $F$ of discriminant $\frakD$ which is ramified at all real places of $F$. 

Lemma \ref{lem:gamma1*} and (\ref{volumeformula}), along with the estimates $\zeta_F(2)\geq 1$ and $\omega_2(\frakD)\geq 0$ show that if $\vol(X(\Gamma^+))\leq 2.835$ then

\begin{equation}\label{equation:initial3orbifoldvolumebound}
\delta_F \leq (11.595)(0.172)^{1/n}(1.588)^{m/n}\Phi(\frakD)^{-2/3n}
\end{equation}

We claim that the degree $n$ of $F$ satisfies $n\leq 8$. Suppose for instance that $n=9$. By combining the estimates $m\leq n$ and $\Phi(\frakD)\geq 1$ with the equation (\ref{equation:initial3orbifoldvolumebound}) we see that $\delta_F\leq 15.142$. We now apply the Odlyzko bounds to the narrow class field of $F$, a number field of degree at least $2^{m-1}hn$ (and some signature), where $h$ is the class number of $F$. In this case the Odlyzko bounds imply that a number field of degree at least $2^3n$ has root discriminant at least $15.356$. This implies that $m\leq 3$, hence $\delta_F\leq 11.125$ by (\ref{equation:initial3orbifoldvolumebound}). Repeating this argument shows that $m\leq 2$ and $\delta_F\leq 10.568$. Because the algebra $B$ is ramified at all $7$ real places of $F$, it follows that $B$ must ramify at some finite prime of $F$. The discriminant bounds of Odlyzko and Poitou show that if $F$ contains a prime of norm less than $9$ then $\delta_F\geq 10.568$, hence $B$ is ramified at some prime of norm at least $9$. This implies that $\Phi(\frakD)\geq 8$ so that another application of (\ref{equation:initial3orbifoldvolumebound}) shows that $\delta_F\leq 9.059$. This is a contradiction as the Odlyzko bounds show that $F$ must have root discriminant $\delta_F\geq 10.138$.

In this manner we also obtain upper bounds for the root discriminant $\delta_F$ in the cases in which $2\leq n \leq 8$, which we refer to as the Odlyzko-Poitou-Armitage-Fr\"ohlich (OPAF) bounds in (\ref{odlyzkotabledim3prelim}). For the convenience of the reader we also list the Odlyzko bound (both unconditional and the bound conditional on GRH) for an ATR field of degree $n$.

\begin{equation} \label{odlyzkotabledim3prelim}
\begin{array}{c||cc|c|c|c}
n & \textup{Odlyzko} & \textup{Odlyzko (GRH)} & \textup{OPAF} & \Delta & \textup{CPU time} \\
\hline
\rule{0pt}{2.5ex} 
2 & 0.468 & (1.722) & 7.637 & - \\
3 & 1.736 & (2.821) & 10.240 & - \\
4 & 3.344 & (3.963) & 10.564 & 12 & \text{2 min} \\
5 & 4.954 & (5.325) & 10.762 & 11 & \text{20 min} \\
6 & 6.438 & (6.524) & 10.897 & 11 & \text{2 days} \\
7 & 7.763 & (7.963) & 10.291 & 10.5 & \text{2 weeks} \\
8 & 8.973 & (9.270) & 10.446 & 10.5 & \text{1.5 years} \\
\end{array}
\end{equation}

ATR fields of degree $2$ are easy to enumerate (corresponding to negative fundamental discriminants); cubic ATR fields can be very efficiently enumerated by an algorithm due to Belabas \cite{Belabas}.  For degrees $n=4,5$, existing tables \cite{lmfdb} would be sufficient; but for completeness we enumerate fields in degrees $4 \leq n \leq 8$ with root discriminant $\delta \leq \Delta$ as in the above table.  

Techniques for enumerating number fields are standard, relying on the geometry of numbers (Hunter's theorem): for a description and many references, see the book by Cohen \cite[\S 9.3]{Cohen2} and further work of Kl\"uners and Malle \cite{KlunersMalle}.  We define the Minkowski norm on a number field $F$ by $T_2(\alpha)=\sum_{i=1}^n |\alpha_i|^2$ for $\alpha \in F$, where $\alpha_1,\alpha_2,\dots,\alpha_n$ are the conjugates of $\alpha$ in $\C$.  The norm $T_2$ gives $\Z_F$ the structure of a lattice of rank $n$.  In this lattice, the element $1$ is a shortest vector, and an application of the geometry of numbers to the quotient lattice $\Z_F/\Z$ yields the following result.

\begin{lem}[Hunter] 
There exists $\alpha \in \Z_F \setminus \Z$ such that $0 \leq \Tr(\alpha) \leq n/2$ and
\[ T_2(\alpha) \leq \frac{\Tr(\alpha)^2}{n} + \gamma_{n-1}\left(\frac{|d_F|}{n}\right)^{1/(n-1)} \]
where $\gamma_{n-1}$ is the $(n-1)$th Hermite constant.
\end{lem}

Therefore, if we want to enumerate all number fields $F$ of degree $n$ with $\delta_F \leq \Delta$, an application of Hunter's theorem yields $\alpha \in \Z_F \setminus \Z$ such that $T_2(\alpha) \leq C$ for some $C \in \R_{>0}$ depending only on $n,\Delta$.  We thus obtain bounds on the power sums
\[ |S_k(\alpha)|=\biggl|\sum_{i=1}^n \alpha_i^k\biggr| \leq T_k(\alpha)=\sum_{i=1}^n |\alpha_i|^k \leq nC^{k/2}, \]
and hence bounds on the coefficients $a_i \in \Z$ of the characteristic polynomial 
\[ f(x)=\prod_{i=1}^n (x-\alpha_i) = x^n + a_{n-1} x^{n-1} + \dots + a_0 \] 
of $\alpha$ by Newton's relations:
\begin{equation}
S_k+\sum_{i=1}^{k-1}a_{n-1}S_{k-i} + ka_{n-k}=0.
\label{Newton}
\end{equation}
This then yields a finite set of polynomials $f(x) \in \Z[x]$ such that every $F$ is represented as $\Q[x]/(f(x))$ for some $f(x)$.  Additional complications arise from the fact that $\alpha$ as given by Hunter's theorem may only generate a subfield $\Q\subset \Q(\alpha) \subsetneq F$ if $F$ is imprimitive.  The size of the set of resulting polynomials is $O(\Delta^{n(n+2)/4})$ (see Cohen \cite[\S 9.4]{Cohen2}).

We highlight three techniques that bring this method within range of practical computation.  The first is due to Pohst \cite{Pohst}: choosing a possible value of (nonzero) $a_0 \in \Z$ with $|a_0|^2 \leq (C/n)^n$ (using the arithmetic-geometric mean inequality), we use an efficient form of the method of Lagrange multipliers to obtain further bounds on the coefficients $a_i$.  

Second, and more significantly (especially so in larger degree), we use a modified version of the method involving Rolle's theorem \cite[\S 2.2]{voight-fields}, adapted for use with ATR fields.  Given values $a_{n-1},a_{n-2},\dots,a_{n-k}$ for the coefficients of $f(x)$ for some $k \geq 2$, we deduce bounds for $a_{n-k-1}$.  For simplicity, we explain the method for $k=n-1$; the method works more generally whenever a derivative $f^{(k)}$ is ATR.  Since $f(x)$ is ATR and separable, by Rolle's theorem it follows that there is a real root of $f'(x)$ between any two consecutive real roots of $f(x)$, so there are at least $n-3$ real roots of $f'(x)$; but the converse holds as well: if there are exactly $n-3$ real roots $\beta_1<\dots<\beta_{n-3}$ of $f'(x)$, then the $n-2$ real roots of $f(x)$ must be interlaced with the real roots of $f'(x)$.  Since $f=g(x)+a_0$ is monic, it follows that 
\[ f(\beta_{n-3}) < 0, \quad f(\beta_{n-4})=g(\beta_{n-4})+a_0 > 0, \text{etc.} \]
and knowing $g(x)$ and the roots $\beta_i$ gives bounds on $a_0$.  This simple trick (which can be implemented quite efficiently) eliminates consideration of a large range of coefficients.  

Finally, using the fact that the number of real roots is a continuous function of the coefficients and changes in a (real) one-parameter family only when the discriminant changes sign, we can quickly advance out of coefficient regions which are not ATR.

The computations were performed on a standard desktop machine except for $n=8$, where the computations were hosted graciously at the Vermont Advanced Computing Center (VACC); approximate timings are given in Table \ref{odlyzkotabledim3prelim}.

To conclude the proof of Theorem \ref{theorem:best3orbifold} we proceed exactly as in the proofs of Theorems \ref{theorem:bestorbifold} and \ref{theorem:manifoldexamples}: we enumerate all unitive Kleinian groups $\Gamma$ with $\covol(\Gamma)\leq 2.835$ (via (\ref{volumeformula}) and Lemma \ref{lem:gamma1*}) and confirm that in each case the type number of the associated quaternion algebra $B$ is equal to one or else employ Theorem \ref{theorem:eichlerorderselectivity} to exhibit a selectivity obstruction which precludes the possibility of isospectrality.
\end{proof}

\begin{rmk} \label{rmk:nonunitive3orbifold}
We briefly comment on what would be needed to extend Theorem \ref{theorem:best3orbifold} to maximal arithmetic Kleinian groups. The difficulty in carrying out such an extension is due to Proposition \ref{corgamma}: one must determine an upper bound for what is essentially $\#(\Cl \Z_F)[2]$. One approach is to observe that $\#(\Cl \Z_F)[2]$  is less than the class number $\#\Cl \Z_F$ of $F$, which can be bounded above by means of the class number formula by using a lower bound for the regulator of $F$ (for instance that of Friedman \cite{Friedman}) along with an upper bound for the residue at $s=1$ of the Dedekind zeta function of $F$ (for instance that of Louboutin \cite{Louboutin-classnumberbounds}). Even if one assumes the GRH and Dedekind's conjecture in order to take advantage of the strongest version of the aforementioned bounds as well as the Odlzyko-Poitou bounds, we still find that all ATR fields of degree $8$ and root discriminant $\delta_F\leq 11.845$ and degree $9$ and root discriminant $\delta_F\leq 11.467$ must be enumerated. Unfortunately carrying out such large enumerations is not currently feasible. See Remark \ref{rmk:nonunitive3manifold} below.

We note that one way to circumvent many of the aforementioned difficulties and obtain an unconditional result would be to obtain good bounds for the dimension over $\F_2$ of the homology group $H_1(M;\F_2)$, where $M$ is a closed, orientable hyperbolic $3$-orbifold. When $M$ is a closed, orientable, hyperbolic $3$-manifold with volume less than $3.08$, Culler and Shalen \cite{culler-shalen} have proven that $\dim_{\F_2} H_1(M;\F_2)\leq 5$; several similar results of this kind are also known \cite{CS,ACS}.  Shalen \cite{shalen} is currently working on extending this result to $3$-orbifolds and hopes to show bounds of the same type for a closed, orientable hyperbolic $3$-\emph{orbifold}.  Such a bound would allow for a short proof that there are no representation equivalent-nonisometric maximal arithmetic Kleinian groups with volume less than the bound provided in such a theorem.
\end{rmk}

\subsection*{Some remarks on Theorem D}

Although our searches were extensive only in low ranges, it would not be unreasonable to hope that the isospectral-nonisometric $3$-manifolds described in Example \ref{example:3manifolds} are in fact the smallest volume isospectral-nonisometric $3$-manifolds arising from unitive Kleinian groups.  Indeed, we have searched through all ATR fields of degree at most $7$ and absolute value of discriminant at most $10^6$ and have been unable to find any smaller volume examples. In this section we will provide root discriminant bounds on the field of definition of a pair of isospectral-nonisometric $3$-manifolds arising from arithmetic unitive Kleinian groups having volume less than $39.241$. Due to the difficulty of actually enumerating these ATR fields however, we are unable to prove that our example is in fact the best possible.

As for $3$-orbifolds, we let $F$ be an ATR field of degree $n$ and $B$ be a quaternion algebra defined over $F$ of discriminant $\frakD$ which ramifies at all real places of $F$. If $\Gamma^+$ arises from $B$ and satisfies $\vol(X(\Gamma^+))\leq 39.241$ then the same estimates used in the derivation of (\ref{equation:initial3orbifoldvolumebound}) show
\begin{equation}\label{equation:3manifoldequation}
\delta_F \leq (11.595)(0.996)^{1/n}(1.588)^{m/n}.
\end{equation}
Using the same ideas that were used in the proof of Theorem \ref{theorem:best3orbifold} to show that the defining field had degree at most $8$, we easily see that in our current situation we have $n\leq 10$. When $2\leq n \leq 10$ we have the following root discriminant bounds for $F$:

\begin{equation} \label{odlyzkotabledim3manifold}
\begin{array}{c||cc|cc|c}
n & \textup{Odlyzko} & \textup{Odlyzko (GRH)} & \textup{OPAF} \\
\hline
\rule{0pt}{2.5ex} 
2 & 0.468 & (1.722) & 18.376 \\
3 & 1.736 & (2.821) & 13.510 \\
4 & 3.344 & (3.963) & 14.597 \\
5 & 4.954 & (5.325) & 15.291 \\
6 & 6.438 & (6.524) & 15.772 \\
7 & 7.763 & (7.963) & 15.094 \\
8 & 8.973 & (9.270) & 13.784 \\
9 & 10.138 & (10.529) & 13.522 \\
10 & 11.258 & (11.664) & 13.316 \\
\end{array}
\end{equation}

\begin{rmk} \label{rmk:nonunitive3manifold}
A Hunter search for number fields of degree $n$ with root discriminant at most $\Delta$ currently require checking $O(\Delta^{n(n+2)/4})$ polynomials, and it would be very interesting to find a method in moderate degree $n > 6$ which is an improvement on this upper bound (work of Ellenberg-Venkatesh \cite{EV} gives some theoretical evidence that this may be possible)---the many existing tricks seem only to cut away at the constant.  (Note that this bound is sharp for $n=2$.)  The slight increase in the bounds in Table \ref{odlyzkotabledim3manifold} over those in Table \ref{odlyzkotabledim3prelim} are in fact quite prohibitive to compute in practice: for example, in degree $8$ we expect to have to work $(13.784/10.5)^{20} \approx 231$ as hard, and for degrees $9,10$ we expect truly significant efforts will be required.  We hope instead that recent techniques in the understanding of $3$-manifolds can be brought to bear on this problem.
\end{rmk}

\end{document}